%% Template for the submission to:
%%   The Annals of Statistics [AOS]
%%
%%%%%%%%%%%%%%%%%%%%%%%%%%%%%%%%%%%%%%%%%%%%%%
%% In this template, the places where you   %%
%% need to fill in your information are     %%
%% indicated by '???'.                      %%
%%                                          %%
%% Please do not use \input{...} to include %%
%% other tex files. Submit your LaTeX       %%
%% manuscript as one .tex document.         %%
%%%%%%%%%%%%%%%%%%%%%%%%%%%%%%%%%%%%%%%%%%%%%%

\documentclass[aos]{imsart}

%% Packages
\RequirePackage{amsthm,amsmath,amsfonts,amssymb}
\RequirePackage[numbers]{natbib}
\RequirePackage[colorlinks,citecolor=blue,urlcolor=blue]{hyperref}%% uncomment this for coloring bibliography citations and linked URLs
\RequirePackage{graphicx}%% uncomment this for including figures

\startlocaldefs

\usepackage{sven-draft}

\usepackage{mdframed}

%%%%%%%%%%%%%%%%%%%%%%%%%%%%%%%%%%%%%%%%%%%%%%
%%                                          %%
%% Uncomment next line to change            %%
%% the type of equation numbering           %%
%%                                          %%
%%%%%%%%%%%%%%%%%%%%%%%%%%%%%%%%%%%%%%%%%%%%%%
%\numberwithin{equation}{section}
%%%%%%%%%%%%%%%%%%%%%%%%%%%%%%%%%%%%%%%%%%%%%%
%%                                          %%
%% For Axiom, Claim, Corollary, Hypothesis, %%
%% Lemma, Theorem, Proposition              %%
%% use \theoremstyle{plain}                 %%
%%                                          %%
%%%%%%%%%%%%%%%%%%%%%%%%%%%%%%%%%%%%%%%%%%%%%%
%\theoremstyle{plain}
%\newtheorem{???}{???}
%\newtheorem*{???}{???}
%\newtheorem{???}{???}[???]
%\newtheorem{???}[???]{???}
%%%%%%%%%%%%%%%%%%%%%%%%%%%%%%%%%%%%%%%%%%%%%%
%%                                          %%
%% For Assumption, Definition, Example,     %%
%% Notation, Property, Remark, Fact         %%
%% use \theoremstyle{definition}            %%
%%                                          %%
%%%%%%%%%%%%%%%%%%%%%%%%%%%%%%%%%%%%%%%%%%%%%%
%\theoremstyle{definition}
%\newtheorem{???}{???}
%\newtheorem*{???}{???}
%\newtheorem{???}{???}[???]
%\newtheorem{???}[???]{???}
%%%%%%%%%%%%%%%%%%%%%%%%%%%%%%%%%%%%%%%%%%%%%%
%% Please put your definitions here:        %%
%%%%%%%%%%%%%%%%%%%%%%%%%%%%%%%%%%%%%%%%%%%%%%

\endlocaldefs

\begin{document}

\begin{frontmatter}
%%%%%%%%%%%%%%%%%%%%%%%%%%%%%%%%%%%%%%%%%%%%%%
%%                                          %%
%% Enter the title of your article here     %%
%%                                          %%
%%%%%%%%%%%%%%%%%%%%%%%%%%%%%%%%%%%%%%%%%%%%%%
%\title{On global polynomial-time estimation in non-linear statistical inverse problems}
\title{Global polynomial-time estimation in statistical nonlinear inverse problems via generalized stability}
%\title{A sample article title with some additional note\thanksref{T1}}
\runtitle{Polynomial-time non-linear estimators}
%\thankstext{T1}{A sample of additional note to the title.}

%%%%%%%%%%%%%%%%%%%%%%%%%%%%%%%%%%%%%%%%%%%%%%%
%% Only one address is permitted per author. %%
%% Only division, organization and e-mail is %%
%% included in the address.                  %%
%% Additional information (such as           %%
%% indicating the corresponding author) can  %%
%% be included in the Acknowledgments        %%
%% section if necessary.                     %%
%% ORCID can be inserted by command:         %%
%% \orcid{0000-0000-0000-0000}               %%
%%%%%%%%%%%%%%%%%%%%%%%%%%%%%%%%%%%%%%%%%%%%%%%
\begin{aug}
\author{\fnms{Sven}~\snm{Wang}}
\address{\'Ecole Polytechnique F\'ed\'erale de Lausanne (EPFL)\\
Institute of Mathematics\\
sven.wang@epfl.ch}
\end{aug}

\begin{abstract}
Non-linear statistical inverse problems pose major challenges both for statistical analysis and computation.  Likelihood-based estimators typically lead to non-convex and possibly multimodal optimization landscapes, and Markov chain Monte Carlo (MCMC) methods may mix exponentially slowly. We propose a class of computationally tractable estimators---plug-in and PDE-penalized M-estimators---for inverse problems defined through operator equations of the form $L_f u = g$, where $f$ is the unknown parameter and $u$ is the observed solution. 
The key idea is to replace the exact PDE constraint by a weakly enforced relaxation, yielding conditionally convex and, in many PDE examples, nested quadratic optimization problems that avoid evaluating the forward map $G(f)$ and do not require PDE solvers. For prototypical non-linear inverse problems arising from elliptic PDEs, including the Darcy flow model $L_f u = \nabla\!\cdot(f\nabla u)$ and a steady-state Schr\"odinger model, we prove that these estimators attain the best currently known statistical convergence rates while being globally computable in polynomial time. In the Darcy model, we obtain an explicit sub-quadratic $o(N^2)$ arithmetic runtime bound for estimating $f$ from $N$ noisy samples.
Our analysis is based on new generalized stability estimates, extending classical stability beyond the range of the forward operator, combined with tools from nonparametric M-estimation.
We also derive adaptive rates for the Darcy problem, providing a blueprint for designing provably polynomial-time statistical algorithms for a broad class of non-linear inverse problems. Our estimators also provide principled warm-start initializations for polynomial-time Bayesian computation.
\end{abstract}

\begin{keyword}[class=MSC2020]
\kwd[Primary]{~62G05}
\kwd{62G20}
\kwd[; secondary]{~35R30}
\kwd{68Q25}
\kwd{62C20}
\end{keyword}

\begin{keyword}
\kwd{non-linear statistical inverse problems}
\kwd{PDE inverse problems}
\kwd{nonparametric M-estimation}
\kwd{generalized stability estimates}
\kwd{polynomial-time algorithms}
\kwd{convex relaxation}
\kwd{Darcy flow}
\kwd{Schr\"odinger equation}
\end{keyword}

\end{frontmatter}
%%%%%%%%%%%%%%%%%%%%%%%%%%%%%%%%%%%%%%%%%%%%%%
%% Please use \tableofcontents for articles %%
%% with 50 pages and more                   %%
%%%%%%%%%%%%%%%%%%%%%%%%%%%%%%%%%%%%%%%%%%%%%%
\tableofcontents

%%%%%%%%%%%%%%%%%%%%%%%%%%%%%%%%%%%%%%%%%%%%%%
%%%% Main text entry area:

\section{Main ideas and overview}\label{sec:intro}

Non-linear inverse problems have recently drawn significant research interest in modern statistics, applied and pure mathematics, and scientific computing \cite{S10,nickl2023bayesian,paternain2023geometric,BB18,kaipio2005statistical,I06}.
%Non-linear inverse problems can be notoriously challenging to solve both statistically and computationally.
They arise whenever an unknown statistical parameter $f\in \mathcal F$ can only be observed \textit{indirectly} through a nonlinear forward map
\[
\mathcal G: \mathcal F \to \mathcal H,
\]
where \(\mathcal F\) denotes a high- or infinite-dimensional parameter space, and \(\mathcal H\) is the space where observations can be obtained, subject to measurement noise. Typical examples arise in medical imaging, geophysical fluid dynamics, and weather and climate modelling, where \(f\) encodes an unknown parameter of the physical system. In such cases, the observation space is a Hilbert space \(\mathcal H\subseteq L^2(\mathcal O)\) consisting of functions on a spatial or spatio-temporal domain \(\mathcal O\subseteq \mathbb R^d\). In this paper, we consider statistical non-linear inverse problems where the data consist of noisy measurements
\begin{equation}\label{eq:obs}
Y_i = \mathcal G(f_0)(X_i) + \varepsilon_i,\qquad i=1,\dots,N,
\end{equation}
where \(f_0\in\mathcal F\) is some unknown ground truth data-generating parameter, \(X_i\in\mathcal O\) are design points, $\varepsilon_i\in \R$ are independent (e.g.~Gaussian) noise variables, and $N\ge 1$ denotes statistical sample size.

In the key class of inverse problems studied in this paper, \(\mathcal G\) is described by the non-linear parameter-to-solution map of a partial differential equation (PDE) which describes the observed system. The parameter $f$ plays the role of a coefficient function within the PDE, and $\mathcal G(f)\in L^2(\mathcal O)$ denotes the observable solution \cite{S10,nickl2023bayesian,kaipio2005statistical,I06}---we will introduce several concrete examples below. The main inferential task is to recover \(f_0\) from the data
\[Z^{(N)}=(Y_i,X_i)_{i=1}^N\] given by (\ref{eq:obs}). A secondary goal is given by the estimation of \(\mathcal G(f_0)\), which constitutes a \emph{PDE-constrained regression problem} over the (typically nonlinear) manifold of functions defined by the range of $\mathcal G$, $\big\{ \mathcal G(f): f\in\mathcal F \big\} \subseteq L^2(\mathcal O)$. The relevance of the preceding framework has led to significant methodological advances in applied mathematics and statistics---notably, this includes widely used maximum-likelihood and regularization-based estimators \cite{HNS95,KNS08} as well as Bayesian inversion techniques  \cite{S10, nickl2023bayesian}.

Devising statistical reconstruction methods which satisfy rigorous algorithmic guarantees has proven to be challenging. In practice, maximum likelihood and Bayes methods are reliant on the minimization of a (penalized) non-convex log-likelihood functional---for Gaussian noise equal to
\begin{equation}\label{likelihood}
    -\ell_N(f) = \frac 1{2N} \sum_{i=1}^N [Y_i-\mathcal G(f)(X_i) ]^2,~~~~~f\in\mathcal F,
\end{equation}
or on sampling from the high-dimensional non-log-concave Bayesian posterior distribution $d\Pi(f|Z^{(N)}) \propto \exp(\ell_N(f))d\Pi(f)$ using MCMC algorithms \cite{CRSW13,robert1999monte,MT09}. The difficulty in these tasks lies in the non-convexity of the negative log-likelihood \eqref{likelihood}, which is in turn due to the non-linearity of $\mathcal G$. This has rendered classical theory of convex optimisation \cite{BV04} and log-concave sampling \cite{D17, DM18} inapplicable, whence the computation of the above estimators could a priori be exponentially or `NP'-hard \cite{BMNW23}. While there has been a significant progress in establishing statistical consistency results and convergence rates \cite{NVW18, MNP21,AW24,GN19} as well as Bernstein-von Mises type theorems \cite{N20,MNP21-AOS}, the literature on \textit{algorithmic} or \textit{computational} guarantees---concerned with the question of how to numerically access statistical estimators---
has remained relatively limited \cite{NW22,BN24,nickl2023bayesian,HSV14} and relies on strong assumptions (as will be discussed further below).

%have been developed by combining analytical PDE tools with seminal ideas in nonparametric statistics \cite{V00,GGV00}; see the recent monograph \cite{nickl2023bayesian} for an overview.

The central guiding problem of this paper is to develop methods which both possess provable \emph{statistical guarantees} and \emph{global polynomial-time computability}, and can be summarized as follows. The notion of computational cost considered here is based on the number of elementary arithmetic operations (such as additions and multiplications of real numbers), and will be formalized below.

%Let us formulate this question concretely:

\begin{mdframed}
\textit{Existence problem for polynomial-time estimation methods.} Does there exist a sequence of estimators, i.e. measurable functions of the data $\hat f_N= \hat f_N(Z^{(N)})$, which
\begin{itemize}
\item[(i)] have a polynomial statistical convergence rate,
  \[ \forall N\ge 1:~~ \mathbb E_{f_0}\|\hat f_N-f_0\|^2 \le CN^{-\gamma}~~~\text{for some}~~~C>0,\gamma >0,~~~\text{and}
  \]
\item[(ii)] have a computational cost $c_N>0$ growing at most polynomially as $N\to\infty$.
\end{itemize}
\end{mdframed}

Our main contribution is to propose and develop theory for two statistical procedures which we respectively call `PDE-penalized M-estimators' and `plug-in M-estimators', which extend classical M-estimation, are computationally tractable and do not require access to PDE solvers. These methods are based on enforcing PDE constraints \textit{weakly}. As a key prototypical non-linear inverse problem, we consider the \textit{Darcy flow} model, which has served as a main testbed for new theory and methodology in applied mathematics and statistics \cite{S10,CRSW13,nickl2023bayesian,S10, nickl2023bayesian, AW24, GN19,NVW18, CDS11,FNO}. To the best of our knowledge, no provably polynomial-time statistical inversion method is currently available; thus our approach yields novel polynomial-time feasibility guarantees (see Theorem \ref{darcy-main} below) for the Darcy model, with explicit \emph{sub-quadratic} $o(N^2)$ arithmetic runtime guarantees. A second example is given by a non-linear inverse problem for the Schr\"odinger equation, e.g.~relevant in photoacoustic tomography \cite{BU10,N20,NW22}. 

A conceptual novelty of this paper is to introduce a new notion of \emph{generalized stability} which extend classical stability stability beyond the range of the forward operator. These new estimates are key in our proofs and yield a unified approach to proving both (i)  statistical convergence rates and (ii) global polynomial-time computability for the proposed generalized M-estimators. A further consequence is that our polynomial-time estimators provide principled warm-start initializations for high-dimensional Bayesian computation, thereby enabling polynomial-time algorithms for approximating posterior expectations and for posterior sampling in the models considered; see Section~\ref{sec-intro-warmstart}.

%\section{Main ideas and results}

\subsection{Generalized M-estimators} \label{sec-motivation}

Before giving an overview of the main results, let us explain the intuition underlying the proposed estimation procedures, which we shall refer to as \textit{generalized M-estimators}. The estimators can be motivated using two very simple ideas for modifying the non-convex (possibly intractable) likelihood landscape \eqref{likelihood}. These ideas consist of (i) the  \textit{augmentation of the parameter} and (ii) a certain \textit{weak formulation} of the PDE constraint defining the map $\mathcal G$. We will consider forward maps $\mathcal G$ which are defined implicitly by an operator equation
\begin{equation}\label{PDE}
L_f[u] = g~~~+~~~\text{boundary conditions},
\end{equation}
where $g$ is some known source term and $(L_f:f\in\mathcal F)$ represents a family of (differential) operators. Suppose that for each $f\in\mathcal F$, $\mathcal G(f)$ is given as the unique solution $u$ of \eqref{PDE}. This setting includes many prototypical parameter inference problems for PDEs---such as the Darcy flow problem where $L_fu= \nabla \cdot (f\nabla u)$ and the steady-state Schr\"odinger model where $L_fu= \Delta u/2 -fu$, as well as several other non-linear PDE examples such as the McKean-Vlasov and reaction-diffusion equations discussed further below.
%In general, we assume that $\mathcal F\subseteq \mathbb H$ is some (possibly non-linear) 
%For instance, in Darcy flow, where $\mathcal L_f[u]=\nabla\cdot(f\nabla u)$, if $g\in C^\infty(\mathcal)$ and $f$ possesses regularity $C^{1+\eta}(\mathcal O)$ and is strictly positive, the existence of a unique solution $u=\mathcal G(f)$ in the space $C^{2+\eta}(\mathcal O)$ is ensured by classical PDE theory \cite{GT98} (Chapter 6).
%In typical non-linear inverse problems of this type, both the parameter set $\mathcal F$ as well as the solution set $
%\{\mathcal G(f):f\in\mathcal F\}$ may be non-linear and non-convex subsets embedded in some ambient function space.

%\paragraph*{Augmentation of parameter}

In place of considering the likelihood landscape $f\mapsto \ell_N(f)$, we instead decouple the regression function $u$ (observed solution of the PDE) from the statistical parameter $f$, and to link the pair $(u,f)$ via an additional term in the objective.
%A first such approach would lead to the for some set $\mathcal U$ is some set of candidate regression functions, 
%augment the parameter space from $f\in\mathcal F$ to the pair $(f,u)\in \mathcal F\times \mathcal U$, where . Intuitively speaking, this step decouples the loss incurred by the regression $u \in \mathcal U$ from an additional loss which measures the discrepancy between $u$ and the physics `predicted by $f\in\mathcal F$'.
A first (naive) application of this principle would result in a minimization problem
\begin{equation}\label{relax-step1}
 \text{minimize}~~ J(f,u) = \frac 1N \sum_{i=1}^N [Y_i-u(X_i) ]^2
    + \lambda^2 \|u - \mathcal G(f)\|_{L^2(\mathcal O)}^2 + \mathcal R(u,f).
\end{equation}
Here and below, $\lambda >0$ is some regularisation parameter and $\mathcal R$ represents some (e.g.~Tikhonov-type) complexity penalty. The $L^2(\mathcal O)$-norm was chosen here in resemblance of the information-theoretic Kullback-Leibler divergence $KL(f_1,f_2)\approx \frac 12\|\mathcal G(f_1)-\mathcal G(f_2) \|_{L^2}^2$ (with equality holding for uniformly sampled $X_i$ and $\eps_i\sim N(0,1)$). The decoupling permits to further modify the `PDE penalty' $\|u - \mathcal G(f)\|_{L^2}^2$, which operates on the level of the regression functions, to $\tnorm{L_fu-g}$ for some suitable norm $\tnorm{\cdot}$.%, leading to the first method we will study.
\begin{mdframed}
    \textit{Method I: PDE-penalized M-estimator.} 
    \begin{equation}\label{relax-step2}
      \text{minimize}~~ J(f,u) =\frac 12 \sum_{i=1}^n (Y_i -u(X_i))^2 + \lambda^2 \tnorm{L_f u - g}^2 + \mathcal R(u,f).
\end{equation}
\end{mdframed}
The idea of minimizing a PDE penalty to circumvent potentially costly evaluations  of PDE solvers has appeared in key methodologies, notably in physics-informed neural networks (PINNs) \cite{PINN}, so-called `all-at-once' formulations of inverse problems \cite{kaltenbacher2016regularization}, optimal control \cite{troltzsch2010optimal} and PDE-constrained regression \cite{HPUU08} and `calming' techniques in nonparametric statistics \cite{S23}, see below for more discussion.
%For example, in the Darcy flow example one requires non-negativity constraint $\inf_{x\in\mathcal O}f(x) \ge f_{min}$ which ensure the uniform ellipticity of the differential operators $L_f[\cdot]$. This is typically enforced using a non-linear reparameterization of the parameter $f$; see for instance Section \red{XXX} below. In contrast, the objective \eqref{relax-step2} is well-defined for \textit{any $f$}, even when $L_f[\cdot]$ violates uniform ellipticity (and thus $L_f^{-1}$ may not be well-defined), allowing to formulate \eqref{relax-step2} as an unconstrained minimization problem.
%The problem \eqref{relax-step2} can thus be thought of as an unconstrained minimization over pairs $(f,u)\in \mathbb F\times \mathcal U$, where both $\mathbb F$ and $\mathcal U$ are linear spaces.

The second estimation procedure studied in this paper is given by yet another simplification of \eqref{relax-step2}, which we term \textit{plug-in M-estimation}. Here one further decomposes the minimization task into two nested problems:
\begin{mdframed}
\textit{Method II: Plug-in M-estimator.}
    \begin{equation}\label{plugin-1}
\begin{split}
        \hat u_N \in \argmin_{u\in \mathcal U}J_1(u),&~~~J_1(u) := \frac 12 \sum_{i=1}^n (Y_i -u(X_i))^2 + \mathcal R_1(u),\\
        \hat f_N \in \argmin_{f\in \mathcal F}J_2(f),&~~~J_2(f) := \lambda^2 \tnorm{L_f \hat u_N - g}^2 + \mathcal R_2(f).
\end{split}
\end{equation}
\end{mdframed}
One can, of course, think of the the plug-in M-estimator entirely independently of the above motivation. In fact, the `plug-in principle' of first performing a pure regression or `data smoothing' step then an inversion step has appeared in the literature, see the paper \cite{bissantz2007convergence} for linear inverse problems as well as the paper \cite{KSV24} discussed futher below. The key point is that the non-linear inversion itself is formulated as a computationally tractable \textit{minimization of the PDE-constraint}. In light of this, our motivation provided above gives a unified way of thinking about Methods I and II as \textit{generalized M-estimators}. We also note that the preliminary estimator $\hat u_N$ of $\mathcal G(f_0)$ need not necessarily arise from a penalized least squares method, but could arise from other nonparametric regression methods.

The formulations \eqref{relax-step2} and \eqref{plugin-1} are computationally attractive from several points of view. The first obvious reason is that the minimizations of \eqref{relax-step2}-\eqref{plugin-1} no longer require evaluations of $\mathcal G(f)$ (which likelihood-based methods require), circumventing possibly expensive access to PDE solvers. Secondly, in many key examples such as the Darcy and Schr\"odinger models, $(f,u)\mapsto L_fu$ is bi-linear, whence the objective \eqref{relax-step2} is \textit{conditionally quadratic}. The plug-in method \eqref{plugin-1} becomes a nested quadratic problem. The third reason relates to the range of functions $f$ over which optimization is permitted. Often the map $\mathcal G(f)$---which involves the \textit{inversion} of the differential operator $L_f$---is only well-defined on a non-linearly constrained subset $\mathcal F$ (e.g.,~non-negative functions). In contrast, the differential operator $L_fu$ is typically well-defined for \textit{arbitrary} sufficiently smooth functions $f$, which permits formulating \eqref{relax-step2} as an unconstrained minimization problem. This will be  convenient in the non-linear problems considered here, see Sections \ref{sec-intro-polytime} and \ref{sec:pde-results}.

%\todo{In fact, when $(f,u)\mapsto L_fu$ is bilinear, one may view this as \textit{a single iteration} of a coordinate-wise optimization scheme for \eqref{relax-step2}, first optimizing with respect to $u$ (conditional on $f=0$) and then with respect to $f$, conditional on $\hat u_N$.}

\subsection{A global $o(N^2)$-runtime theorem for the Darcy model}\label{sec-intro-polytime}

We illustrate the scope of our contributions in the Darcy model. For any sufficiently smooth and strictly positive \textit{conductivity function} $f:\mathcal O\to(f_{min},\infty)$, $f_{min}>0$, the forward map $\mathcal G(f)$ is defined as the solution $u=\mathcal G(f)$ to the boundary value problem
\begin{equation}\label{Darcy}
    \nabla\cdot(f\nabla u) = g \quad \text{on }\mathcal O, \qquad u|_{\partial \mathcal O}=0,
\end{equation}
where \(g\) denotes a fixed, known source term. The goal lies in identifying $f$ from noisy data of $u$. For convenience, we assume throughout that $g$ is infinitely differentiable. Then, for any $f\in C^{1+\eta}(\mathcal O)$ with $\eta>0$, there exists a unique classical solution $u\in C^{2+\eta}(\mathcal O)\cap C(\bar{\mathcal O})$, see Theorem 6.14 in \cite{GT98}. The Darcy flow problem has served as a prototypical inverse problem and testbed for MCMC methodology \cite{CRSW13} and theory \cite{nickl2023bayesian}, Bayesian inversion \cite{S10, nickl2023bayesian, AW24, GN19}, nonparametric M-estimation \cite{NVW18}, `parametric PDEs' \cite{CDS11} and scientific machine learning \cite{FNO, PINO, reinhardt2024statistical}, as well as classical theory for inverse problems with deterministic noise \cite{KNS08}. To the best of our knowledge, no polynomial-time statistically consistent estimator is currently known (both in the statistical and deterministic setting).

%This paper provides affirmative answers to the above question for several concrete PDE-based inverse problems and, more broadly, develops a general blueprint for analyzing nonlinear inverse problems.
The following theorem asserts that the inverse problem can be solved in \textit{sub-quadratic} $o(N^2)$ runtime while retaining the same statistical convergence rate as previously achieved by maximum likelihood and Bayes methods \cite{NVW18,GN19}. To formally state the theorem, let \(\mathcal O\subseteq \mathbb R^d (d\ge 1) \) be a smooth and bounded domain and for integer $\alpha\ge 1$, let $H^\alpha(\mathcal O)$ denote the usual $L^2$-Sobolev space of $\alpha$-times weakly differentiable functions on $\mathcal O$. We assume that the regression measurements $Z^{(N)}=(Y_i,X_i)_{i=1}^N$ arise from (\ref{eq:obs}) with 
\[ Y_i= \mathcal G(f_0)+\eps_i,~~\text{with}~~X_i\sim^{i.i.d.} \text{Unif}(\mathcal O)~~\text{and}~~\eps_i\sim^{i.i.d.} N(0,1),\]
where $\text{Unif}(\mathcal O)$ denotes the uniform distribution on $\mathcal O$.

\begin{thm}\label{darcy-main}
Fix integer $\alpha > (d/2 +1)$ and constants $f_{min},R>0$ and consider the Darcy flow model \eqref{Darcy} with some known, smooth source $g:\bar {\mathcal O}\to (0,\infty)$. Then there exist some constants $C>0$, $\gamma \in (0,2)$ and a sequence of statistical estimators $(\hat f_N:N\ge 1)$ such that $\hat f_N$ is the output of an algorithm with deterministic runtime $O(N^\gamma)$ and satisfies the statistical convergence rate
\[\mathbb E_{f_0}^N\big[\|\hat f_N-f_0\|_{L^2}^2 \big] \le C N^{-\frac{2\alpha-2}{2\alpha+2+d}},\]
for all $f_0\in H^\alpha(\mathcal O)$ with $\inf_{x\in\mathcal O} f_0(x)\ge f_{min}$ and $\|f_0\|_{H^\alpha}\le R$, and where $\mathbb E_{f_0}^N$ denotes the expectation under the law of $Z^{(N)}$.
\end{thm}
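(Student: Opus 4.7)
The plan is to apply the plug-in M-estimator (Method II) from \eqref{plugin-1}, exploiting that $L_f u=\nabla\cdot(f\nabla u)$ is \emph{linear} in $f$ for fixed $u$, and to convert a nonparametric regression rate on the preliminary smoother $\hat u_N$ into an $L^2$-rate on $\hat f_N$ via a generalized stability estimate. For Step~1, I would compute $\hat u_N$ from penalized least squares over $\mathcal U=\mathrm{span}\{\phi_k:k\le K_N\}$, where the $\phi_k$ form a tensor-product wavelet or spline basis respecting the Dirichlet condition, with regularizer $\mathcal R_1(u)=\mu_N^2\|u\|_{H^{\alpha+1}}^2$. Since $g$ is smooth, elliptic regularity for \eqref{Darcy} gives $u_0=\mathcal G(f_0)\in H^{\alpha+1}(\mathcal O)$ uniformly over the admissible class, and classical projection-estimator bounds then yield
\begin{equation*}
\mathbb E_{f_0}^N\|\hat u_N - u_0\|_{H^2}^2 \;\lesssim\; N^{-(2\alpha-2)/(2\alpha+2+d)}
\end{equation*}
with $K_N\asymp N^{d/(2\alpha+2+d)}$ and the standard choice of $\mu_N$. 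This is a convex quadratic program, computable by a single $K_N\times K_N$ normal-equation solve at arithmetic cost $O(NK_N^2+K_N^3)$.

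For Step~2, take $\tnorm{\cdot}=\|\cdot\|_{L^2}$, $\mathcal R_2(f)=\nu_N^2\|f\|_{H^\alpha}^2$, and expand $f=\sum_{j\le M_N}c_j\psi_j$ in an analogous basis; the objective
\begin{equation*}
c\mapsto \lambda^2\|\nabla\cdot(f\nabla\hat u_N)-g\|_{L^2}^2 + \nu_N^2\|f\|_{H^\alpha}^2
\end{equation*}
is an unconstrained strictly convex quadratic, whose global minimizer is produced by a single $M_N\times M_N$ linear solve at cost $O(M_N^3+M_N^2 K_N)$. Choosing $M_N\asymp K_N\asymp N^{d/(2\alpha+2+d)}$, the total deterministic arithmetic cost is $O(N^\gamma)$ with $\gamma=1+2d/(2\alpha+2+d)<2$, where sub-quadraticity is precisely the condition $d<2\alpha+2$ granted by the assumption $\alpha>d/2+1$.

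The decisive analytic ingredient, carried out in Step~3, is a generalized stability estimate of the form
\begin{equation*}
\|f-f_0\|_{L^2} \;\lesssim\; \|L_f\hat u_N - g\|_{L^2} + C\,\|\hat u_N - u_0\|_{H^2},
\end{equation*}
valid for admissible $f$ once $\|\hat u_N-u_0\|_{H^2}$ is small. The splitting $L_f\hat u_N - g = \nabla\cdot((f-f_0)\nabla\hat u_N) + \nabla\cdot(f_0\nabla(\hat u_N-u_0))$ reduces the task to coercivity of the linear map $h\mapsto\nabla\cdot(h\nabla\hat u_N)$ in suitable Sobolev scales: the case $\hat u_N=u_0$ follows from classical Darcy stability under the positivity assumption on $g$, and extension to a neighbourhood of $u_0$ is by a quantitative perturbation argument. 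Combined with the basic inequality for $\hat f_N$ against the competitor $f_0$---which bounds $\|L_{\hat f_N}\hat u_N-g\|_{L^2}$ by $\|L_{f_0}\hat u_N-g\|_{L^2}=\|\nabla\cdot(f_0\nabla(\hat u_N-u_0))\|_{L^2}\lesssim \|\hat u_N-u_0\|_{H^2}$, modulo penalty terms---this transfers the Step~1 rate into the claimed $L^2$-rate on $\hat f_N$.

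The main obstacle is the generalized stability estimate itself: classical Darcy stability presupposes that the argument of $\mathcal G^{-1}$ lies exactly in the range of $\mathcal G$, whereas here $\hat u_N$ is only a generic smooth approximation of $u_0$. Obtaining coercivity of $h\mapsto\nabla\cdot(h\nabla\hat u_N)$ uniformly on an $H^2$-neighbourhood of $u_0$---with uniform treatment of regions where $\nabla u_0$ may be small---together with a coordinated tuning of the parameters $\mu_N$, $\nu_N$, $\lambda$, constitutes the technical heart of the argument; everything else then reduces to standard nonparametric M-estimation bookkeeping and linear algebra.
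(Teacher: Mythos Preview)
Your proposal is correct and matches the paper's approach almost exactly: the plug-in M-estimator with wavelet discretization, the choice $K_N\asymp N^{d/(2(\alpha+1)+d)}$, the computational cost $\gamma=1+2d/(2(\alpha+1)+d)<2$, and the transfer of the $H^2$-rate on $\hat u_N$ to an $L^2$-rate on $\hat f_N$ via a generalized stability estimate combined with the basic inequality are all as in the paper.

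The one genuine difference is in how you frame the generalized stability. You propose to obtain coercivity of $h\mapsto\nabla\cdot(h\nabla\hat u_N)$ by perturbation from the classical case $\hat u_N=u_0$, contingent on $\|\hat u_N-u_0\|_{H^2}$ being small. The paper instead proves an \emph{asymmetric} stability lemma: all structural hypotheses (ellipticity, Dirichlet boundary condition, positivity of $g$, $C^2$-regularity) are placed on the reference triplet $(u_0,f_0,g)$, while $(\hat u_N,\hat f_N,L_{\hat f_N}\hat u_N)$ is allowed to be arbitrary up to basic regularity. The coercivity is established once and for all for $h\mapsto\nabla\cdot(h\nabla u_0)$ via a multiplier identity with test function $h e^{-\lambda u_0}$ and the Hopf boundary lemma, and the difference $\hat u_N-u_0$ is absorbed directly into the right-hand side. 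This buys two things: no smallness event is needed for the stability estimate itself (only moment bounds on $\|\hat f_N\|_{C^1}$, obtained from the penalty via Sobolev embedding), and one avoids the delicate point that your perturbation would really require closeness of $\hat u_N$ to $u_0$ in a norm stronger than $H^2$---the pointwise lower bound on $\tfrac12\Delta u+\lambda|\nabla u|^2$ underlying coercivity is a $C^2$-type statement. Your route can be made to work by interpolating between the $L^2$-rate and the uniform $H^{\alpha+1}$-bound on $\hat u_N$ to obtain $C^2$-closeness, but the paper's asymmetric formulation is cleaner and yields the MSE bound without passing through a high-probability event.
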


Theorem \ref{darcy-main} follows from Theorem \ref{thm-darcy-polytime} below, where an explicit construction of $\hat f_N$ is given. The estimator arises as a high-dimensional wavelet discretization of the plug-in M-estimator from \eqref{plugin-1} with carefully chosen (quadratic) Sobolev penalties. It will be shown later that a modification of the procedure can also achieve adaptive convergence rates, see Theorem \ref{thm-adaptive}. Compared to previous computational guarantees for high-dimensional MCMC in the Darcy model (cf.~Theorem 5.3.6 in \cite{nickl2023bayesian}), our guarantees here do not rely on any form of warm-start initialisation, and thus are `global'. Moreover, the run-time guarantees given here do \textit{not} rely on high-probability arguments to assert that the likelihood is `probably log-concave', such that we are able to significantly relax regularity and dimensionality assumptions under which polynomial-time statistically consistent estimators can be devised. Previous results required $\alpha\ge 22$ and $d=\{2,3\}$, while Theorem \ref{darcy-main} is valid as long as $\alpha >d/2+1$--see Section \ref{sec-discussion} for further discussion. The method is illustrated in Figure \ref{fig:plugin-intro}.

%The estimation procedure which achieves the bounds in Theorem \ref{darcy-main} is a novel class of estimators which we call \textit{plug-in M-estimators},

%The estimators arise as \emph{plug-in \(M\)-estimators}, obtained through a convex relaxation of the nonlinear least-squares criterion. They can be viewed as computationally tractable surrogates for the likelihood- and posterior-based procedures discussed below.

\subsection{Generalized stability estimates}\label{sec-generalized-stability-motivation}

The majority of existing proof strategies for statistical convergence rates in non-linear inverse problems \cite{NVW18,MNP21,nickl2023bayesian, vollmer2013posterior} works via showing a consistency result in `forward risk' $\|\mathcal G(\hat f_N)- \mathcal G(f_0)\|_{L^2}$ equivalent to the statistical Hellinger distance $h(f_1,f_2)$ \cite{GN19}, and then utilizing \textit{stability estimates} (for Darcy flow, see for instance \cite{R81,itokunisch,BCDPW17}) of the form
\begin{equation}\label{classical-stability}
    \|f-f_0\|_{L^2}\lesssim \tnorm{\mathcal G(f)-\mathcal G(f_0)}, 
\end{equation}
for a suitable norm $\tnorm{\cdot}$, to prove convergence of $\hat f_N$ towards $f_0$. However, a fundamental limitation for this proof strategy seems that the estimator for $\mathcal G(f_0)$ is required to take values within the nonlinear range $\{\mathcal G(f):f\in\mathcal F \}$ of $\mathcal G$, which may be difficult to obtain in polynomial-time. A key ingredient to proving the statistical convergence guarantees for generalized M-estimators are so-called \textit{generalized stability estimates} which we introduce---stability properties of the defining equation $L_fu=g$ rather than the operator $\mathcal G$, which can be formulated even for $f\notin \mathcal F$ and $u\notin \{\mathcal G(f):f\in\mathcal F \}$ lying, respectively, outside of the domain and range of $\mathcal G$. In a simplified form, these stability estimates entail that for any sufficiently smooth pair of functions $(f,u)$,
\[ \|f-f_0\|_{L^2}\lesssim  \tnorm{u-\mathcal G(f_0)}_1+  \tnorm{L_fu -g}_2, \]
for suitable norms $\tnorm{\cdot}_1,\tnorm{\cdot}_2$. It turns out that such estimates can be obtained both in the Darcy flow model (Proposition \ref{darcy-generalized-stability}) and the Schr\"odinger model (Proposition \ref{schroed-generalized-stability}) via relatively simple extensions of existing PDE proof techniques.  Note that the second term vanishes whenever the PDE constraint $L_fu=g$ is exactly fulfilled, thus recovering the classical form \eqref{classical-stability} of stability of $\mathcal G$.

%In the case of Darcy flow, the generalized stability estimates will be shown in  Proposition \ref{darcy-generalized-stability} that for any sufficiently smooth pair of functions $(f,u)$, it holds that 
%\[ \|f-f_0\|_{L^2}\lesssim  \|u-\mathcal G(f_0)\|_{H^2} + \| \nabla \cdot (f\nabla u) -g \|_{L^2}. \]
%Both for Darcy flow and the Schr\"odinger problem, these estimates can be achieved via simple extensions of PDE arguments already present in the literature. 

 %These generalized stability properties are not specific to the Darcy problem, but can also be shown in the Schr\"odinger equation (see Proposition \ref{schroed-generalized-stability}); and one can expect them to hold in a larger class of non-linear PDE inverse problems, which we however deferred to future work.

\begin{figure}
    \centering
    \includegraphics[width=1\linewidth]{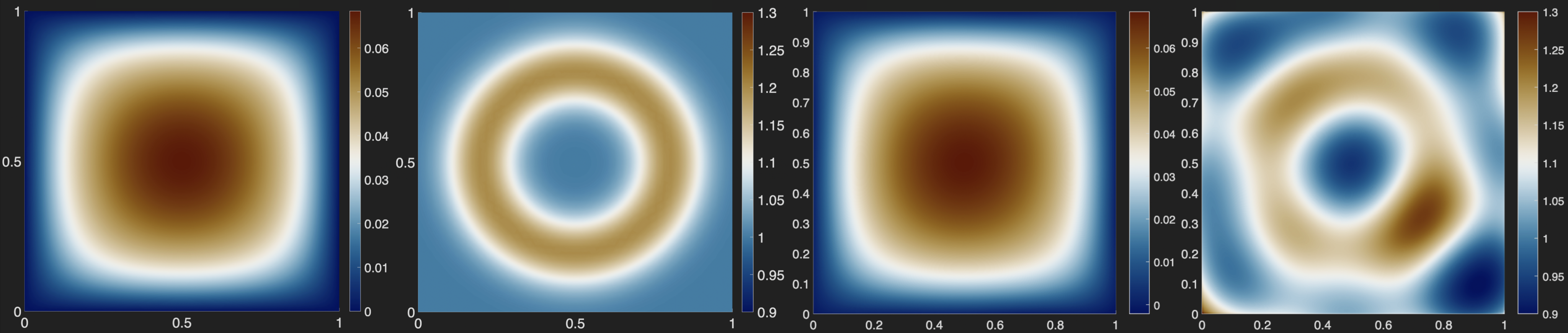}
    %\caption{Numerical illustration for the proposed plug-in estimation procedure in \textsc{MATLAB} based on $N=3000$ samples. Left two panels: Ground truth solution $\mathcal G(f_0)$ and diffusivity $f_0$. Right two panels: preliminary estimator $\hat u_N$ (here computed via GP regression) and plug-in estimator $\hat f_N$. Noise level was set to $0.0005$ to match the scale of the signal $\mathcal G(f_0)$. Runtimes are well below 1 minute.}

\caption{Numerical illustration of the proposed plug-in estimation procedure for the Darcy flow inverse problem. 
Left to right: ground truth solution $u_0=\mathcal G(f_0)$ and coefficient $f_0$, preliminary estimator $\hat u_N$ obtained by a smoothing regression step, and plug-in estimator $\hat f_N$, based on $N=3000$ noisy measurements.}
    
    \label{fig:plugin-intro}
\end{figure}

\subsection{Warm-start initialisation for MCMC}\label{sec-intro-warmstart}

Our theory has implications for the polynomial-time initialisation of Markov Chain Monte Carlo algorithms \cite{CRSW13} for sampling from the high-dimensional posterior distribution 
\[d\Pi(f|Z^{(N)}) \propto \exp(\ell_N(f))d\Pi(f)\]
in `Bayesian inversion' which has been widely studied in recent years \cite{S10,nickl2023bayesian}. In a line of work \cite{NW22, altmeyer2022polynomial, BN24} emanating from ideas developed in \cite{NW22}, a strategy for providing polynomial-time mixing guarantees for Langevin-type MCMC algorithms was given. In particular, it was proven that in models such as the Darcy flow and the Schr\"odinger model, the log-likelihood (and thus log-posterior) surface can be expected to be locally log-concave on some region $\mathcal I$ around the ground truth $f_0$ (see Figure \ref{posterior-landscape}). Thus, when the posterior distribution concentrates around $f_0$ at a sufficiently fast rate, most of its mass will be contained a sub-region $\mathcal C\subseteq I$---such conditions can, in many models, be verified via nonparametric posterior contraction techniques \cite{GV07,NW22}. In contrast, away from the ground truth $f_0$, log-concavity \textit{cannot} in general be expected to hold (Figure \ref{posterior-landscape}).

\begin{figure}[h]\label{posterior-landscape}
\centering
\begin{tikzpicture}[
  x=1cm,y=1cm,
  every node/.style={font=\footnotesize},
  line/.style={line width=1.1pt},
  dashedline/.style={line width=0.8pt, dash pattern=on 3pt off 2pt}
]

% --- pastel colors ---
\definecolor{pastelgreen}{RGB}{153,213,148} % contraction
\definecolor{pastelred}{RGB}{252,141,98}    % log-concave / warm start
\definecolor{pastelviolet}{RGB}{204,173,226}% potential multimodality

% --- geometry (x-coordinates shifted by +0.70) ---
\def\xshift{0.70}
\def\ybase{0.80}      % baseline
\def\yshade{1.60}     % TOP of shaded bands
\def\xa{5.20+\xshift}         % left  outer (warm-start) boundary (now 5.90)
\def\xb{5.90+\xshift}         % left  inner (contraction) boundary (now 6.60)
\def\xc{7.40+\xshift}         % right inner (contraction) boundary (now 8.10)
\def\xd{7.80+\xshift}         % right outer (warm-start) boundary (now 8.50)
\def\xstart{1.00+\xshift}     % Leftmost x-coordinate for shading/baseline (now 1.70)
\def\xend{13.70+\xshift}      % Rightmost x-coordinate for shading/baseline (now 14.40)

% ===== Shaded regions (no outlines) =====
\fill[fill=pastelgreen, opacity=0.55, draw=none] (\xb,\ybase) rectangle (\xc,\yshade); % R1 (C)
\fill[fill=pastelred,   opacity=0.40, draw=none] (\xa,\ybase) rectangle (\xb,\yshade); % R2 (I-L)
\fill[fill=pastelred,   opacity=0.40, draw=none] (\xc,\ybase) rectangle (\xd,\yshade); % R2 (I-R)
\fill[fill=pastelviolet,opacity=0.35, draw=none] (1.00,\ybase) rectangle (\xa,\yshade);% R3 (M-L) - start at 1.00 for visual space
\fill[fill=pastelviolet,opacity=0.35, draw=none] (\xd,\ybase) rectangle (13.70,\yshade);% R3 (M-R) - end at 13.70 for visual space

% baseline
\draw[line] (1,\ybase) -- (13.7,\ybase);

% === Likelihood curve: lifted without changing shape (x-coords shifted) ===
\begin{scope}[yshift=0.35cm]  % add a bit of vertical breathing room
  \draw[line]
    plot[smooth, tension=0.6] coordinates {
    (0.9+\xshift,3.4)  
      (1.6+\xshift,3.2)  
      (2.7+\xshift,3.3)
      (3.6+\xshift,3.2)
      (4.7+\xshift,2.72)  (5.4+\xshift,2.1)  (6.1+\xshift,1.57)
      (6.6+\xshift,1.4)  (7.0+\xshift,1.44) (7.4+\xshift,1.65)
      (8.0+\xshift,2.2)  (8.7+\xshift,2.6)  (9.4+\xshift,2.7)
      (10.2+\xshift,2.5) (11.0+\xshift,2.9) (11.9+\xshift,3.3) (12.5+\xshift,3.45)
    };
\end{scope}

% y-label
\node[anchor=east,fill=white,inner sep=1pt] at (13.4+\xshift,2.8) {$-\log d\Pi(f|Z^{(N)})$}; % Shifted label

% subtle vertical guides (tops nudged up to match curve lift, x-coords shifted)
\draw[dashedline] (\xa,\ybase) -- (\xa,2.65);
\draw[dashedline] (\xb,\ybase) -- (\xb,2.05);
\draw[dashedline] (\xc,\ybase) -- (\xc,2.02);
\draw[dashedline] (\xd,\ybase) -- (\xd,2.4);

% ===== Zone labels on bands (computed mid-height) =====
\pgfmathsetmacro{\ymid}{(\ybase+\yshade)/2}
% The coordinates here already use the shifted \xa, \xb, \xc, \xd
\node at ({(\xa+1.00)/2},\ymid) {$\mathcal M$}; % Need to explicitly shift start
\node at ({(\xa+\xb)/2},\ymid) {$\mathcal I$ };
\node at ({(\xb+\xc)/2},\ymid) {$\mathcal C$};
\node at ({(\xc+\xd)/2},\ymid) {$\mathcal I$};
\node at ({(\xd+13.70)/2},\ymid) {$\mathcal M$}; % Need to explicitly shift end

% ticks and labels on baseline
\pgfmathsetmacro{\fc}{(\xb+\xc)/2} % Center of C
\draw[line] (\fc,\ybase) -- (\fc,\ybase-0.1);
\node at (\fc,0.45) {$f_0$};

% bottom-right text
\node[anchor=east,fill=white,inner sep=1pt] at (13.8,.45)
  {$\mathcal{F}\simeq \mathbb{R}^D\ (D \text{ large})$};

% ===== Minimal legend (boxless, small) =====

% ===== Horizontal legend ABOVE the curve (shifted to center) =====
\begin{scope}[yshift=-.5cm] 
\node[
  anchor=south,
  font=\scriptsize,
  align=center,
  draw=black!30,        % subtle grey border
  fill=white,
  line width=0.3pt,
  rounded corners=1pt,
  inner sep=4pt,
  text depth=0pt,
  text height=1.5ex
] at (7.35,4.55) {      % Centered at 7.35 (midpoint of 1.00 to 13.70)
  \tikz{\fill[pastelgreen,opacity=0.7,draw=none]
      (0,0) rectangle (0.35,0.15);} $\mathcal C$: posterior contraction \quad
  \tikz{\fill[pastelred,opacity=0.7,draw=none]
      (0,0) rectangle (0.35,0.15);} $\mathcal I$: warm-start region (log-concave) \quad
  \tikz{\fill[pastelviolet,opacity=0.7,draw=none]
      (0,0) rectangle (0.35,0.15);} $\mathcal M$: potential multimodality
};
\end{scope}

% === Parabolic approximation (x-coords shifted) ===
% Center of parabola was 6.8, now 6.8 + 0.70 = 7.5
%\draw[gray!50, line width=0.8pt, dash pattern=on 4pt off 3pt, domain=4.0:11.03, samples=100] % Adjusted domain
%  plot (\x, {0.23*(\x-7.5)^2 + 1.7});

%\node[anchor=east,fill=white,inner sep=1pt,text=gray!90] at (10.95+\xshift,4.1) {$\tilde J(f)$}; % Shifted label
\end{tikzpicture}
\caption{Schematic representation of the log-posterior landscape. Often (e.g.~in the Darcy flow and Schr\"odinger models) the negative log-posterior is convex with high probability \cite{NW22, nickl2023bayesian} within some region $\mathcal I$ containing $f_0$, while it may be multimodal in $\mathcal M$. Cold-start MCMC algorithms may thus suffer from computational difficulties arising from multimodality or `free energy barriers' \cite{BMNW23}.} \end{figure}
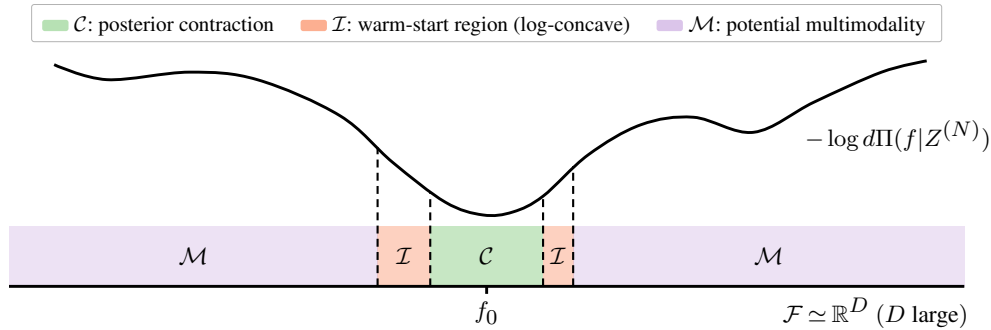

In such settings, one can decompose the posterior computation problem into a (i) \textit{global initialisation problem} consisting of the task of finding a starting point for the MCMC chain within $\mathcal I$ and (ii) the \textit{a local computation} problem after successful `warm-start' initialisation. In the Darcy problem, the initialisation problem (i) had been open so far, while the local computation problem (ii) has been addressed in \cite{nickl2023bayesian}---albeit under strong regularity assumptions, see Section \ref{sec-darcy-MCMC} for details. In fact, it is shown in \cite{BMNW23} that upon cold-start initialisation, high-dimensional non-log-concave sampling can be exponentially slow due to so-called `free energy barriers' in related regression models. For Darcy flow, when $f_0\in H^\alpha(\mathcal O)$ is sufficiently smooth and strictly positive, it was shown in \cite{nickl2023bayesian} (see Section 5.3 of that reference) that the posterior distribution is, with high probability, log-concave in a region of radius $D^{-8/d}$ (up to log-factors). Here, one employs a $D$-dimensional discretized representation $\R^D\ni \theta\mapsto f_\theta$ of the parameter $f\in\mathcal F$, with growing dimension $D\lesssim N^{\frac{d}{2\alpha+d}}$ as $N\to\infty$. Thus, roughly speaking, one can formulate the initialisation task for Darcy flow as finding a suffiently good estimator $\vartheta_{init}\in \R^D$ in polynomial-time such that
\begin{equation*}
    \begin{split}P_{f_0}^N(\vartheta_{init}\in\mathcal I)\xrightarrow{N\to\infty} 1,~~\text{with}~~
        %\|\vartheta_{init} - \theta_{0,D}\|_{\R^D}&\le (N^{\frac{8}{2\alpha+d}}\log N)^{-1},\\
        \mathcal I:=& \big\{\theta: \|\theta-\theta_{0,D}\|_{\R^D} \le N^{-\frac{8}{2\alpha+d}}/\log N \big\},
    \end{split}
\end{equation*}
where $\theta_{0,D}$ denotes the truncated series representation of $f_0$. We refer to Section \ref{sec-darcy-MCMC} for details. Note that the size of the region $\mathcal I$ is shrinking as $N,D$ tend to infinity.

\begin{thm}[Posterior computation for Darcy flow, informal]
Consider the Darcy problem for $d\le 3$, with $f_0 \ge f_{min}>0$ and $f_0\in H^\alpha(\mathcal O)$ for $\alpha$ sufficiently large. Then, a polynomial-time initialiser $\vartheta_{init}$ into the region $\mathcal I(N,D)$ exists. Moreover, for any polynomial precision level $\eps\ge N^{-P}$, with probability tending to $1$, the posterior mean $\bar \theta_N:=\int \theta d\Pi(\theta|Z^{(N)})$ arising from a Gaussian Matern-type series prior on $\theta \in\R^D$ (detailed in Section \ref{sec-darcy-MCMC}) can be computed up to precision $\eps$ using polynomially many $O(N^\gamma),~\gamma>0$, iterations of a high-dimensional MCMC algorithm initialized at $ \vartheta_{init}$.
\end{thm}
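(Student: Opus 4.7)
The plan is to combine the polynomial-time plug-in estimator $\hat f_N$ provided by Theorem \ref{darcy-main} with the local log-concavity theory for the Darcy posterior developed in Section 5.3 of \cite{nickl2023bayesian}. The former supplies the warm-start, and the latter converts the warm-start into polynomial-time MCMC guarantees.

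First I would construct the initialiser by setting $\vartheta_{init}\in \R^D$ to be the vector of coefficients of $\hat f_N$ with respect to the orthonormal basis underlying the Matern-type series prior (taking $\hat f_N$ discretised on that basis at resolution $D\asymp N^{d/(2\alpha+d)}$). Theorem \ref{darcy-main} yields $\mathbb E_{f_0}^N\|\hat f_N - f_0\|_{L^2}^2 \le CN^{-(2\alpha-2)/(2\alpha+2+d)}$, and a Markov inequality combined with Parseval's identity gives
\[ \|\vartheta_{init}-\theta_{0,D}\|_{\R^D} \le \|\hat f_N - f_0\|_{L^2}\lesssim N^{-(\alpha-1)/(2\alpha+2+d)}\sqrt{\log N} \]
with $P_{f_0}^N$-probability tending to one. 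A direct comparison of exponents shows that for $d\le 3$ and $\alpha$ exceeding an explicit threshold, the right-hand side is dominated by $N^{-8/(2\alpha+d)}/\log N$, so that $\vartheta_{init}\in \mathcal I(N,D)$ with probability tending to one.

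Conditional on the warm-start event $\{\vartheta_{init}\in\mathcal I\}$, I would invoke the local strong log-concavity of the negative log-posterior on $\mathcal I$ established in Theorem 5.3.6 of \cite{nickl2023bayesian}, which holds on a further event of probability tending to one and features a condition number at most polynomial in $N$. Standard convergence theory for preconditioned Langevin or MALA-type MCMC on strongly log-concave targets \cite{D17,DM18} then yields mixing in Wasserstein-2 distance at a rate polynomial in $N$ and $\log(1/\eps)$. Since the posterior concentrates on $\mathcal C \subset \mathcal I$ at a polynomial rate by standard nonparametric contraction arguments \cite{GN19}, the error from restricting the chain to $\mathcal I$ is of smaller order than any $\eps\ge N^{-P}$, and the ergodic average $\hat\theta_N^{\mathrm{MCMC}}$ approximates the posterior mean $\bar\theta_N$ to precision $\eps$ after $O(N^\gamma)$ iterations. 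Each iteration requires only gradient evaluations of the log-likelihood on the $D$-dimensional basis representation, which is polynomial in $N$.

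The main obstacle, and the genuinely new ingredient, is the exponent matching in the first step: the warm-start accuracy produced by the polynomial-time plug-in estimator must fit inside the \emph{shrinking} log-concave region of radius $N^{-8/(2\alpha+d)}/\log N$. This is what dictates the lower bound on the smoothness index $\alpha$ in the hypotheses; a slower polynomial rate would generally fail to enter $\mathcal I$ as $N\to\infty$, so it is precisely the near-minimax convergence rate of Theorem \ref{darcy-main} that makes the scheme viable. Once this matching is established, the MCMC mixing and mean-approximation guarantees follow from existing local-mixing technology for high-dimensional log-concave distributions.
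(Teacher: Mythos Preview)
Your overall strategy matches the paper's: use the polynomial-time plug-in estimator from Theorem~\ref{darcy-main} to land inside the shrinking log-concave region $\mathcal I$, then invoke the local MCMC machinery from \cite[Section~5.3]{nickl2023bayesian}. The exponent-matching observation is also correct and is exactly what drives the smoothness threshold.

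However, there is a genuine gap in the construction of $\vartheta_{init}$. In the Bayesian setup of Section~\ref{sec-darcy-MCMC}, the prior is placed on $\theta\in\R^D$ via the parameterization $f_\theta=\Phi\circ F_\theta$ with $F_\theta=\sum_k\theta_k e_k$, where $\Phi:\R\to(f_{\min},\infty)$ is a smooth link function. Consequently, $\theta_{0,D}$ consists of the eigenbasis coefficients of $F_0=\Phi^{-1}\circ f_0$, \emph{not} of $f_0$ itself. Taking $\vartheta_{init}$ to be the coefficients of $\hat f_N$ directly, as you propose, does not give the Parseval bound $\|\vartheta_{init}-\theta_{0,D}\|_{\R^D}\le\|\hat f_N-f_0\|_{L^2}$; the right-hand side would instead need to be $\|\Phi^{-1}\circ\hat f_N-\Phi^{-1}\circ f_0\|_{L^2}$.

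To repair this, one must first show that $\Phi^{-1}$ can be applied to $\hat f_N$ in a controlled way. This requires $\inf_x\hat f_N(x)>f_{\min}$ with probability tending to one, which in turn requires $\|\hat f_N-f_0\|_\infty\to 0$---an $L^2$ bound alone does not suffice. The paper obtains this via the Sobolev interpolation $\|\hat f_N-f_0\|_\infty\lesssim\|\hat f_N-f_0\|_{L^2}^{1/\alpha}\|\hat f_N-f_0\|_{H^\alpha}^{(\alpha-1)/\alpha}$ together with the $H^\alpha$-boundedness built into the plug-in estimator. Once $\hat f_N$ is shown to stay uniformly above $f_{\min}+\rho/2$, one sets $\hat F_N:=\Phi^{-1}\circ\hat f_N$, uses that $\Phi^{-1}$ is Lipschitz on $[f_{\min}+\rho/2,\infty)$ to transfer the $L^2$ rate, and only then takes eigenbasis coefficients. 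Without this intermediate step your initialiser need not lie in $\mathcal I$.
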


Note that the computation of the posterior mean $\bar \theta_N\in \R^D$ is a non-trivial task, since it amounts to computing a $D$-dimensional integral with respect to the non-log concave posterior distribution. In Section \ref{sec-darcy-MCMC}, we will explain in detail the MCMC algorithm which can be used for posterior computation, which is a modified version of the `Unadjusted Langevin Algorithm' (ULA), a construction which follows \cite{NW22, nickl2023bayesian}. The result is then proved by combining our theory with local computation results developed in \cite{NW22, nickl2023bayesian}. One can similarly also give polynomial-time guarantees for the computation of Maximum a Posteriori (MAP) estimators, but we refrain from doing so here.

\subsection{Discussion and comparison with related literature}\label{sec-discussion}
\subsubsection{Statistical convergence guarantees}

There have been several lines of work investigating the statistical convergence properties of regularization-based and Bayesian inference procedures in non-linear inverse problems. The papers \cite{NVW18} and \cite{MNP21} gave general recipes for deriving convergence rates for MAP estimators as well as posterior contraction rates with Gaussian priors, see also \cite{AW24, GN19, vollmer2013posterior} for related results. Another line of work has investigated the nonparametric \textit{Bernstein-von Mises} phenomenon, aiming to establish asymptotic normality of the posterior distribution; we refer to \cite{N20,MNP21-AOS} as well as \cite{nickl2023bayesian} for more references. While such results have been well-understood in the Schr\"odinger equation model, in the Darcy model fundamental difficulties arise relating to the range properties of the `information operator' $D\mathcal G_{f_0}^* D\mathcal G_{f_0}$ \cite{nickl2023bayesian}. The best known convergence rates derived in the above-mentioned works, for the Darcy model, are given by
\begin{equation*}
    \begin{split}
        \E [\|\hat f_N - f_0\|_{L^2}^2]  &\lesssim N^{-\frac{2(\alpha-1)}{2(\alpha +1) +d}}~~~~~~~\text{(inverse problem)},\\
        \E [\|\mathcal G(\hat f_N)- \mathcal G(f_0)\|_{L^2}^2] &\lesssim N^{-\frac{2 (\alpha + 1 )}{2(\alpha + 1) +d}}~~~~~~~\text{(forward problem)},
    \end{split}
\end{equation*}
where $\alpha$ represents the smoothness of $f_0$. For the forward problem, the rate can be shown to be minimax-optimal \cite{NVW18}. These rates are matched by all the methods proposed in the current paper---see the Theorems \ref{darcy-main} and \ref{Darcy-rate}, \ref{thm-darcy-polytime} below. For the Schr\"odinger model, our methods achieve minimax-optimal convergence rates both for the forward and the inverse problem, see Theorem \ref{Schroed-rate} below.

The paper \cite{KSV24} pursues a related plug-in principle in a Bayesian framework: it first derives posterior contraction for forward quantities (in particular $u$ and its derivatives) and then recovers $f$ via an explicit inversion step based on classical numerical ideas \cite{R81,richter1981numerical}, which in practice amounts to solving a transport PDE. Their development largely focuses on $d=2$ and hinges on structural assumptions such as $\mathcal G(f_0)$ lying in an iterated eigenspace of the prior covariance operator. In contrast, our approach avoids explicit inversion via transport/characteristics: we formulate the recovery of $f$ through a PDE-penalty and leverage generalized stability, which yields nested quadratic optimization problems. This yields explicit, non-asymptotic $o(N^2)$ runtime guarantees for the discretized estimators (cf.~Theorem~\ref{thm-darcy-polytime}), whereas obtaining comparable end-to-end complexity guarantees for transport-based inversion would require a separate analysis of the full numerical pipeline with hyperbolic PDEs.

\subsubsection{Existing computational guarantees and MCMC mixing times}

There has been an extensive literature developing on high-dimensional Bayesian computation in recent years. In particular, there has been a surge of interest in high-dimensional log-concave sampling \cite{D17,DM18} and infinite-dimensional mixing of MCMC algorithms \cite{HSV14}. However, in the present context non-linear inverse problems, the `Gibbs measure' $\Pi(\cdot|Z^{(N)})$ is non-convex and computational guarantees have remained a core challenge. As discussed in Section \ref{sec-intro-warmstart}, a line of work starting from the paper \cite{NW22} (see \cite{altmeyer2022polynomial,BN24, nickl2023bayesian}) has derived several theoretical computational guarantees for Langevin-type Markov Chains upon warm-start. In the Schr\"odinger model, even a global polynomial-time computation guarantee was shown in \cite{NW22}; albeit using completely different proof techniques from ours. The techniques there rely on so-called `gradient stability' arguments for the linerization $D\mathcal G$ of the forward operator to assert local log-concavity of the expected likelihood function, and then utilize concentration-of-measure arguments from empirical process theory \cite{talagrand2005generic} to assert the local log-concavity of $\ell_N$ with high probability. While this approach achieved the first \textit{qualitative} polynomial-time results, it seemed rather difficult to make the exponents in the runtime guarantees explicit. Moreover, the computational feasibility seems to be reliant on the occurrence of a high-probability event, which may only have significant probability for large $N$. Thirdly, due to the aforementioned concentration-of-measure arguments, extremely strong regularity assumptions (e.g., $d\le 3$ and $\alpha\ge 22$) are required---this can be seen in our polynomial-time MCMC Theorem \ref{thm-darcy-MCMC}, which builds upon those references. One of the main goals of this work was to relax those stringent assumptions. In line with this, the computational runtime of our statistical estimators have explicit (sub-quadratic) exponents and is \textit{deterministic}, taking a step towards more practical algorithmic guarantees. However, it is not yet clear whether our estimators can achieve (semi-parametrically) efficient uncertainty quantification \cite{V98,nickl2023bayesian}, see Section \ref{sec-further-dis} for more comments.

We also mention here the classical regularization-based literature which has considered (penalized) least squares schemes---typically in a context of deterministic noise, see, e.g.~\cite{hofmann2007convergence, neubauer1989tikhonov,HNS95,H17,KNS08}. Similarly to the statistical setting here, the algorithmic guarantees developed in this literature are typically limited by the non-convexity of the involved least-squares functionals, and are thus limited to local `warm-start' guarantees; we refer to \cite{KNS08} for an overview.

\subsubsection{PINN, PDE-regularized methods, `all-at-once' formulations}

The idea of penalizing the deviation from the `physics' prescribed by the defining equation $L_f=g$ is, by now, common in many key methodologies in applied mathematics and scientific computing. Related to our Method I \eqref{relax-step2} are the `all-at-once' formulations of inverse problems \cite{kaltenbacher2016regularization} which also utilize the idea of augmenting the optimization space. The most notable recent methodology using this are physics-informed neural networks (PINN) \cite{RPK19, doumeche2025convergence}. While these methodologies were initially aimed at solving (potentially high-dimensional) PDEs, they have recently also been extended to the solution of inverse problems, we refer the reader to \cite{PINN-review22} for an overview. Previous to PINNs, similar ideas have appeared e.g.~in optimal control of PDEs \cite{troltzsch2010optimal} and in PDE-regularized regression \cite{sangalli2021spatial}, albeit usually not targeting inverse problems. For the above methodologies, some statistical consistency theory has gradually developed over recent years---see, e.g., \cite{PDEregularizedConsistency}, \cite{doumeche2025convergence}, \cite{lu2021PINN}. However, at least for PINN-type methodologies, the question of rigorous statistical-algorithmic guarantees yet seems out of reach---due to the highly non-linear neural network parameterization of functional parameters there---and is a highly ambitious research future direction.

\subsubsection{Non-linear PDEs}
We conclude the introduction with a brief discussion of the applicability of our theory to even more complicated non-linear inverse problems. Methodologically, our methods can be applied to any inverse problem given by $\mathcal L_f[u] = g$, including the case when the defining PDE is \textit{non-linear}, in the sense that $L_f$ is a non-linear differential operator. In cases where the mapping $(f,u)\mapsto L_fu$ is still linear in the $f$-variable, both steps of the plug-in estimator constitute quadratic optimisation problems, and thus are computationally tractable. This includes the case of inferring the interaction potential in McKean-Vlasov equations \cite{McKean-Vlasov-RayPavliotis}, and inferring the non-linear `reaction function' in reaction-diffusion type equations arising in physics, chemistry and biology. In these models, traditional likelihood-based and Bayes methods are even more expensive that in the elliptic PDE examples discussed here, and it would be of significant interest to derive statistical guarantees via generalized stability estimates for plug-in methods in these non-linear models. In contrast, there is a class of non-linear inverse problems where the parameter $f$ to be inferred is not defined on the same spatial (or spatio-temporal) domain as the regression measurements, whence it seems challenging to directly apply our ideas. Notably, this includes inference problems on the initial condition in data assimilation of  (deterministic) dynamical systems \cite{NT24,nickl2024bernstein} and non-linear X-ray transform problems \cite{paternain2023geometric} with boundary measurements.

\subsection*{Some notation}
We fix some notation used throughout. For an open set $\mathcal O\subseteq \R^d$, we denote by $H^\alpha(\mathcal O)$
the standard $L^2(\mathcal O)$–Sobolev spaces of functions with square-integrable
weak derivatives up to order $\alpha$. Moreover, $C(\mathcal O)$ and
$C(\overline{\mathcal O})$ denote the spaces of continuous functions on
$\mathcal O$ and $\overline{\mathcal O}$, respectively, equipped with the supremum norm.
For integers $k\ge 1$, $C^k(\mathcal O)$ denotes the space of $k$-times continuously
differentiable functions for which the supremum norm of all partial derivatives up to order $k$ is finite,
\[
\|u\|_{C^k(\mathcal O)} := \max_{|\beta|\le k}\sup_{x\in\mathcal O}|\partial^\beta u(x)|.
\]
For non-integer $\alpha>0$, $C^\alpha(\mathcal O)$
denotes the usual Hölder spaces. Finally, we write $\nabla$, $\nabla\cdot$, and
$\Delta$ for the gradient, divergence, and Laplacian operators.

\section{Main results for PDE models}\label{sec:pde-results}

This section contains our main findings for two prototypical PDE model examples, the \textit{Darcy problem} and the \textit{steady-state Schr\"odinger equation}. In both cases, the forward map $\mathcal G$ arises as a parameter-to-solution operator of an elliptic PDE where the infinite-dimensional statistical parameter space $\mathcal F$ is assumed to consist of Sobolev-regular functions. Throughout, let $\mathcal O\subseteq \R^d$ denote a bounded, smooth (open) domain with smooth boundary $\partial \mathcal O$, assumed without loss of generality to have unit Lebesgue measure, $\text{vol}(\mathcal O)=1$. For integer $\alpha>0$, $f_{min}>0$ and $R>0$, we denote
\begin{equation}\label{F-def}
    \mathcal F= \mathcal F(\alpha,f_{min}, R) := \big\{ f\in H^\alpha (\mathcal O) : f\ge f_{min},~ \|f\|_{H^\alpha(\mathcal O)}\le R \big\}.
\end{equation}
In the Darcy model, the lower bound $f_{min}>0$ ensures uniform ellipticity, while the smoothness level $\alpha$ will determine the convergence rates achieved. The ensuing theory requires $\alpha>d/2$ in order to guarantee sufficient regularity for the PDE analysis---in this case, the Sobolev embedding
$H^\alpha(\mathcal O)\hookrightarrow C^\eta(\mathcal O)$ for any $\eta\in(0,\alpha-d/2)$
ensures that functions in $\mathcal F$ admit pointwise representatives. Throughout this section, we consider the random design regression model
\begin{equation}\label{PDE-data}
    Y_i = \mathcal G(f)(X_i) + \varepsilon_i, \qquad
    X_i \sim^{i.i.d.} \mathrm{Unif}(\mathcal O), \quad
    \varepsilon_i \sim^{i.i.d.} N(0,1),\quad 1\le i\le N,
\end{equation}
where $(X_i)_{i=1}^N$ and $(\varepsilon_i)_{i=1}^N$ are independent. While our techniques extend to more general sampling distributions $X_i\sim P_X$
and to broader noise classes (e.g.\ sub-Gaussian $\varepsilon_i\in\R$),
we restrict attention to the present setting for clarity.

Guided by the intuition developed in Section~\ref{sec-motivation},
we establish theoretical guarantees for both the PDE-penalized M-estimators defined in \eqref{relax-step2} and the corresponding plug-in estimators from \eqref{plugin-1}; see Theorem~\ref{Darcy-rate}.
A central result is Theorem~\ref{thm-darcy-polytime}, which shows that a high-dimensional wavelet discretization of the plug-in estimator attains the best currently known statistical convergence rates while remaining computable in sub-quadratic time. Further computational implications of these results, including polynomial-time
sampling guarantees, are discussed in Section~\ref{sec-darcy-MCMC}, while adaptive rates are derived in Section~\ref{sec-adaptive}. These results focus on the Darcy model. Analogous statements for the steady-state Schr\"odinger equation are discussed more briefly, due to space constraints, in Section~\ref{sec-schroe}.

\subsection{Darcy flow: infinite-dimensional estimators}\label{sec-darcy}

Let $g\in C^\infty(\mathcal O)$ be fixed. For any conductivity $f:\mathcal O\to(0,\infty)$
satisfying $\inf_{x\in\mathcal O} f(x)>0$ and $f\in C^{1+\eta}(\bar{\mathcal O})$
for some $\eta>0$, the Dirichlet problem
\begin{equation}\label{darcy}
  \begin{cases}
\nabla\cdot(f\nabla u)=g & \text{in }\mathcal O,\\
u=0 & \text{on }\partial\mathcal O,
\end{cases}  
\end{equation}
admits a unique classical solution $u_f\in C^2(\mathcal O)\cap C(\bar{\mathcal O})$
by standard elliptic theory \cite[Theorem~6.14]{GT98}.
We denote by $\mathcal G(f)=u_f$ the associated parameter-to-solution map. Whenever $\alpha>d/2+1$ and $f\in\mathcal F(\alpha,f_{\min},R)$,
the Sobolev embedding
$H^\alpha(\mathcal O)\hookrightarrow C^{1+\eta}(\mathcal O)$ for
$\eta\in(0,\alpha-d/2-1)$
ensures that $\mathcal G(f)$ is well-defined for all $f\in\mathcal F$.

The following theorem considers two estimation procedures for the Darcy model.
The first is the PDE-penalized M-estimator introduced in \eqref{relax-step2}, with suitably chosen Tikhonov penalties. Writing $L_f u=\nabla\cdot(f\nabla u)$, we define the PDE-penalized M-estimator as any measurable minimizer of the functional
\begin{equation}\label{darcy-PDE-penalized}
    \begin{split}
J(u,f)
=\frac{1}{N}\sum_{i=1}^N\big[Y_i-u(X_i)\big]^2
+ \lambda_N^2\|L_f u-g\|_{L^2(\mathcal O)}^2 + \mu_N^2\Big(\|u\|_{H^{\alpha+1}(\mathcal O)}^2
+ \|f\|_{H^\alpha(\mathcal O)}^2\Big),
\end{split}
\end{equation}
over $(u,f)\in H^{\alpha+1}(\mathcal O)\times H^\alpha(\mathcal O)$,
with regularization parameters
\begin{equation}\label{Darcy-hyperparams}
\lambda_N = N^{-\frac{2}{2(\alpha+1)+d}},
\qquad
\mu_N = N^{-\frac{\alpha+1}{2(\alpha+1)+d}}.
\end{equation}
The second estimator is the two-stage plug-in M-estimator from \eqref{plugin-1}. Specifically, it is given by
\begin{equation}\label{darcy-plugin}
\begin{split}
\hat u_N
&\in \argmin_{u\in H^{\alpha+1}(\mathcal O)}
\Big\{\frac{1}{N}\sum_{i=1}^N\big[Y_i-u(X_i)\big]^2
+ \mu_N^2\|u\|_{H^{\alpha+1}(\mathcal O)}^2\Big\},\\
\hat f_N
&\in \argmin_{f\in H^\alpha(\mathcal O)}
\Big\{\|\nabla\cdot(f\nabla \hat u_N)-g\|_{L^2(\mathcal O)}^2
+ \nu_N^2\|f\|_{H^\alpha(\mathcal O)}^2\Big\},
\end{split}
\end{equation}
where $\mu_N$ is as in \eqref{Darcy-hyperparams} and $\nu_N = N^{-\frac{\alpha-1}{2(\alpha+1)+d}}.$ Note that in \eqref{darcy-plugin}, both optimization problems are strictly convex (in fact, quadratic) and therefore admit unique
solutions.

Importantly, both formulations allow the estimator of $u_0=\mathcal G(f_0)$
to lie outside the range of the forward operator $\mathcal G$, and they are formulated as unconstrained problems over the \textit{linear spaces} $H^\alpha(\mathcal O)$ and $H^{\alpha+1}(\mathcal O)$, respectively, in particular neither enforcing the positivity constraint of $f>0$ nor the range constraint $u\in \{\mathcal G(f):f\in\mathcal F\}$. A priori, it is not evident that such a procedure should achieve the same statistical guarantees as likelihood-based approaches
\cite{NVW18,nickl2023bayesian, GN19}.
As explained in Section \ref{sec-generalized-stability-motivation}, this is owing to the generalized
stability properties of the Darcy problem which we will state below in Lemma \ref{darcy-generalized-stability}. We now state the main convergence result for both estimators, which will be proven in Section \ref{sec-darcy-infdim-proof}.

\begin{thm}\label{Darcy-rate}
    Let $d\ge 1$ and suppose $\mathcal O\subseteq \R^d$ is a smooth, bounded domain. Let $\alpha> d/2+1$ and $\alpha \ge 3$. Fix constants $f_{min}>0,~R>0$. Suppose that the data (\ref{PDE-data}) arise from some $f_0\in \mathcal F(\alpha,f_{min},R)$ and let $(\hat u_N,\hat f_N)$ denote either the generalized M-estimator from \eqref{darcy-PDE-penalized} or the plug-in M-estimator from \eqref{darcy-plugin}. There exists $C>0$ only depending on $\mathcal O,d, \alpha,f_{min},R$ such that for all $N\ge 1$,
    \begin{align}
            \E_{f_0}^N\big[  \|\hat u_N - \mathcal G(f_0)\|_{L^2(\mathcal O)}^2\big] &\le C N^{-\frac{2(\alpha+1)}{2(\alpha+1)+d}}, ~~\text{and}\label{Darcy-forward}\\
            \E_{f_0}^N \big[ \|\hat f_N - f_0\|_{L^2(\mathcal O)}^2 \big]&\le C N^{-\frac{2(\alpha-1)}{2(\alpha+1)+d}}.\label{Darcy-inverse}
    \end{align}
\end{thm}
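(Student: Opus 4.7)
The plan is to treat the plug-in estimator and the PDE-penalized M-estimator within a common three-step framework: (i) derive convergence rates for the regression surrogate of $\mathcal G(f_0)$ both in $L^2(\mathcal O)$ and in a higher-order Sobolev norm; (ii) deduce, via a basic inequality, a bound on the PDE-residual $\|\nabla\cdot(\hat f_N\nabla \hat u_N)-g\|_{L^2}$ and on $\|\hat f_N\|_{H^\alpha}$; (iii) invoke the generalized stability estimate of Proposition \ref{darcy-generalized-stability} to convert the residual bound into the $L^2$-rate for $\hat f_N-f_0$. The forward rate \eqref{Darcy-forward} will be produced already at step (i).

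For the plug-in procedure, step (i) reduces to a Sobolev-penalized least squares problem in $H^{\alpha+1}(\mathcal O)$. Under $\alpha>d/2+1$ and $f_0\in\mathcal F$, elliptic regularity yields $u_0=\mathcal G(f_0)\in H^{\alpha+2}(\mathcal O)$ with norm controlled by $(\mathcal O,d,\alpha,f_{\min},R)$, so $u_0$ lies in the RKHS defined by the penalty. Standard localized empirical-process/oracle inequalities for penalized least squares, combined with uniform entropy bounds for balls in $H^{\alpha+1}(\mathcal O)$, then deliver $\mathbb E_{f_0}^N\|\hat u_N-u_0\|_{L^2}^2 \lesssim N^{-2(\alpha+1)/(2(\alpha+1)+d)}$ together with $\|\hat u_N\|_{H^{\alpha+1}}=O(1)$ with overwhelming probability, yielding \eqref{Darcy-forward}. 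Interpolating,
\[
\|\hat u_N - u_0\|_{H^2} \lesssim \|\hat u_N - u_0\|_{L^2}^{1-\tfrac{2}{\alpha+1}}\,\|\hat u_N - u_0\|_{H^{\alpha+1}}^{\tfrac{2}{\alpha+1}},
\]
produces the intermediate expected squared rate $N^{-2(\alpha-1)/(2(\alpha+1)+d)}$, which will exactly match the target for $\hat f_N$. Step (ii) is then immediate from $J_2(\hat f_N)\le J_2(f_0)$: since $L_{f_0}u_0=g$, one has $L_{f_0}\hat u_N-g=\nabla\cdot(f_0\nabla(\hat u_N-u_0))$, of order $\|\hat u_N-u_0\|_{H^2}$ with prefactor depending on $\|f_0\|_{C^1}$, so that $\|L_{\hat f_N}\hat u_N-g\|_{L^2}$ and $\nu_N\|\hat f_N\|_{H^\alpha}$ are both of that order. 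Step (iii), applying the generalized stability bound (expected to be of schematic form $\|f-f_0\|_{L^2}\lesssim \|L_f u-g\|_{L^2}+\|u-u_0\|_{H^2}+\text{lower-order bi-linear terms}$) at $(f,u)=(\hat f_N,\hat u_N)$, closes the argument.

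For the PDE-penalized M-estimator the analysis is carried out jointly. The basic inequality $J(\hat u_N,\hat f_N)\le J(u_0,f_0)$, combined with $L_{f_0}u_0=g$, produces simultaneous control of the empirical $\ell^2_N$-fit of $\hat u_N$, the PDE penalty $\lambda_N^2\|L_{\hat f_N}\hat u_N-g\|_{L^2}^2$, and the smoothness norms $\|\hat u_N\|_{H^{\alpha+1}},\|\hat f_N\|_{H^\alpha}$. Converting the empirical fit into a population $L^2$-fit by peeling/uniform-entropy arguments on a Sobolev ball, and then repeating the interpolation and generalized stability step as above, produces both \eqref{Darcy-forward} and \eqref{Darcy-inverse}; the tuning choices \eqref{Darcy-hyperparams} are exactly those which balance the noise and approximation contributions.

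The main technical obstacles are twofold. First, the choice of $\mu_N,\lambda_N,\nu_N$ and of the interpolation exponent $2/(\alpha+1)$ must balance precisely, so that the $H^2$-rate obtained by interpolation matches the PDE-residual rate delivered by the basic inequality; the assumption $\alpha\ge 3$ is used here to keep this interpolation well posed relative to the $H^{\alpha+1}$-smoothness scale. Second, the bi-linear cross terms of the form $(\hat f_N-f_0)\nabla(\hat u_N-u_0)$ produced when expanding $L_{\hat f_N}\hat u_N-L_{f_0}u_0$ must be controlled by a sup-norm estimate, which will use the Sobolev embeddings $H^\alpha(\mathcal O),H^{\alpha+1}(\mathcal O)\hookrightarrow C^1(\mathcal O)$ (valid under $\alpha>d/2+1$) together with the a priori norm bounds on $\hat u_N,\hat f_N$ established earlier.
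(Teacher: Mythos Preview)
Your proposal is correct and follows essentially the same approach as the paper: apply a penalized M-estimation concentration/oracle inequality (packaged there as Theorem~\ref{thm-random-design}) to control $\|\hat u_N-u_0\|_{L^2}$ and the Sobolev norms, transfer from empirical to population $L^2$-norm, interpolate to $H^2$, use the basic inequality, and then invoke the generalized stability Lemma~\ref{darcy-generalized-stability}. One minor slip: for Darcy the elliptic regularity gives $u_0\in H^{\alpha+1}(\mathcal O)$ (not $H^{\alpha+2}$, which is the Schr\"odinger gain), but this is harmless since the penalty is placed in $H^{\alpha+1}$ anyway.
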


Two convergence rates are established in Theorem~\ref{Darcy-rate}.
The first concerns the forward problem of estimating $\mathcal G(f_0)$
and coincides with the minimax rate for estimating $(\alpha+1)$-smooth regression
functions. It is shown in \cite{NVW18} that this rate is optimal, in a minimax sense,
over the model class $\{u_f : f\in\mathcal F(\alpha,f_{\min},R)\}$. The second rate addresses the inverse problem of recovering the conductivity $f_0$.
At a heuristic level, this rate corresponds to the minimax estimation rate in a $2$-ill-posed inverse problem where the observed signal is $\alpha+1$-smooth. Indeed, the inverse map $\mathcal G^{-1}:u\mapsto f$ is known to
satisfy a $2$-ill-posed stability estimate: for sufficiently smooth and uniformly
elliptic coefficients $f_1,f_2$, one has
\[
\|f_1-f_2\|_{L^2(\mathcal O)}
\;\lesssim\;
\|u_{f_1}-u_{f_2}\|_{H^2(\mathcal O)},
\]
see, for instance, Lemma~24 in \cite{NVW18}. Similar stability estimates appear in
earlier works, such as \cite{R81} for $L^\infty$--$C^2$ bounds and
\cite{itokunisch} for one-dimensional $L^2$ estimates. The central analytical insight underlying
Theorem~\ref{Darcy-rate} is that suitable stability estimates continue to hold
even when the PDE constraint is satisfied only approximately. The following lemma makes this \emph{generalized stability} phenomenon precise.

\begin{restatable}{lem}{darcygeneralizedstability}
\label{darcy-generalized-stability} Let $\mathcal O\subseteq \R^d$ be a smooth, bounded domain and let $f_{min}, g_{min}>0$. Suppose that for $i\in \{1,2\}$, we have functions 
\[ u_i \in H^2(\mathcal O),~~~f_i\in C^1(\mathcal O),~~~g_i\in L^2(\mathcal O)~~~\text{with}~~~\nabla\cdot(f_i\nabla u_i) = g_i~~~\text{on}~\mathcal O.\]
Assume that the first triplet $(u_1,f_1,g_1)$ additionally satisfies $u_1\in C^2(\mathcal O)\cap C(\bar {\mathcal O})$, $g_1\in C(\mathcal O)$ and 
\[\inf_{x\in \mathcal O}f_1(x) \ge f_{min}~~~~ \text{and}~~~~\inf_{x\in \mathcal O}g_1(x)\ge g_{min},~~~~\text{and}~~~~u_1|_{\partial \mathcal O}=0.\]
\noindent Then, for every $B>0$ there exists a constant $C>0$ solely depending on $f_{min},g_{min},\mathcal O$ and $B$ such that whenever $\|f_1\|_{C^1}\vee \|u_{1}\|_{C^2}\le B$, it holds that
\[ \|f_1-f_2\|_{L^2}\le C\big(  \|f_2\|_{C^1} \|u_{1}-u_{2}\|_{H^2} + \|g_1-g_2\|_{L^2}\big).  \]
\end{restatable}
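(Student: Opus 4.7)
The plan is to reduce the claim to a first-order transport-type equation for $w:=f_1-f_2$ and then to establish an $L^2$-stability estimate for that equation by a weighted energy method, with the weight built from $u_1$ so as to exploit the sign of $g_1$. First I would subtract the two PDEs and split $f_1\nabla u_1-f_2\nabla u_2 = w\,\nabla u_1 + f_2\,\nabla v$, where $v:=u_1-u_2$, to obtain
\[
\nabla\cdot(w\,\nabla u_1) \;=\; g_1-g_2-\nabla\cdot(f_2\nabla v) \;=:\; R,
\]
or equivalently, using $u_1\in C^2$, the transport equation $\nabla u_1\cdot\nabla w+(\Delta u_1)\,w = R$ pointwise in $\mathcal O$. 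Expanding the divergence on the right and using $f_2\in C^1$, $v\in H^2$ already gives the desired RHS bound
\[
\|R\|_{L^2(\mathcal O)} \;\le\; \|g_1-g_2\|_{L^2} + C(\mathcal O)\,\|f_2\|_{C^1}\,\|v\|_{H^2}.
\]
The remaining task is therefore the stability bound $\|w\|_{L^2}\lesssim\|R\|_{L^2}$ with a constant depending only on $f_{\min}, g_{\min}, \mathcal O, B$.

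For this, I would first invoke the weak maximum principle and Hopf's lemma for $u_1$: since $\nabla\cdot(f_1\nabla u_1)=g_1\ge g_{\min}>0$ with $f_1\ge f_{\min}$ and $u_1|_{\partial\mathcal O}=0$, one gets $u_1\le 0$ in $\mathcal O$ and $\partial_n u_1\ge 0$ on $\partial\mathcal O$. Now multiply the transport equation by $w\,e^{-\lambda u_1}$ with $\lambda>0$ to be chosen, and integrate by parts. Using $u_1=0$ on $\partial\mathcal O$ and $\nabla\cdot(e^{-\lambda u_1}\nabla u_1) = e^{-\lambda u_1}(\Delta u_1-\lambda|\nabla u_1|^2)$, one obtains
\[
\tfrac{1}{2}\!\int_{\partial\mathcal O}\!\! w^2\partial_n u_1 \;+\; \tfrac{\lambda}{2}\!\int_{\mathcal O}\!\! |\nabla u_1|^2 w^2 e^{-\lambda u_1} \;+\; \tfrac{1}{2}\!\int_{\mathcal O}\!\! (\Delta u_1)\,w^2 e^{-\lambda u_1} \;=\; \int_{\mathcal O}\!\! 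R\,w\,e^{-\lambda u_1}.
\]
The crucial substitution is $\Delta u_1=(g_1-\nabla f_1\cdot\nabla u_1)/f_1$, coming from the PDE satisfied by $(u_1,f_1,g_1)$, which rewrites the third integral as
\[
\tfrac{1}{2}\!\int_{\mathcal O}\!\! \tfrac{g_1}{f_1}\, w^2 e^{-\lambda u_1} \;-\; \tfrac{1}{2}\!\int_{\mathcal O}\!\! \tfrac{\nabla f_1\cdot\nabla u_1}{f_1}\, w^2 e^{-\lambda u_1}.
\]
The first summand is bounded below by $(g_{\min}/(2B))\int w^2 e^{-\lambda u_1}$; the cross term is absorbed by Young's inequality into the coercive gradient term $\tfrac{\lambda}{4}\int|\nabla u_1|^2 w^2 e^{-\lambda u_1}$ at the cost of an additive $\tfrac{C}{\lambda}\int w^2 e^{-\lambda u_1}$ with $C=C(B,f_{\min})$; choosing $\lambda$ of order $B^3/(g_{\min} f_{\min}^2)$ ensures this absorbable remainder is at most half of the coercive term. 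After Cauchy--Schwarz on the RHS and using $e^{-\lambda u_1}\in[1,e^{\lambda B}]$ (valid since $-B\le u_1\le 0$), one rearranges to get $\|w\|_{L^2(\mathcal O)}^2 \le C(B,f_{\min},g_{\min},\mathcal O)\,\|R\|_{L^2(\mathcal O)}^2$, which combined with the RHS bound yields the lemma.

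The main obstacle is the step that turns the transport equation with the indefinite zero-order coefficient $\Delta u_1$ into a quantitative $L^2$-bound on $w$. A naive multiplier $w$ already fails: after integrating by parts, the coercive contribution $g_1/f_1\ge g_{\min}/B$ arising from $\Delta u_1=(g_1-\nabla f_1\cdot\nabla u_1)/f_1$ can be swamped by the indefinite term $(\nabla f_1\cdot\nabla u_1)/f_1$, whose size is of order $B^2/f_{\min}$, so plain coercivity fails when $B$ is large relative to $g_{\min}f_{\min}$. The exponential weight $e^{-\lambda u_1}$ is precisely what allows us to trade the bad cross term against the additional $\lambda|\nabla u_1|^2$ generated by integration by parts; note also that the positivity of $g_1$ is essential to the argument, as it is what produces the coercive term in the first place. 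Finally, the $C^2$-bound on $u_1$ and the Hopf boundary estimate make the boundary integral nonnegative and harmless, which is why these are assumed only for the pair indexed by $1$.
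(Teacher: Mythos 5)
Your proof is correct and follows essentially the same route as the paper's: the same decomposition $\nabla\cdot(w\nabla u_1)=g_1-g_2-\nabla\cdot(f_2\nabla v)$ with the $L^2$-bound on the right-hand side, the same exponentially weighted multiplier built from $e^{-\lambda u_1}$ with the maximum principle rendering the boundary term nonnegative, and the same use of the equation for $(u_1,f_1,g_1)$ together with $g_1\ge g_{\min}$ to generate coercivity. The only (cosmetic) difference is how the weighted multiplier is bounded below: the paper derives the pointwise dichotomy that either $\Delta u_1$ or $\|\nabla u_1\|$ exceeds $g_{\min}/(2B)$ and then takes $\lambda$ large, whereas you substitute $\Delta u_1=(g_1-\nabla f_1\cdot\nabla u_1)/f_1$ and absorb the cross term by Young's inequality with an explicit choice $\lambda\sim B^3/(g_{\min}f_{\min}^2)$ — both yield the same estimate (and, as a side remark, only $\partial_n u_1\ge 0$ is needed, which already follows from the weak maximum principle without invoking Hopf's lemma).
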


A key feature of Lemma~\ref{darcy-generalized-stability} is the \emph{asymmetry} of its hypotheses. The main structural assumptions (uniform ellipticity, Dirichlet condition, and lower bound on $g_1$) are imposed only on the reference triplet $(u_1,f_1,g_1)$, while $(u_2,f_2,g_2)$ is unrestricted apart from basic regularity. Due to boundedness properties of the forward map $\mathcal G$, it turns out that the constant $B\ge \|f_1\|_{C^1}\vee \|u_{f_1}\|_{C^2}$ can be chosen uniformly over triplets $g_1=g$, $u_1=\G(f_0)$ and $f_1=f_0\in \mathcal F$ with $\alpha>d/2+1$ representing the data-generating ground truth. Moreover, we note that the proof of Theorem \ref{darcy-main} in fact yields stronger (sub-Gaussian) concentration inequalities for suitable
penalized empirical risk functionals---see the proof for details.

\subsection{Darcy flow: high-dimensional estimator}\label{sec-high-dim}

In this section, we present the high-dimensional estimator which underlies Theorem \ref{darcy-main}. The estimator arises from a high-dimensional wavelet frame discretization of the infinite-dimensional plug-in scheme \eqref{darcy-plugin}. Wavelets are chosen over alternative representation systems, such as Dirichlet Laplacian eigenfunctions, in order to accurately approximate functions in $H^{\alpha+1}(\mathcal O)$ up to the boundary $\partial\mathcal O$. The main result of this section is Theorem \ref{thm-darcy-polytime}, asserting that the estimator is computable in sub-quadratic runtime, while possessing the same statistical convergence properties as its infinite-dimensional counterpart from Theorem \ref{Darcy-rate}.

Before stating our main results, we clarify the following about the computation model used in this section.

\begin{rem}[Computation model]\normalfont
    The explicit complexity bounds in Theorem \ref{thm-darcy-polytime} count floating-point operations associated with matrix–vector products and linear system solves. We assume oracle access to point evaluations and $L^2(\mathcal O)$ inner products of wavelet basis functions and their derivatives, which may be precomputed to high precision. Moreover, we upper bound computational costs by that of dense linear algebra; exploiting additional sparse structure of $\Phi$ or $\Psi$ could further reduce runtime, but this is not pursued here. This computation model refines previously used notions of computation time, such as MCMC iteration complexity used in \cite{NW22} as well as in Section \ref{sec-darcy-MCMC} below, which presume precise oracle access to evaluations $\mathcal G(f)$ of the forward map.
\end{rem}

\paragraph*{Wavelet frames and preliminaries}

We introduce some notation and basic facts about the wavelet frames which will be used throughout. For a comprehensive treatment of this theory, we refer the reader to \cite{T08}. As before, let $\mathcal O\subseteq \R^d$ denote a smooth, bounded domain. For some $u>\alpha+1$, we denote by
\[
\big\{ \phi_{lk} : l\ge 0, 1 \leq k \leq N_l \big\},
\]  
a wavelet frame of $L^2(\mathcal O)$ consisting of wavelets with smoothness at least $C^u(\mathcal O)$ in the sense of Definition 5.25 in \cite{T08}; the existence of such a wavelet system is ensured by Theorem 5.51 of \cite{T08}. At `resolution level' $l \ge 0$, there are at most $N_l \lesssim 2^{l d}$ elements. Given any multi-scale vector
\[v= (v_{lk}: l\ge 0,~1\le k\le N_l),\]
we can then define the following sequence norms
\[ \|v\|_{h^\alpha}^2 := \sum_{l\ge 0}\sum_{k\le N_l} 2^{2l\alpha}v_{lk}^2, ~~\alpha\ge 0, \]
with $h^\alpha$ denoting the space of all sequences where the norm is finite. A key property  of the above wavelet frame is that $\|\cdot\|_{h^\alpha}$ characterizes the classical Sobolev norms. Indeed, for any $f\in H^\alpha(\mathcal O)$, it holds that 
\begin{equation}\label{frame-eq}
    \inf_v \|v\|_{h^\alpha} \simeq \|f\|_{H^\alpha(\mathcal O)}, 
\end{equation}
where the infimum runs over all admissible representations $f=\sum_{l,k} v_{lk} \phi_{lk}$, see Theorem 5.51 of \cite{T08} (with $p=q=2$). The infimum on the left hand side is necessary since the $(\phi_{lk})$ may not form a basis, whence the representation of $f$ may not be unique. However, there exists a bounded and linear operator $S: H^\alpha (\mathcal O) \to h^\alpha$ which selects a representation such that 
\[  \|S(f)\|_{h^\alpha} \simeq \|f\|_{H^\alpha(\mathcal O)},~~~~~ f= \sum_{l,k} S(f)_{lk} \phi_{lk}, \]
see Theorem 5.51 in \cite{T08}. We denote the selection operator for the $(l,k)$-th coefficient by 
\[S_{lk}: H^\alpha(\mathcal O) \to \R, ~~ S_{lk}(f) = S(f)_{lk}.\]
Throughout, we fix one such selection operator. We also need finite-dimensional truncations of the above wavelet spaces. For resolution level $J\ge 1$, we denote the subspace with frequencies up to level $J$ by
\[ L^2_J(\mathcal O) = \text{span}( \phi_{lk},~l\le J,~k\le N_l)\subseteq L^2(\mathcal O), \]
which has dimension 
\[ \text{dim}(L^2_J(\mathcal O)) \le p_J:= \sum_{l\le J}N_l \lesssim 2^{Jd},~~~ J\ge 1. \]
In slight abuse of notation, we will write $v\in \R^{p_J}$ for the Euclidean space of finite-resolution multi-scale vectors, and  
\[ \|v\|_{h^\alpha_J}^2:= \sum_{l\le J, k\le N_l} 2^{2l\alpha} v_{lk}^2,~~~~ \alpha\ge 0,~J\ge 1,~v\in\R^{p_J}. \]

\subsubsection{Construction of $\hat u_N$} Let $\mu_N = N^{-\frac{\alpha+1}{2(\alpha + 1)+d}}$ denote the minimax rate of estimation for $(\alpha+1)$-smooth functions. At sample size $N\ge 1$, we choose the resolution level $J=J_N$ such that
\[ 2^{Jd} \simeq N \mu_N^2 = N^{\frac{d}{2(\alpha +1) + d}}.\]
This choice balances the approximation error of the wavelet projection with the stochastic error of estimating $p_J\asymp 2^{Jd}$ coefficients for an $\alpha+1$-regular function. The estimator $\hat u_N$ will be given as 
\begin{equation}\label{u-hat-darcy-high-dim}
    \hat u_N := \sum_{l \leq J, k\le N_l} \hat \eta_{lk}\phi_{lk} \in H^{\alpha+1}(\mathcal O), 
\end{equation}
where $\hat \eta_N$ is the (unique) minimizer of the quadratic functional
\[
\frac{1}{N} \sum_{i=1}^N \Big( Y_i - \sum_{l \leq J, k\le N_l} \eta_{lk} \, \phi_{lk}(X_i) \Big)^2
+ \mu_N^2 \| \eta \|_{h^{\alpha+1}_J}^2~~~\text{over}~~~\eta\in \R^{p_J}.\]
In matrix form, we can write this as a high-dimensional penalized least squares problem
\begin{equation}\label{u-finite-dim}
    \hat \eta_N =\argmin_{\eta \in \R^{p_J}} \Big( \frac 1N \big\| Y- \Phi \eta \big\|_{\R^N}^2 +  \mu_N^2\eta^T \Lambda^{\alpha+1} \eta \Big),
\end{equation}
where $\|\cdot\|_{\R^N}$ denotes the Euclidean norm, $\Phi \in \R^{N\times p_J}$ is the `design matrix' with entries $\Phi_{i,(l,k)}=\phi_{lk}(X_i)$ 
and $\Lambda$ is the $p_J\times p_J$ diagonal matrix representing the $h^{\alpha+1}$-norms, 
\begin{equation}\label{lambda-matrix}
     \Lambda := \text{diag}\big(1\dots \underbrace{2^{2l}\dots 2^{2l}}_{N_l~\text{times}}\dots 2^{2J} \dots 2^{2J} \big).
\end{equation}
From the formulation, one sees immediately that \eqref{u-finite-dim} indeed has a unique minimizer $\hat \eta_N$---since $\Lambda$ is strictly positive definite---given by
\begin{equation}\label{hat-eta-darcy}
    \hat\eta_N = \big(\Phi^T\Phi +N\mu_N^2 \Lambda^{\alpha+1} \big)^{-1} \Phi^TY.
\end{equation}

Since $p_J\asymp 2^{Jd}\asymp N\mu_N^2 = N^{\frac{d}{2(\alpha+1)+d}}$, 
the cost of forming $\Phi^\top\Phi$ and solving the resulting linear system involving the inversion of the inversion of $(\Phi^T\Phi +\mu_N^2 \Lambda^{\alpha+1})$ (assuming dense linear algebra) is bounded by 
\[
O(Np_J^2+p_J^3)=O\big(N^{1+\frac{2d}{2(\alpha+1)+d}}+N^{\frac{3d}{2(\alpha+1)+d}}\big) =O\big(N^{1+\frac{2d}{2(\alpha+1)+d}}\big),
\]
the computational cost of the remaining matrix-vector operations are of smaller order.

\subsubsection{Construction of $\hat f_N$} Conditional on $\hat u_N$, the plug-in estimation step for calculating $\hat f_N$ is again given as a basis expansion
\begin{equation}\label{f-hat-darcy}
    \hat f_N := \sum_{l \leq J, k\le N_l} \hat \theta_{lk}\phi_{lk},
\end{equation}
where $\hat\theta_N$ is the solution to the following high-dimensional penalized least squares problem. With $J=J_N$ chosen as before, we write $\gamma \in \R^{p_J}$ for the frame coefficients of the source $g$,
\[ \gamma_{lk} = S_{lk}(g), ~~~\forall ~l\le J,~k\le N_l. \]
The estimator $\hat \theta_N$ is then given as 
\[ \hat \theta_N = \argmin_{\theta\in \R^{p_J}}  \| \gamma - \Psi \theta \|_{\R^{p_J}}^2 +  \nu_N^2 \theta^T \Lambda^\alpha \theta, \]
where $\nu_N= N^{-\frac{(\alpha-1)}{2(\alpha+1)+d}}$ is chosen as in the infinite-dimensional case, $\Lambda$ is given by \eqref{lambda-matrix}, and the matrix $\Psi \in \R^{p_J\times p_J}$ is given by the coefficients
\begin{equation*}
    \begin{split}
    \Psi_{(lk), (l'k')}&= S_{l'k'} \big(\nabla \cdot ( \phi_{lk} \nabla \hat u_N)\big) \\
    &= S_{l'k'}(\nabla \phi_{lk}\cdot \nabla \hat u_N) +S_{l'k'}(\phi_{lk}\Delta \hat u_N)\\
    &= \sum_{m\le J, j\le N_m} (\hat \eta_N)_{mj}  \big[  S_{l'k'} (\nabla \phi_{lk}\cdot \nabla \phi_{mj} ) + S_{l'k'} (\phi_{lk}\Delta \phi_{mj}) \big].
    \end{split}
\end{equation*}
Here, $\hat \eta_N$ is the estimator given by \eqref{hat-eta-darcy}. Note that constructing $\Psi$ requires access to wavelet-frame coefficients of products of basis functions and their derivatives, such as 
$S_{l'k'}(\nabla \phi_{lk}\cdot\nabla \phi_{mj})$ and $S_{l'k'}(\phi_{lk}\Delta \phi_{mj})$; 
we treat these as available oracle quantities (or as precomputable offline to high precision) within our computation model. Given $\Psi$, $\hat \theta_N$ possesses a closed-form expression
\begin{equation}\label{theta-hat-darcy}
    \hat\theta_N = \big(\Psi^T\Psi +\nu_N^2 \Lambda^{\alpha} \big)^{-1} \Psi^T\gamma, ~~~~~\hat\theta_N \in \R^{p_J}.
\end{equation}

\begin{thm}\label{thm-darcy-polytime}
    Consider the Darcy flow problem on a bounded, smooth domain $\mathcal O\subseteq \R^d$ with smooth and positive $g\in C^\infty (\mathcal O)$. Suppose that $\alpha>d/2 +1$ and $f_0\in\mathcal F(\alpha, f_{min},R)$ for some $f_{min},R>0$. There exist constants $C_1,C_2>0$ depending at most on $\alpha,d,\mathcal O,f_{min},R$ and an exponent $\kappa = \kappa (d,\alpha)<2$ such that for all $N\ge 1$ the estimators $\hat\eta_N, \hat u_N, \hat \theta_N, \hat f_N$ defined by \eqref{u-hat-darcy-high-dim}-\eqref{theta-hat-darcy} satisfy the convergence rates 
    \begin{equation}
        \begin{split}
        \E_{f_0}^N[ \| \hat u_N-u_0\|_{L^2}^2] \le C_1N^{-\frac{2(\alpha+1)}{2(\alpha+1)+d}},~~~~\E_{f_0}^N[ \| \hat f_N-f_0\|_{L^2}^2] \le C_1 N^{-\frac{2(\alpha-1)}{2(\alpha+1)+d}},
        \end{split}
    \end{equation}
    and the high-dimensional vectors $\hat \eta_N,\hat \theta_N\in \R^{p_J}$ are computable within $C_2 N^{\kappa}$ floating-point operations.
\end{thm}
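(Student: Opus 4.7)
The plan is to mirror the infinite-dimensional proof of Theorem \ref{Darcy-rate} after passing to the $p_J$-dimensional wavelet space, and then to count the arithmetic operations entering the closed-form expressions \eqref{hat-eta-darcy} and \eqref{theta-hat-darcy}. The structural fact driving both parts is $p_J\asymp 2^{Jd}\asymp N^{d/(2(\alpha+1)+d)}$, which under $\alpha>d/2+1$ yields $p_J\ll N$ and makes both $Np_J^2$ and $p_J^3$ sub-quadratic in $N$.

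For the first convergence rate I would treat $\hat u_N$ as a standard Tikhonov-penalised projection onto $L^2_J(\mathcal O)$. By the frame norm equivalence \eqref{frame-eq}, the penalty $\mu_N^2\eta^\top\Lambda^{\alpha+1}\eta$ is comparable to $\mu_N^2\|\hat u_N\|_{H^{\alpha+1}}^2$, and $u_0 = \mathcal G(f_0)\in H^{\alpha+1}(\mathcal O)$ by elliptic regularity. Testing the empirical objective against the wavelet truncation of $u_0$ and applying a Bernstein/Talagrand localisation argument over the $p_J$-dimensional linear class (in the style of Chapter 13 of \cite{nickl2023bayesian}) gives both
\begin{equation*}
\E_{f_0}^N\!\left[\|\hat u_N - u_0\|_{L^2}^2 + \mu_N^2\|\hat u_N\|_{H^{\alpha+1}}^2\right]\;\lesssim\;\mu_N^2
\end{equation*}
and a sub-Gaussian tail bound. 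Combined with the Sobolev embedding $H^{\alpha+1}(\mathcal O)\hookrightarrow C^2(\mathcal O)$ (valid for $\alpha>d/2+1$), this provides the high-probability $C^2$-bound on $\hat u_N$ required to invoke Lemma \ref{darcy-generalized-stability}.

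For the second rate I would apply Lemma \ref{darcy-generalized-stability} to the triplets $(u_0,f_0,g)$ and $(\hat u_N,\hat f_N,\nabla\!\cdot\!(\hat f_N\nabla \hat u_N))$. Testing the Euclidean objective defining \eqref{theta-hat-darcy} against the wavelet projection $P_J f_0$ of $f_0$, and translating back to $L^2$ via frame equivalence, gives the basic inequality
\begin{equation*}
\|\nabla\!\cdot\!(\hat f_N\nabla \hat u_N)-g\|_{L^2}^2 + \nu_N^2\|\hat f_N\|_{H^\alpha}^2 \;\lesssim\; \|\nabla\!\cdot\!((P_J f_0)\nabla \hat u_N)-g\|_{L^2}^2 + \nu_N^2
\end{equation*}
up to a high-frequency remainder that is negligible by the smoothness of $g$, $\hat u_N$, $\hat f_N$. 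Decomposing the right-hand side as $\nabla\!\cdot\!((P_J f_0 - f_0)\nabla \hat u_N) + \nabla\!\cdot\!(f_0\nabla(\hat u_N - u_0))$ and controlling the two pieces via $\|P_J f_0 - f_0\|_{H^1}\lesssim 2^{-J(\alpha-1)}\asymp\nu_N$ together with the interpolation bound
\begin{equation*}
\|\hat u_N - u_0\|_{H^2}\;\lesssim\;\|\hat u_N - u_0\|_{L^2}^{(\alpha-1)/(\alpha+1)}\|\hat u_N\|_{H^{\alpha+1}}^{2/(\alpha+1)}\;\lesssim\;\nu_N
\end{equation*}
(using the $H^{\alpha+1}$-control from step one) shows that the PDE residual is $O(\nu_N)$ and $\|\hat f_N\|_{H^\alpha}\lesssim 1$. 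Lemma \ref{darcy-generalized-stability} then yields $\|\hat f_N - f_0\|_{L^2}\lesssim \|\hat f_N\|_{C^1}\,\nu_N + \nu_N\lesssim\nu_N$, since $\|\hat f_N\|_{C^1}\lesssim\|\hat f_N\|_{H^\alpha}\lesssim 1$ by Sobolev embedding; a standard truncation on the low-probability complementary event produces the bound in expectation.

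Finally, for the arithmetic cost I would count operations directly in \eqref{hat-eta-darcy} and \eqref{theta-hat-darcy}. Forming $\Phi^\top\Phi$ takes $O(Np_J^2)$; each dense factorisation of $\Phi^\top\Phi + N\mu_N^2\Lambda^{\alpha+1}$ and $\Psi^\top\Psi + \nu_N^2\Lambda^\alpha$ costs $O(p_J^3)$; building $\Psi$ requires summing $p_J$ precomputed triple-frame coefficients per entry, so $O(p_J^3)$ overall; the remaining matrix--vector products are of lower order. Since $p_J\le N$, the dominant term is $Np_J^2$, producing
\begin{equation*}
\kappa \;=\; 1 + \frac{2d}{2(\alpha+1)+d}\;<\;2,
\end{equation*}
where the strict inequality follows from $2d<2(\alpha+1)+d$, automatic under $\alpha>d/2+1$. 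The hard part will be establishing the uniform high-probability control of $\|\hat u_N\|_{H^{\alpha+1}}$ with a deterministic constant, so that Lemma \ref{darcy-generalized-stability} applies to the \emph{random} triplet $(\hat u_N,\hat f_N,\nabla\!\cdot\!(\hat f_N\nabla \hat u_N))$, together with the careful handling of the high-frequency tail of the PDE residual that is not directly penalised by the Euclidean-norm objective.
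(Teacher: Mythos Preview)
Your proposal is correct and follows essentially the same route as the paper: Theorem~\ref{thm-random-design} for the wavelet-projected $\hat u_N$, the basic inequality for the second step tested against the truncation $\theta_{0,J}$, interpolation to get $\|\hat u_N-u_0\|_{H^2}\lesssim\nu_N$, Lemma~\ref{darcy-generalized-stability} with $(u_0,f_0,g)$ as the first triplet, and the same operation count yielding $\kappa=1+2d/(2(\alpha+1)+d)$. Two small remarks: the $C^2$/positivity hypotheses of Lemma~\ref{darcy-generalized-stability} sit on the \emph{deterministic} reference triplet $(u_0,f_0,g)$, not on $\hat u_N$ (what you need for $\hat u_N$ and $\hat f_N$ is only $H^2$- and $C^1$-membership plus quantitative control of $\|\hat f_N\|_{C^1}$); and the paper closes the expectation bound via $k$-th moment control and H\"older rather than truncation, which is slightly cleaner since the random factors $\|\hat f_N\|_{H^\alpha}$ and $\|\hat u_N\|_{H^{\alpha+1}}$ have no deterministic a~priori bound---the ``hard part'' you flag (the high-frequency tail of $L_{\hat f_N}\hat u_N-g$) is handled exactly as you anticipate, via $\|\,\cdot\,\|_{L^2}\lesssim\|S^{(J)}(\cdot)\|_{\R^{p_J}}+2^{-J(\alpha-1)}\|\,\cdot\,\|_{H^{\alpha-1}}$ and the multiplication-algebra property of $H^{\alpha-1}$.
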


Theorem \ref{thm-darcy-polytime} in particular implies Theorem \ref{darcy-main}. In fact, the explicit runtime exponent is given by 
\[ \kappa=\max \Big\{1+\frac{2d}{2(\alpha+1)+d}, \frac{3d}{2(\alpha+1)+d}\Big\} = 1+\frac{2d}{2(\alpha+1)+d}\]
Since $\alpha>d/2+1$, we have $\kappa<2$, yielding a uniformly sub-quadratic computational complexity. In particular, while the statistical convergence rates deteriorate with increasing physical dimension $d$, achieving the (in the forward problem, minimax-optimal) rates does not incur a super-quadratic computational cost.

\subsection{Polynomial-time Bayesian computation}\label{sec-darcy-MCMC}

We now state two results about polynomial-time computational guarantees for high-dimensional posterior sampling. The notion of polynomial-time computation used below differs from that
employed in Section~\ref{sec-high-dim}. The Bayesian literature typically measures computational complexity
in terms of the number of iterations required for an MCMC algorithm to reach a prescribed accuracy, under the assumption that each iteration possesses oracle access to evaluations of the forward map $\mathcal G$ (and potentially its derivatives). Accordingly, the polynomial-time guarantees in this section focus on iteration complexity rather than total arithmetic cost.

We briefly recall a standard Bayesian nonparametric formulation for the Darcy
model and related nonlinear inverse problems, as considered for instance in
\cite{NW22,nickl2023bayesian}. For $k\ge 1$, let $(e_k,\lambda_k)\in L^2(\mathcal O)\times (0,\infty)$ denote the $L^2(\mathcal O)$-orthonormal eigen-basis of the negative Dirichlet Laplacian on $\mathcal O$. Following \cite{NVW18,nickl2023bayesian,AW24}, we parameterize non-negative
diffusivities through truncated series expansions in this basis.
Specifically, for $\theta\in\R^D$ define
\[f_\theta := \Phi \circ F_\theta,~~~\text{with}~~~F_\theta (x) := \sum_{k=1}^D \theta_ke_k (x).\]
Here, $\Phi: \R\to (f_{min},\infty)$ is a smooth and strictly increasing `link function' satisfying $\Phi'>0$ and with bounded derivatives of all orders, $\sup_{x\in \R}|\Phi^{(k)}| <\infty$, as well as $\inf_{x\ge z}\Phi'(z)>0$ for all $z\in \R$. We place a Gaussian diagonal prior on $\theta$ of the form
\[\Pi =  N(0, N^{-\frac{d}{2\alpha+d}} \Sigma),\qquad \Sigma = \text{diag}(\lambda_j^{-\alpha}: 1\le j\le D)\in \R^{D\times D}.\]
Denoting the data by $Z^{(N)} = ((Y_i,X_i):1\le i\le N)$, the posterior density is, up to a multiplicative constant, given by
\begin{equation}\label{Darcy-post}
    \pi(\theta|Z^{(N)}) \propto \exp\Big(-\frac 12 \sum_{i=1}^N[Y_i-\mathcal G(f_\theta)(X_i)]^2)-\frac 12 \theta^T \Sigma^{-1}\theta\Big),~~~ \theta \in\R^D.
\end{equation}
Let $\theta_0\in \ell^2(\N),~\theta_{0}^{(D)}\in \R^D$ respectively denote the series coefficients of $F_0$ as well as their truncated counterpart;
\[ \theta_{0,k} = \langle F_0,e_k\rangle_{L^2},~~~~~~~\theta_{0}^{(D)} = (\theta_{0,k}:1\le k\le D). \]
Let $\tilde H^\alpha(\mathcal O)$ denote the spectrally defined Sobolev spaces arising from the Dirichlet Laplacian
\[ \tilde H^\alpha(\mathcal O) := \Big\{ f = \sum_{k=1}^\infty a_k e_k~~\text{for}~~\sum_{k=1}^\infty a_k^2 \lambda_k^{\alpha} <\infty \Big\}. \]
The following assumption encapsulates the  regularity assumptions required in Section 5.3 of \cite{nickl2023bayesian}, where the \textit{localized} sampling problem for the Darcy posterior distribution \eqref{Darcy-post} is studied using a `gradient stability' approach akin to \cite{NW22}.

\begin{ass}\label{polytime-MCMC-ass}
    Let $f_0= \Phi\circ F_0$ for some function $F_0 \in \tilde H^\alpha(\mathcal O)$ for some $\alpha \ge 22$, where $d\ge 3$ and $\mathcal O\subseteq \R^d$ is a bounded, smooth domain. For $N\ge 1$ let the dimension be chosen as $D=D_N:= a N^{\frac{d}{2\alpha+d}}$, for some $a>0$.
\end{ass}

The high smoothness requirement $\alpha\ge 22$ is inherited from
\cite{nickl2023bayesian} and is not intrinsic to our approach. In Theorem 5.3.5 of that reference, it is shown that under Assumption \ref{polytime-MCMC-ass}, the posterior density $\pi(\theta|Z^({N})$ satisfies the following two properties with probability tending to $1$ under the law of $Z^{(N)}$. Firstly, it is log-concave on a region of size $\{ \theta: \|\theta-\theta_{0,D}\|_{\R^D} \le r \}$ with $r=cD^{-8/d}$, with $c>0$ a small enough constant. Secondly, $\Pi(\cdot | Z^{(N)})$ concentrates the overwhelming majority of its mass in this region. Thus, a sufficient condition (asymptotically as $N\to\infty$) for successful warm-start initialisation is to find a point  $\vartheta_{init}\in \R^D$ in polynomial-time such that with probability tending to $1$ as $N\to\infty$, $\|\vartheta_{init} - \theta_{0,D}\|_{\R^D}\le D^{-8/d}/\log N.$

%Intuitively it is related to the `gradient stability' properties in the Darcy flow problems which determine the size of the region on which the posterior distribution is, with high probability, log-concave.
%it was proven that the posterior density $\pi(\theta|Z^{(N)})$ is locally log-concave on a region $\|\vartheta_{init} - \theta_{0,D}\|_{\R^D}\le D^{8/d}/\log D$ with high probability. Thus, an initialisation into this region will enable the use local log-concavity results to prove that 

\begin{thm}\label{darcy-initialisation}
    Consider the Darcy problem under Assumption \ref{polytime-MCMC-ass}. Then, there exists an estimator polynomial-time computable estimator $\vartheta_{init}=\vartheta_{init}(Z^{(N)})\in \R^D$ such that 
    $\P_{f_0}^N \big(\|\vartheta_{init} - \theta_{0,D}\|_{\R^D}\le D^{-8/d}/\log N\big)\to 1$ as $N\to\infty$.
\end{thm}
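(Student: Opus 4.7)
The plan is to show that the plug-in estimator $\hat f_N$ from Theorem~\ref{thm-darcy-polytime} is already accurate enough to serve, after composition with the inverse link $\Phi^{-1}$ and projection onto the Dirichlet eigen-basis $(e_k)_{k\ge 1}$, as the desired initializer $\vartheta_{init}$.

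First I would pull $\hat f_N$ back through $\Phi^{-1}$. Since $F_0\in \tilde H^\alpha(\mathcal O)$ with $\alpha\ge 22 > d/2$, Sobolev embedding yields $F_0\in C(\bar{\mathcal O})$, so $F_0$ is bounded and hence $f_0=\Phi\circ F_0$ takes values in a compact interval $[c_1,c_2]\subset (f_{min},\infty)$ on which $\Phi^{-1}$ is smooth with some Lipschitz constant $L<\infty$. Define the clipped, pulled-back estimator
\[
  \tilde f_N := (\hat f_N\wedge c_2)\vee c_1,\qquad \hat F_N := \Phi^{-1}(\tilde f_N).
\]
Since clipping towards $[c_1,c_2]\ni f_0(x)$ is a pointwise contraction towards $f_0$, we have $|\tilde f_N-f_0|\le|\hat f_N-f_0|$ pointwise, and Lipschitz continuity of $\Phi^{-1}$ on $[c_1,c_2]$ yields $\|\hat F_N-F_0\|_{L^2}\le L\|\hat f_N-f_0\|_{L^2}$.

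Next I would set $\vartheta_{init,k} := \langle \hat F_N,e_k\rangle_{L^2}$ for $k=1,\dots,D$. Given the sub-quadratic computation of $\hat f_N$ by Theorem~\ref{thm-darcy-polytime}, each coefficient is computable in polynomial time via numerical quadrature applied to the smooth composition $\Phi^{-1}(\tilde f_N)\cdot e_k$, and since $D=O(N^{d/(2\alpha+d)})$ the total cost remains polynomial in $N$. Bessel's inequality then gives
\[
  \|\vartheta_{init}-\theta_{0,D}\|_{\R^D}^2 = \sum_{k=1}^D \langle \hat F_N-F_0,e_k\rangle_{L^2}^2 \le \|\hat F_N-F_0\|_{L^2}^2 \le L^2\|\hat f_N-f_0\|_{L^2}^2.
\]

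It remains to compare rates. Theorem~\ref{thm-darcy-polytime} combined with Markov's inequality gives $\|\hat f_N-f_0\|_{L^2}\le N^{-\gamma_*}\log N$ with probability tending to one, where $\gamma_* := (\alpha-1)/(2(\alpha+1)+d)$, while the target warm-start radius is $D^{-8/d}/\log N \asymp N^{-8/(2\alpha+d)}/\log N$. It therefore suffices to verify the strict inequality $\gamma_* > 8/(2\alpha+d)$, which upon cross-multiplication reduces to $(\alpha-1)(2\alpha+d) > 8\,(2(\alpha+1)+d)$; this is comfortably satisfied for $\alpha\ge 22$ and $d\ge 3$ (e.g.\ at $\alpha=22,\,d=3$: $21\cdot 47 = 987 > 392 = 8\cdot 49$), and monotonicity in both $\alpha$ and $d$ is then immediate. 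The strict inequality absorbs both logarithmic factors for $N$ large enough, concluding the proof.

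The main conceptual obstacle is the handling of the link function $\Phi^{-1}$, which is defined only on $(f_{min},\infty)$, whereas the unconstrained plug-in estimator $\hat f_N$ may a priori drift outside this range. The clipping step resolves this cleanly, using only the a priori boundedness of $F_0$ through Sobolev embedding, and preserves the $L^2$ error up to a constant. Beyond that, the proof is essentially a comfortable rate comparison: the plug-in estimator's statistical rate $N^{-\gamma_*}$ is asymptotically much faster than the warm-start radius $N^{-8/(2\alpha+d)}$, with substantial slack for $\alpha\ge 22$, which in fact would allow significant relaxation of the smoothness hypothesis if the local log-concavity region from \cite{nickl2023bayesian} were established under weaker regularity.
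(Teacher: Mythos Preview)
Your overall strategy---take the plug-in estimator $\hat f_N$ from Theorem~\ref{thm-darcy-polytime}, pull back through $\Phi^{-1}$, project onto the Dirichlet eigenbasis, and verify that the rate $N^{-(\alpha-1)/(2(\alpha+1)+d)}$ beats the target radius $D^{-8/d}\asymp N^{-8/(2\alpha+d)}$---is exactly the paper's, and your rate comparison and Bessel step are correct (the paper also records the slack: $\alpha\ge 12$ already suffices for the inequality).

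The gap is in the clipping step. Your thresholds $c_1=\inf_x f_0(x)$ and $c_2=\sup_x f_0(x)$ depend on the unknown ground truth, so $\tilde f_N$ and hence $\vartheta_{init}$ are not estimators in the required sense (measurable functions of $Z^{(N)}$ alone). The upper threshold is harmless---you could drop it entirely, since the assumptions on $\Phi$ make $\Phi^{-1}$ Lipschitz on $[c,\infty)$ for any $c>f_{\min}$---but the lower threshold is genuine: under Assumption~\ref{polytime-MCMC-ass} one only knows $f_0>f_{\min}$ strictly with no a priori known gap, and $(\Phi^{-1})'$ blows up as $y\downarrow f_{\min}$, so you cannot simply replace $c_1$ by the known constant $f_{\min}$.

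The paper avoids clipping altogether by first upgrading to $L^\infty$-consistency. From the proof of Theorem~\ref{thm-darcy-polytime} one has $\|\hat f_N\|_{H^\alpha}=O_P(1)$; interpolation together with the embedding $H^{\alpha-1}\hookrightarrow L^\infty$ (valid since $\alpha>d/2+1$) then gives
\[
\|\hat f_N-f_0\|_\infty \lesssim \|\hat f_N-f_0\|_{H^{\alpha-1}} \lesssim \|\hat f_N-f_0\|_{L^2}^{1/\alpha}\|\hat f_N-f_0\|_{H^\alpha}^{(\alpha-1)/\alpha} \to 0
\]
in probability. Hence on an event of probability tending to one, $\inf_x\hat f_N\ge f_{\min}+\rho/2$ where $\rho:=\inf_x f_0(x)-f_{\min}>0$, and $\Phi^{-1}\circ\hat f_N$ is well-defined directly; off that event one sets $\hat F_N$ arbitrarily. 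The Lipschitz bound for $\Phi^{-1}$ then enters only in the analysis, not in the definition of the estimator. Your argument is easily repaired along these lines, and the clipping device becomes unnecessary.
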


The proof of Theorem \ref{darcy-initialisation} is given in Section \ref{sec-proof-init}. It turns out that $\alpha \ge 12$ is in fact sufficient to achieve a convergence rate of $D^{-8/d}/\log N$. As a consequence of Theorem \ref{darcy-initialisation} together with \cite[Theorem 5.3.6]{nickl2023bayesian}, it follows that the high-dimensional posterior mean is computable via MCMC within polynomially many iterations; this is the subject of our next result. For `Gaussian innovations' $\xi_k\sim^{i.i.d.} N(0,I_{D\times D})$, we define the following time-discretized Langevin-type MCMC algorithm on $\R^D$ with step-size $\delta >0$,
\begin{equation}\label{MCMC-ULA}
    \begin{cases}
        \vartheta_0 &= \vartheta_{init},\\
        \vartheta_{k+1}&= \vartheta_k + \delta [ \nabla \tilde \ell_N (\vartheta_k) + \Sigma^{-1}\vartheta_k] + \sqrt{2\gamma} \xi_{k+1}, ~~~k\ge 0,
    \end{cases}
\end{equation}
where $\tilde \ell_N$ is the globally log-concave `surrogate likelihood' function constructed in \cite[eq.~(5.8)]{nickl2023bayesian} which coincides with the true log-likelihood in a neighborhood of
$\vartheta_{init}$, see also \cite{NW22} for a detailed construction. Denoting the posterior mean by 
\[ \bar \theta_N =\bar \theta_N(Z^{(N)}):= \E[\theta | Z^{(N)}] = \int_{\R^D} \theta d\pi(\theta|Z^{(N)}) \in \R^D, \]
we have the following theorem.

\begin{thm}\label{thm-darcy-MCMC}
    Suppose Assumption \ref{polytime-MCMC-ass} holds. Fix $P>0$. For any $N\ge 1$ let $D=D_N$ be chosen as above and let $\rho= \rho_N \ge N^{-P}$ denote the desired precision level. Then, there is some suitably chosen step size $\delta=\delta(N,D,\eps)$ and exponent $\gamma>0$ such that after polynomially many $J= O(N^{\gamma})$ iterations, the ergodic average
    \[  \bar \theta^{MCMC}_J = \frac 1{J} \sum_{k=J_{in}}^{J+J_{in}} \vartheta_k \in \R^D\]
    of the MCMC chain $(\vartheta_k:k \ge 0)$ from \eqref{MCMC-ULA} satisfies that
    \[ \P \big(  \|\bar \theta^{MCMC}_J - \E[\theta | Z^{(N)}]\|_{\R^{D}} \le \rho_N \big) \xrightarrow{N\to\infty}1. \]
    Here $\P$ denotes the product probability of the law of the data $P^N_0$ and the randomness of the MCMC chain $(\vartheta_k:k \ge 0)$.
\end{thm}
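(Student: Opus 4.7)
The plan is to combine the new global initialization guarantee from Theorem~\ref{darcy-initialisation} with the local polynomial-time posterior computation machinery already developed in \cite[Section~5.3]{nickl2023bayesian} and \cite{NW22}. The proof decomposes into three error contributions: (a) the probability that $\vartheta_{init}$ fails to land in the warm-start region $\mathcal I$; (b) the MCMC bias/variance of $\bar\theta^{MCMC}_J$ when viewed as an estimator of the mean of the \emph{surrogate} posterior $\tilde\pi(\theta|Z^{(N)})\propto\exp\bigl(\tilde\ell_N(\theta)-\tfrac{1}{2}\theta^\top\Sigma^{-1}\theta\bigr)$; and (c) the discrepancy between the surrogate posterior mean and the true posterior mean $\bar\theta_N$. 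Under Assumption~\ref{polytime-MCMC-ass}, contribution (a) is controlled by Theorem~\ref{darcy-initialisation}, which provides an initialiser at distance at most $D^{-8/d}/\log N$ from $\theta_{0,D}$ with probability tending to one. On this event, $\vartheta_{init}$ lies in the region where $\tilde\ell_N=\ell_N$ and where $\tilde\pi$ is strongly log-concave.

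For contribution (b), I would invoke the construction of $\tilde\ell_N$ from \cite[eq.~(5.8)]{nickl2023bayesian}, which is globally Lipschitz-smooth and agrees with $\ell_N$ on a neighbourhood of $\theta_{0,D}$, so that $\tilde\pi$ is globally strongly log-concave with condition number growing only polynomially in $N$ and $D$. Given this, standard non-asymptotic ULA convergence results (as invoked in \cite[Theorem~5.3.6]{nickl2023bayesian}, ultimately relying on \cite{DM18}) yield, from the warm start $\vartheta_{init}$, a Wasserstein-$2$ contraction rate such that after $J=O(N^\gamma)$ iterations with a step size $\delta=\delta(N,D,\rho_N)$ of order an inverse polynomial in $N$, the law of $\vartheta_J$ is within $\rho_N/2$ of $\tilde\pi$ in $W_2$. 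The ergodic average $\bar\theta^{MCMC}_J$, via a variance bound of order $(\text{trace of covariance})/J$ combined with the contraction, then approximates $\int\theta\,d\tilde\pi(\theta|Z^{(N)})$ in $\R^D$-norm within $\rho_N/2$; burn-in $J_{in}$ of the same polynomial order absorbs initialisation bias. All polynomial exponents can be tracked explicitly as in \cite[Theorem~5.3.6]{nickl2023bayesian}, using that $D=D_N\lesssim N^{d/(2\alpha+d)}$ and that the precision target is polynomially bounded below by $N^{-P}$.

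Contribution (c) is the step where the main care is needed, and is the expected principal obstacle. Here I would show that $\|\int\theta\,d\tilde\pi - \bar\theta_N\|_{\R^D}\le \rho_N/2$ with probability tending to one. The argument goes as follows: by the posterior contraction result used in \cite[Theorem~5.3.5]{nickl2023bayesian}, under Assumption~\ref{polytime-MCMC-ass} the true posterior $\pi(\cdot|Z^{(N)})$ assigns mass at least $1-e^{-cN^{\eta}}$ to the log-concave region $\mathcal C\subseteq\mathcal I$ on which $\tilde\ell_N\equiv\ell_N$, for some $\eta>0$ and $c>0$. Consequently the normalising constants of $\pi$ and $\tilde\pi$ differ by a multiplicative factor $1+o(e^{-cN^{\eta}})$, and the two measures coincide on $\mathcal C$. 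The tails outside $\mathcal C$ contribute at most a sub-exponential-in-$N$ correction, which is absorbed in any polynomial precision $\rho_N\ge N^{-P}$ once one controls $\|\theta\|_{\R^D}$ moments under both measures using the Gaussian prior and the global strong log-concavity of $\tilde\pi$. Combining (a), (b), and (c) via a union bound and the triangle inequality yields the claimed convergence of $\bar\theta^{MCMC}_J$ to $\bar\theta_N$ up to precision $\rho_N$ in probability. The delicate point is making the comparison in (c) quantitative in $\R^D$-norm uniformly over the high-probability event; this requires choosing the burn-in $J_{in}$ and step-size $\delta$ compatible with the (polynomial) exponents arising from the posterior-tail bound.
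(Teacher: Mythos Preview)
Your proposal is correct and follows the same approach as the paper. The paper does not give an explicit proof of this theorem beyond the one-line remark preceding it, namely that the result follows ``as a consequence of Theorem~\ref{darcy-initialisation} together with \cite[Theorem~5.3.6]{nickl2023bayesian}''; your decomposition into contributions (a)--(c) is precisely an unpacking of that combination, with (b) and (c) already subsumed in the cited local-computation theorem and (a) supplied by the new initialization guarantee.
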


\subsection{Adaptation to unknown smoothness}\label{sec-adaptive}

We conclude this section by presenting an adaptive version of the plug-in M-estimator,
which adapts to the unknown smoothness of the ground truth.
As it turns out, the achievable convergence rates are governed by the smoothness of
$u_0=\mathcal G(f_0)$ rather than that of $f_0$ itself. Our next theorem is formulated in the infinite-dimensional setting; a corresponding
high-dimensional version could be developed analogously to the constructions in
Section~\ref{sec-high-dim}. For simplicity, and in order to remain compatible with the PDE theory employed
throughout, we restrict attention to integer-valued smoothness indices. We define the adaptive plug-in estimation procedure as follows. For integers $a,b\ge 0$, we write $[a:b]=\{a,a+1,\dots,b\}$.
\begin{enumerate}
    \item Form the preliminary estimator
    \begin{equation}\label{uhat-adaptive}
    (\hat \beta,\hat u_N)
    \in
    \argmin_{\beta \in [\beta_{\min}:\beta_{\max}],~u\in H^\beta(\mathcal O)}
    \frac{1}{N}\|Y-u(X)\|_{\R^N}^2
    + \mu_{N,\beta}^2\big(\|u\|_{H^\beta(\mathcal O)}^2 + A^2\big),
    \end{equation}
    for a suitable constant $A>0$, and with $\mu_{N,\beta}:= N^{-\frac{\beta}{2\beta+d}}$.
    \item With regularisation parameter $\nu_{N,\hat \beta}:= N^{-\frac{\hat\beta-2}{2\hat\beta+d}}$, compute the plug-in estimate
    \begin{equation}\label{fhat-adaptive}
    \hat f_N
    \in
    \argmin_{f\in H^{\alpha_{\min}}(\mathcal O)}
    \|L_f \hat u_N - g\|_{L^2(\mathcal O)}^2
    + \nu_{N,\hat\beta}^2 \|f\|_{H^{\alpha_{\min}}(\mathcal O)}^2.
    \end{equation}
\end{enumerate}
We emphasize that adaptation is achieved entirely through the preliminary estimator
$\hat u_N$ in Step~1, while the inverse step in \eqref{fhat-adaptive} is performed
over a fixed Sobolev space $H^{\alpha_{\min}}(\mathcal O)$, with adaptation entering
only through the data-driven choice of the regularization parameter $\nu_{N,\hat\beta}$.

\begin{thm}\label{thm-adaptive}
Let $(\hat u_N,\hat f_N)$ be the estimators defined in
\eqref{uhat-adaptive}–\eqref{fhat-adaptive}.
Let $\alpha_{\min}>d/2+1$ and suppose that $f_0\in H^{\alpha_{\min}}(\mathcal O)$.
Let $2\le \beta_{\min}<\beta_{\max}<\infty$ be integers, and define
\[
\beta_0 := \max\big\{ \beta \in [\beta_{\min}:\beta_{\max}] : u_0\in H^\beta(\mathcal O)\big\}.
\]
Then there exist constants $C_1,C_2>0$ such that
\begin{align}
\P_{f_0}^N(\hat\beta\ge\beta_0) &\xrightarrow{N\to\infty} 1, \label{alpha-consistency}\\
\P_{f_0}^N\big(\|\hat u_N-u_0\|_{L^2(\mathcal O)}
\ge C_1 N^{-\frac{\beta_0}{2\beta_0+d}}\big) &\xrightarrow{N\to\infty} 0, \label{adaptive-u-consistency}\\
\P_{f_0}^N\big(\|\hat f_N-f_0\|_{L^2(\mathcal O)}
\ge C_2 N^{-\frac{\beta_0-2}{2\beta_0+d}}\big) &\xrightarrow{N\to\infty} 0. \label{adaptive-f-consistency}
\end{align}
The constant $C_1$ may be chosen uniformly over classes
$\{f_0:\|\mathcal G(f_0)\|_{H^{\beta_0}(\mathcal O)}\le R\}$, while $C_2$ may be chosen
uniformly over
$\{f_0:\|\mathcal G(f_0)\|_{H^{\beta_0}(\mathcal O)}\le R,\ \|f_0\|_{C^1(\mathcal O)}\le R\}$.
\end{thm}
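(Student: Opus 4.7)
The plan is to decompose the argument into three steps: (i) a model-selection consistency result establishing $\hat\beta = \beta_0$ with probability tending to one (which in particular yields $\hat\beta \geq \beta_0$); (ii) a convergence-rate bound for the preliminary estimator $\hat u_N$ on this selection event; and (iii) propagation of the rate to $\hat f_N$ via the generalized stability estimate of Lemma \ref{darcy-generalized-stability} combined with Sobolev interpolation.

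For step (i), I would compare the minimal objective values $M_\beta := \min_{u \in H^\beta} J_{N,\beta}(u)$ across the finite integer grid $\beta \in [\beta_{\min}:\beta_{\max}]$, where $J_{N,\beta}$ denotes the objective appearing in \eqref{uhat-adaptive}. Plugging $u = u_0$ into $J_{N,\beta_0}$, which is admissible since $u_0 \in H^{\beta_0}$, yields the upper bound $M_{\beta_0} \leq N^{-1}\|\varepsilon\|^2 + (R^2 + A^2)\mu_{N,\beta_0}^2$. For $\beta < \beta_0$, the penalty floor $A^2\mu_{N,\beta}^2$ alone already exceeds $(R^2 + A^2)\mu_{N,\beta_0}^2$ once $A > R$ and $N$ is large, since $\mu_{N,\beta}^2/\mu_{N,\beta_0}^2 \to \infty$; combined with a uniform empirical-process control of the noise cross-terms (e.g.~via Talagrand's inequality indexed by Sobolev balls) this yields $M_\beta \geq N^{-1}\|\varepsilon\|^2 + A^2\mu_{N,\beta}^2(1 - o_P(1)) > M_{\beta_0}$ with high probability. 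For $\beta > \beta_0$, since $u_0 \notin H^\beta$, a bias--variance argument balancing the best $L^2$-approximation error $K^{-2\beta_0/d}$ against the penalty $\mu_{N,\beta}^2 K^{2(\beta-\beta_0)/d}$ over truncation levels $K$ forces the achievable rate at smoothness $\beta$ to be of order $N^{-2\beta_0/(2\beta+d)}$, strictly larger than $\mu_{N,\beta_0}^2 = N^{-2\beta_0/(2\beta_0+d)}$, whence $M_\beta > M_{\beta_0}$. This identifies $\hat\beta = \beta_0$ on an event of probability tending to one.

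For step (ii), on $\{\hat\beta = \beta_0\}$ the estimator $\hat u_N$ is the penalized LSE at smoothness $\beta_0$ for an $H^{\beta_0}$-regular truth, and a standard peeling/chaining argument for penalized LSEs on Sobolev ellipsoids yields $\|\hat u_N - u_0\|_{L^2} \lesssim N^{-\beta_0/(2\beta_0+d)}$ together with $\|\hat u_N\|_{H^{\beta_0}} \lesssim 1$, both with high probability. For step (iii), I apply Lemma \ref{darcy-generalized-stability} to the reference triplet $(u_0, f_0, g)$ and the comparator $(\hat u_N, \hat f_N, L_{\hat f_N}\hat u_N)$, whose hypotheses are verified via $f_0 \geq f_{\min}$, $g$ smooth and strictly positive, $u_0|_{\partial\mathcal O} = 0$, and $\|u_0\|_{C^2} \vee \|f_0\|_{C^1} \lesssim 1$ (from elliptic regularity and $\alpha_{\min} > d/2 + 1$), to obtain
\[
\|\hat f_N - f_0\|_{L^2} \lesssim \|\hat f_N\|_{C^1}\,\|\hat u_N - u_0\|_{H^2} + \|L_{\hat f_N}\hat u_N - g\|_{L^2}.
\]
Sobolev interpolation between $L^2$ and $H^{\beta_0}$ (using $\beta_0 \geq \beta_{\min} \geq 2$) gives $\|\hat u_N - u_0\|_{H^2} \lesssim \|\hat u_N-u_0\|_{L^2}^{1-2/\beta_0}\|\hat u_N-u_0\|_{H^{\beta_0}}^{2/\beta_0} \lesssim N^{-(\beta_0-2)/(2\beta_0+d)}$. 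Optimality of $\hat f_N$ in \eqref{fhat-adaptive}, tested against $f_0 \in H^{\alpha_{\min}}$, then bounds the PDE residual via
\[
\|L_{\hat f_N}\hat u_N - g\|_{L^2}^2 \leq \|L_{f_0}(\hat u_N - u_0)\|_{L^2}^2 + \nu_{N,\hat\beta}^2\,\|f_0\|_{H^{\alpha_{\min}}}^2 \lesssim \|\hat u_N - u_0\|_{H^2}^2 + \nu_{N,\beta_0}^2,
\]
where $\nu_{N,\hat\beta} \leq \nu_{N,\beta_0} = N^{-(\beta_0-2)/(2\beta_0+d)}$ follows from $\hat\beta \geq \beta_0$ and the monotonicity of $\beta \mapsto \nu_{N,\beta}$. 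The same inequality also yields $\|\hat f_N\|_{H^{\alpha_{\min}}} \lesssim 1$, hence $\|\hat f_N\|_{C^1} \lesssim 1$ by Sobolev embedding $H^{\alpha_{\min}} \hookrightarrow C^1$, completing the claimed rate.

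The main obstacle lies in step (i), and specifically in ruling out $\hat\beta > \beta_0$: this requires a sharp approximation-theoretic lower bound quantifying the irreducible bias of the $H^\beta$-penalized LSE when the truth lies only in $H^{\beta_0}$, uniform in $\beta$, so that this bias provably dominates the penalty reduction $\mu_{N,\beta}^2 < \mu_{N,\beta_0}^2$. The noise cross-terms must moreover be concentrated uniformly across the finite grid of $\beta$-values and across Sobolev balls of the pertinent radii, via a chaining/peeling argument. The choice of the constant $A$, required to be strictly larger than the a priori $H^{\beta_0}$-norm bound $R$, is exactly what drives the penalty-floor comparison at $\beta < \beta_0$.
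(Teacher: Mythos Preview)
Your overall structure is reasonable, but it diverges from the paper in a way that creates an unnecessary and genuinely hard obstacle. You aim to prove \emph{exact} model-selection consistency $\hat\beta=\beta_0$, whereas the theorem only asserts (and the paper only proves) $\hat\beta\ge\beta_0$. Your acknowledged ``main obstacle''---a sharp lower bound showing the $H^\beta$-penalized objective is strictly larger than the $H^{\beta_0}$ one when $\beta>\beta_0$ and $u_0\notin H^\beta$---is not established, and the paper never attempts it. Instead, the paper treats $(\beta,u)$ as a \emph{single} augmented parameter in the general M-estimation Theorem~\ref{thm-random-design}; the constant $A$ enters not as a penalty floor to compare against $R^2\mu_{N,\beta_0}^2$, but to force the entropy integral of the localized class $\mathcal U(\eta^*,R)$ to be bounded by $R^2\sqrt N/A$ uniformly in $R$, so that the rate condition $\sqrt N\delta_N^2\gtrsim\Psi(\delta_N)$ holds automatically. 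The case $\hat\beta<\beta_0$ is then ruled out by a short contradiction: on the high-probability event one has $\|\hat u_N\|_{H^{\hat\beta}}\lesssim\mu_{N,\beta_0}/\mu_{N,\hat\beta}\to 0$, forcing $\|\hat u_N\|_{L^2}\to 0$, which is incompatible with $\|\hat u_N-u_0\|_{L^2}\to 0$ and $\|u_0\|_{L^2}>0$.

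Your step~(iii) also depends essentially on $\hat\beta=\beta_0$: your interpolation $\|\hat u_N-u_0\|_{H^2}\lesssim\|\hat u_N-u_0\|_{L^2}^{1-2/\beta_0}\|\hat u_N-u_0\|_{H^{\beta_0}}^{2/\beta_0}$ requires $\|\hat u_N-u_0\|_{H^{\beta_0}}=O(1)$, but when $\hat\beta>\beta_0$ one only controls $\|\hat u_N\|_{H^{\hat\beta}}\lesssim\mu_{N,\beta_0}/\mu_{N,\hat\beta}$, and the naive bound via $\|\cdot\|_{H^{\beta_0}}\le\|\cdot\|_{H^{\hat\beta}}$ gives a rate strictly worse than $\nu_{N,\beta_0}$ (a short calculation shows the exponent deficit is exactly $(\hat\beta-\beta_0)d/(\beta_0(2\hat\beta+d)(2\beta_0+d))>0$). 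The paper handles this by introducing a wavelet truncation $u_{0,J}$ at resolution $2^J\simeq N^{1/(2\beta_0+d)}$: one has $\|u_{0,J}\|_{H^{\hat\beta}}\lesssim\mu_{N,\beta_0}/\mu_{N,\hat\beta}$ (even though $u_0\notin H^{\hat\beta}$), which permits interpolating $\|\hat u_N-u_{0,J}\|_{H^2}$ between $L^2$ and $H^{\hat\beta}$, recovering the correct $\nu_{N,\beta_0}$ rate. Your bound on the PDE residual and the control $\|\hat f_N\|_{C^1}\lesssim 1$ via $H^{\alpha_{\min}}\hookrightarrow C^1$ are correct and match the paper. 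In short: if you drop the attempt to exclude $\hat\beta>\beta_0$ and instead interpolate through a truncation of $u_0$ in $H^{\hat\beta}$, the argument closes; as written, step~(i) is incomplete and step~(iii) would fail on the event $\hat\beta>\beta_0$.
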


The proof of Theorem \ref{thm-adaptive} can be found in Section \ref{sec-adaptive-proof}. Theorem~\ref{thm-adaptive} shows that the procedure
\eqref{uhat-adaptive}–\eqref{fhat-adaptive} adapts to the unknown smoothness of
$u_0=\mathcal G(f_0)$ rather than that of $f_0$. The adaptive procedure is formulated in terms of nonparametric M-estimation,
primarily for ease of exposition.
Alternative adaptation strategies, such as Lepskii-type methods \cite{lepskii1991},
could potentially yield comparable results. An advantage of the present approach is that Step~1 simultaneously produces an estimate of the smoothness index, leading to adaptation with respect to Sobolev losses $H^\gamma(\mathcal O)$ for all $\gamma\in[0,\beta_0]$ in particular for the $H^2(\mathcal O)$-norm relevant in the generalized stability estimates. To the best of our knowledge, the only previous adaptive rates in the Darcy model
were those \cite{GN19}, where adaptation is achieved for the forward problem but not for the diffusivity $f_0$, see Theorems 8-10 in \cite{GN19}. Note that adaptation is possible here without incurring additional logarithmic factors in the convergence rate, due to the $L^2$-loss function employed here, cf.~Chapter 8 in \cite{GN16}.

\subsection{Results for the Schr\"odinger model}\label{sec-schroe}

We now turn to our second main PDE example, an inverse problem for the steady-state
Schr\"odinger equation studied, for instance, in \cite{N20,NW22,AW24}.
The main result of this section is Theorem~\ref{Schroed-rate}, which establishes
minimax-optimal convergence rates for both the PDE-penalized and plug-in M-estimators. The forward map $\mathcal G(f)$ is defined through the unique solution $u=u_f$ of
\begin{equation}\label{schroe}
	\begin{cases}
		\frac 12 \Delta u - f u = 0 & \textnormal{on } \mathcal O,\\
		u=g & \textnormal{on } \partial\mathcal O,
	\end{cases}
\end{equation}
where $g\in C^\infty(\partial\mathcal O)$ is a known strictly positive boundary datum.
Whenever $f\ge 0$ and $f\in C^\eta(\mathcal O)$ for some $\eta>0$, elliptic theory yields a unique
classical solution $u\in C^{2+\eta}(\mathcal O)\cap C(\overline{\mathcal O})$.
See \cite{CZ95} for a comprehensive PDE theory of Schr\"odinger-type equations and \cite{BU10}
for the occurrence of this problem in photoacoustic tomography (PAT).
For $f:\mathcal O\to\R$, write
\[
L_f u := \frac12 \Delta u - f u .
\]
We consider the following two estimation procedures, which are direct analogues of those
introduced for the Darcy model. Define, for $(u,f)\in H^{\alpha+2}(\mathcal O)\times H^\alpha(\mathcal O)$, the objective
\begin{equation}\label{eq:joint-criterion}
J(u,f)
:= \frac{1}{N}\sum_{i=1}^{N}\bigl(Y_i-u(X_i)\bigr)^2
 + \lambda_N^2 \bigl\|L_f u - g\bigr\|_{L^2(\mathcal O)}^2
 + \mu_N^2\Bigl(\|u\|_{H^{\alpha+2}(\mathcal O)}^2 + \|f\|_{H^\alpha(\mathcal O)}^2\Bigr).
\end{equation}
The PDE-penalized M-estimator is defined as any measurable minimizer
\begin{equation}\label{eq:joint-estimator}
(\hat u_N,\hat f_N)
\in \argmin_{(u,f)\in H^{\alpha+1}(\mathcal O)\times H^\alpha(\mathcal O)} J(u,f),
\end{equation}
with regularization parameters
\[
\lambda_N = N^{-\frac{2}{2(\alpha+2)+d}},
\qquad
\mu_N = N^{-\frac{\alpha+2}{2(\alpha+2)+d}}.
\]
Alternatively, the plug-in M-estimator is defined via the two-stage procedure
\begin{equation}\label{eq:plugin-u}
\hat u_N
= \argmin_{u\in H^{\alpha+1}(\mathcal O)}
\left\{
\frac{1}{N}\sum_{i=1}^{N}\bigl(Y_i-u(X_i)\bigr)^2
+ \mu_N^2\|u\|_{H^{\alpha+1}(\mathcal O)}^2
\right\},
\end{equation}
followed by
\begin{equation}\label{eq:plugin-f}
\hat f_N
= \argmin_{f\in H^{\alpha}(\mathcal O)}
\left\{
\bigl\|L_f\hat u_N - g\bigr\|_{L^2(\mathcal O)}^2
+ \nu_N^2\|f\|_{H^{\alpha}(\mathcal O)}^2
\right\},
\end{equation}
where $\mu_N = N^{-\frac{\alpha+2}{2(\alpha+2)+d}}$, $\nu_N = N^{-\frac{\alpha}{2(\alpha+2)+d}}$.

To avoid excessive technicalities, we derive the following convergence result in high probability rather than mean squared error.

\begin{thm}\label{Schroed-rate}
Let $d\ge 1$ and suppose $\mathcal O\subseteq \R^d$ is a smooth, bounded domain.
Let $\alpha> d/2+1$ and $\alpha\ge 3$, and fix constants $f_{\min}\ge 0$ and $R>0$.
Suppose that the data \eqref{PDE-data} arise from some
$f_0\in \mathcal F(\alpha,f_{\min},R)$, and let $(\hat u_N,\hat f_N)$ denote
either the PDE-penalized M-estimator defined in
\eqref{eq:joint-estimator} or the plug-in M-estimator defined in
\eqref{eq:plugin-u}–\eqref{eq:plugin-f}.
There exists $C>0$ depending only on $\mathcal O,d,\alpha,f_{\min},R$ such that
\begin{align}
\P_{f_0}^N\Big(
\|\hat u_N - \mathcal G(f_0)\|_{L^2(\mathcal O)}^2
\ge C N^{-\frac{2(\alpha+2)}{2(\alpha+2)+d}}
\Big) &\xrightarrow{N\to\infty} 0, \label{Schroed-forward}\\
\P_{f_0}^N\Big(
\|\hat f_N - f_0\|_{L^2(\mathcal O)}^2
\ge C N^{-\frac{2\alpha}{2(\alpha+2)+d}}
\Big) &\xrightarrow{N\to\infty} 0.
\end{align}
\end{thm}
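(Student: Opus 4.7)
The plan is to mirror the strategy used in the proof of Theorem \ref{Darcy-rate}, adapted to the Schr\"odinger operator $L_f u = \tfrac12 \Delta u - fu$. The three ingredients are (i) a generalized stability estimate for the Schr\"odinger problem (Proposition \ref{schroed-generalized-stability}), (ii) a high-probability concentration inequality for the regression of the forward solution $u_0 = \mathcal{G}(f_0)$, and (iii) Sobolev interpolation to pass from $L^2$-type bounds on $\hat u_N - u_0$ to the $H^2$-type bounds demanded by stability. By standard elliptic regularity for Schr\"odinger, $f_0 \in H^\alpha(\mathcal O)$ and $g \in C^\infty(\partial \mathcal O)$ imply $u_0 \in H^{\alpha+2}(\mathcal O)$, justifying the smoothness index $\alpha+2$ in the regularizer for $u$.

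First I would establish the forward rate. For the plug-in estimator, $\hat u_N$ is a standard penalized least-squares estimator over the Sobolev ball induced by $\|\cdot\|_{H^{\alpha+2}}$, and minimax theory for nonparametric regression (e.g.\ along the lines of \cite{NVW18, GN19, nickl2023bayesian}) yields, with probability tending to $1$, both $\|\hat u_N - u_0\|_{L^2}^2 \lesssim N^{-2(\alpha+2)/(2(\alpha+2)+d)}$ and $\|\hat u_N\|_{H^{\alpha+2}} \lesssim 1$. For the joint PDE-penalized estimator, I would start from the basic inequality $J(\hat u_N,\hat f_N) \le J(u_0,f_0)$; since $L_{f_0} u_0 - g$ vanishes (or contributes only at the optimal parametric rate), the PDE term on the right is negligible, and a peeling/chaining argument of empirical process type over the joint space $H^{\alpha+2}\times H^\alpha$ delivers the same forward rate together with $\|\hat u_N\|_{H^{\alpha+2}}^2 + \|\hat f_N\|_{H^\alpha}^2 \lesssim 1$ and $\|L_{\hat f_N}\hat u_N - g\|_{L^2}^2 \lesssim \mu_N^2/\lambda_N^2 \sim \nu_N^2$.

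Next I would apply Sobolev interpolation: combining $\|\hat u_N - u_0\|_{L^2} \lesssim N^{-(\alpha+2)/(2(\alpha+2)+d)}$ with the uniform $H^{\alpha+2}$ bound yields
\[
\|\hat u_N - u_0\|_{H^2} \lesssim \|\hat u_N - u_0\|_{L^2}^{\alpha/(\alpha+2)} \|\hat u_N - u_0\|_{H^{\alpha+2}}^{2/(\alpha+2)} \lesssim N^{-\alpha/(2(\alpha+2)+d)}.
\]
The generalized stability estimate (Proposition \ref{schroed-generalized-stability}) for the reference solution $(u_0,f_0)$ will yield
\[
\|\hat f_N - f_0\|_{L^2} \lesssim \|\hat u_N - u_0\|_{H^2} + \|L_{\hat f_N}\hat u_N - L_{f_0} u_0\|_{L^2}.
\]
For the plug-in version the second term is $\lesssim \nu_N$ by comparing the value of the second minimization to that at $f_0$; for the joint version it is $\lesssim \mu_N/\lambda_N \sim \nu_N$ by construction. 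Both rates match $N^{-\alpha/(2(\alpha+2)+d)}$, yielding the stated squared bound.

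The hard part will be to set up the empirical process control uniformly over the joint space for the PDE-penalized estimator and to verify the hypotheses of Proposition \ref{schroed-generalized-stability} for the reference pair. In particular, one must ensure that $u_0$ is bounded away from zero on $\overline{\mathcal O}$ (so that one can effectively invert multiplication by $u_0$ in the identity $(f-f_0)u_0 = \tfrac12\Delta(u-u_0) - f(u-u_0) - (L_f u - L_{f_0} u_0)$); this follows from the maximum principle since $g>0$ and $f_0\ge 0$. One also needs uniform $C^1$/$C^2$ bounds on $f_0$ and $u_0$ of the type imposed in the Darcy analog, which follow from the Sobolev embedding $H^\alpha \hookrightarrow C^{1+\eta}$ for $\alpha > d/2+1$ together with the regularity gain of the Schr\"odinger equation. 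Finally, the convergence in probability in \eqref{Schroed-forward} (as opposed to expectation) lets us localize to the high-probability event on which $\|\hat u_N\|_{H^{\alpha+2}}$ and $\|\hat f_N\|_{H^\alpha}$ are uniformly bounded, which simplifies the empirical process step considerably.
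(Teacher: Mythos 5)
Your proposal is correct and follows essentially the same route as the paper: concentration for the penalized regression step (the paper uses its Theorem \ref{thm-random-design} plus the empirical-to-$L^2$ norm transfer of Lemma \ref{lem-emp-norm}, where you invoke standard minimax/penalized least-squares theory), Sobolev interpolation to get the $H^2$ bound, the basic inequality for the PDE-penalty term, and generalized stability to conclude the inverse rate. The only notable deviation is in the stability step: your explicit identity $(\hat f_N-f_0)u_0=\tfrac12\Delta(\hat u_N-u_0)-\hat f_N(\hat u_N-u_0)-(L_{\hat f_N}\hat u_N-L_{f_0}u_0)$ only requires $u_0$ bounded below (plus $\|\hat f_N\|_\infty\lesssim 1$), whereas the paper's Lemma \ref{schroed-generalized-stability} also requires $\inf_{\mathcal O}\hat u_N\ge c_{\min}$ and therefore additionally verifies sup-norm consistency of $\hat u_N$ together with the Feynman--Kac lower bound $\inf u_0>0$ — your variant harmlessly sidesteps that extra verification.
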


The proof is given in Section \ref{sec-schroed-proof}. Like for the Darcy model, a key ingredient to the proof is that the Schr\"odinger problem satisfies the following generalized stability estimate.

\begin{restatable}{lem}{schroedgeneralizedstability}
\label{schroed-generalized-stability}
Let $\mathcal O\subseteq \R^d$ be a smooth, bounded domain and let $c_{\min}>0$.
Suppose that for $i\in\{1,2\}$ the functions
$u_i\in H^2(\mathcal O)$, $f_i\in C(\mathcal O)$ and $h_i\in L^2(\mathcal O)$ satisfy
\[
L_{f_1}u_1=h_1
\qquad\text{and}\qquad
L_{f_2}u_2=h_2
\quad\text{in }\mathcal O.
\]
Assume moreover that $(u_1,f_1,h_1)$ satisfies $u_1\in C^2(\mathcal O)$ and $h_1\in L^\infty(\mathcal O)$, and that
\[
\inf_{x\in\mathcal O,\, i=1,2} u_i(x) \ge c_{\min}.
\]
Then there exists a constant $C>0$, depending only on $c_{\min}$ and upper bounds on
$\|u_1\|_{C^2(\mathcal O)}$ and $\|h_1\|_{L^\infty(\mathcal O)}$, such that
\[
\|f_1-f_2\|_{L^2(\mathcal O)}
\le
C\big(\|u_1-u_2\|_{H^2(\mathcal O)} + \|h_1-h_2\|_{L^2(\mathcal O)}\big).
\]
\end{restatable}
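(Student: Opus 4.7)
The plan is to derive an explicit pointwise formula for $f_1 - f_2$ from the two Schr\"odinger equations and then close the estimate using the uniform lower bound $u_i \ge c_{\min}$ together with a preliminary $L^\infty$ bound on $f_1$. The approach is structurally simpler than the Darcy case of Lemma~\ref{darcy-generalized-stability}, since the operator $L_f u = \tfrac12 \Delta u - f u$ depends only algebraically (not differentially) on $f$.

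First I would rewrite the equations as $f_i u_i = \tfrac12 \Delta u_i - h_i$ for $i = 1, 2$, subtract, and use the algebraic identity $f_1 u_1 - f_2 u_2 = (f_1-f_2)u_2 + f_1(u_1 - u_2)$ to obtain
\[
(f_1 - f_2)\, u_2 \;=\; \tfrac{1}{2}\,\Delta(u_1-u_2) \;-\; (h_1 - h_2) \;-\; f_1\,(u_1 - u_2).
\]
Since $u_2 \ge c_{\min} > 0$ on $\mathcal O$, I can divide by $u_2$ pointwise and take $L^2$ norms to get
\[
\|f_1 - f_2\|_{L^2(\mathcal O)}
\;\le\; \frac{1}{c_{\min}}\Big(\tfrac{1}{2}\|\Delta(u_1 - u_2)\|_{L^2(\mathcal O)} + \|h_1 - h_2\|_{L^2(\mathcal O)} + \|f_1\|_{L^\infty(\mathcal O)}\,\|u_1 - u_2\|_{L^2(\mathcal O)}\Big).
\]

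Second, I would control $\|f_1\|_{L^\infty(\mathcal O)}$. Since $f_1 = (\tfrac12 \Delta u_1 - h_1)/u_1$ pointwise, and by hypothesis $u_1 \ge c_{\min}$, $u_1 \in C^2(\mathcal O)$, and $h_1 \in L^\infty(\mathcal O)$, I obtain
\[
\|f_1\|_{L^\infty(\mathcal O)} \;\le\; \frac{1}{c_{\min}}\Big(\tfrac12 \|u_1\|_{C^2(\mathcal O)} + \|h_1\|_{L^\infty(\mathcal O)}\Big),
\]
which depends only on the constants allowed in the statement. Combining this with the previous display and bounding $\|\Delta (u_1-u_2)\|_{L^2}, \|u_1-u_2\|_{L^2} \le \|u_1 - u_2\|_{H^2(\mathcal O)}$ yields the desired inequality
\[
\|f_1 - f_2\|_{L^2(\mathcal O)} \;\le\; C\bigl(\|u_1 - u_2\|_{H^2(\mathcal O)} + \|h_1 - h_2\|_{L^2(\mathcal O)}\bigr),
\]
with $C$ depending only on $c_{\min}$, $\|u_1\|_{C^2(\mathcal O)}$ and $\|h_1\|_{L^\infty(\mathcal O)}$, as claimed.

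There is no substantial obstacle here: the algebraic dependence of $L_f$ on $f$ makes the step of isolating $f_1 - f_2$ elementary, and no integration by parts or higher-order elliptic regularity (as in the Darcy case) is needed. The only minor subtlety is that the asymmetry of the hypotheses---regularity imposed on $(u_1, f_1, h_1)$ but not $(u_2, f_2, h_2)$---is used precisely to bound $\|f_1\|_{L^\infty}$; the lower bound $u_2 \ge c_{\min}$ is the only regularity needed on the second triple beyond $u_2 \in H^2(\mathcal O)$.
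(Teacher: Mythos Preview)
Your proof is correct and essentially coincides with the paper's argument: both exploit that $f_i = (\tfrac12\Delta u_i - h_i)/u_i$ and the uniform lower bound $u_i \ge c_{\min}$ to control $\|f_1-f_2\|_{L^2}$ by $\|u_1-u_2\|_{H^2}$ and $\|h_1-h_2\|_{L^2}$. The paper expands the difference of quotients directly into four terms (using the Lipschitz bound on $z\mapsto 1/z$), whereas you subtract the equations first and isolate $(f_1-f_2)u_2$, yielding three terms and bypassing the reciprocal Lipschitz step---a slightly cleaner but equivalent rearrangement.
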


Using the same techniques as in the case of Darcy flow, one could also derive analogous results for a high-dimensional discretization of the Schr\"odinger problem computable in sub-quadratic time, as well as results on adaptive estimation. Since these results would closely parallel the results in Sections \ref{sec-high-dim}-\ref{sec-adaptive}, we do not pursue this extension here.

\section{An abstract theorem for generalized M-estimators}\label{sec:general-M}

We now formulate a general result on convergence rates of `augmented M-estimators' building on classical ideas developed in \cite{V00}, used throughout the proofs. The formulation is intentionally
slightly technical in order to allow maximum flexibility in its application to the
various estimation procedures considered in this paper.

Let $d\ge 1$ and let $\mathcal O\subseteq \R^d$ be a Borel measurable, non-empty subset. Let $X_i\in \mathcal O$ be the design points at which we are given regression measurements
\[ Y_i = u_0(X_i) +\eps_i ,~~~~i= 1,...,N, \]
where $u_0:\mathcal O\to \R$ is some regression function and $\eps_i\in \R$ are independent Gaussian measurement errors with variance at most $\sigma^2>0$,
\[ \eps_i \sim N(0,\sigma_i^2),~~~ \sup_{1\le i\le N} \sigma_i \le \sigma. \]
The result in this section will not make any assumption regarding whether the $X_i$ are deterministic or random. %At the expense of additional technicalities, one could generalize our findings to the case of general subgaussian noise distributions, but we do not pursue this further here.

We consider M-estimators in two variables $\eta\in H$ and $\theta \in\Theta$, where $\eta\in H$ should be interpreted as the variable which parameterizes the regression functions, and $\theta\in \Theta$ is another statistical parameter of interest. Throughout we fix some parameterization of regression functions on $\mathcal O$, 
\[ \eta \mapsto u_\eta~~\text{for all}~~\eta\in H,~~u_\eta: \mathcal O\to \R. \]
We will consider the minimization of functionals of the form
\[ J: H\times \Theta \to \R, ~~~~J(\eta, \theta) = \frac 1N \sum_{i=1}^n (Y_i- u_\eta(X_i))^2 + \mathcal R(\eta,\theta), \]
where $\mathcal R: H\times \Theta \to [0,\infty)$ is some non-negative penalty term, which be arbitrary in the context of this section. Throughout, let us assume that minimizers $(\hat \eta, \hat \theta)$ of $J$ exist, and that there exists a choice of $(\hat \eta, \hat \theta)$ which is a measurable function of the data $(Y_i,X_i)_{i=1}^N$, which in the settings relevant to our theorems above, one may verify using well-known measurable selection theorems, e.g.~\cite{brown1973measurable}. We denote the empirical norm by
\[ \frac 1N \sum_{i=1}^N h(X_i)^2 = \|h\|_{N}^2,~~~h:\mathcal O\to \R. \]
%Note that for a.e.~ defined functions $h\in L^2(\mathcal O)$, $\|h\|_{N}$ is still a well-defined random variable for $X_i\sim \text{Unif}(\mathcal O)$.
For any $(\eta,\theta)\in H\times \Theta$ and function $u:\mathcal O\to \R$, we define the following `discrepancy functional'
\[\tau^2((\eta,\theta), u):=\|u_\eta-u\|^2_{N}+\mathcal R(\eta,\theta). \]
Using this discrepancy functional, we introduce the following localized classes of regression functions. For any `centering' element $\eta^*:\mathcal O\to \R$ and $R>0$, let
\[\mathcal U(\eta^*,R) :=\big\{ u= u_\eta~~\text{for some}~~(\eta,\theta)\in H\times \Theta~~\text{with}~~\tau^2((\eta, \theta),u_{\eta^*})\le R^2\big \}.\]
Intuitively, $\mathcal U(\eta^*,R)$ is the set of all the regression functions $u_\eta$ represented by parameter pairs $(\eta,\theta)$ which are `close to' the reference point $\eta^*$, as measured by the $\tau^2$ functional. We may think of $\eta^*$ as the `best approximation' of the ground truth. The metric entropy integral of $\mathcal U(\eta^*,R)$ with respect to the empirical norm is denoted by
\begin{align*}
\mathcal I(\eta^*,R) :=&\sigma R+ \int_{0}^{\sigma R}H^{1/2}(\rho,\mathcal U(\eta^*,R),\sigma\|\cdot\|_{N})d\rho.
\end{align*}

\begin{thm}\label{thm-random-design} For $(\eta^*,\theta^*) \in H\times \Theta$ an arbitrary element, suppose that
\begin{align}\label{entropy-ineq}
    \Psi(R)\ge \mathcal I(\eta^*,R)
\end{align}is an upper bound such that $R\mapsto \Psi(R)/R^2$ is non-increasing. There exists a universal constant $C>0$ such that for all such $(\eta^*,\theta^*)$ and $\Psi(\cdot)$, any  $N\ge 1,\delta_N>0$ with 
\[ \sqrt N \delta_N^2 \ge C\Psi(\delta_N), \]
and any $R\ge \delta_N$, it holds that
	\begin{equation}\label{conc-random-design}
	\P_{0}^N   \big( \tau^2((\hat \eta,\hat \theta),u_0)\ge 2\big(\tau^2 (( \eta^*,\theta^*),u_0)+R^2)  \big) \le C\exp(-NR^2/C).
	\end{equation}
    Moreover, we have the following moment bound for all $k\ge 1$,
    \begin{equation}\label{MSE-random-design}
        \E_{0}^N \big[ \tau^{2k}((\hat \eta,\hat \theta),u_0) \big] \le C_k \Big(
\tau^{2k}((\eta^*,\theta^*),u_0)
+ \delta_N^{2k}
+ \frac{\delta_N^{2k-2}\sigma^2}{N}\Big),~~~\text{some}~C_k>0.
    \end{equation}
\end{thm}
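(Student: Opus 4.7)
The plan is to follow the classical ``basic inequality $+$ chaining $+$ peeling'' strategy from empirical process theory (cf.\ van der Vaart \cite{V00} and van de Geer), adapted to accommodate the penalty term $\mathcal R$. The tail bound \eqref{conc-random-design} is obtained from three ingredients: a basic inequality reducing the deviation of $(\hat\eta,\hat\theta)$ to a localized empirical-process supremum, a Dudley chaining bound combined with Gaussian concentration for this supremum, and a peeling across dyadic shells in the localization radius. The moment bound \eqref{MSE-random-design} will then follow by integrating the tail bound.

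First, from the optimality $J(\hat\eta,\hat\theta)\le J(\eta^*,\theta^*)$ and the identity $Y_i=u_0(X_i)+\eps_i$, expanding the squared residuals and cancelling common terms yields
\[
\tau^2\bigl((\hat\eta,\hat\theta),u_0\bigr)\le \tau^2\bigl((\eta^*,\theta^*),u_0\bigr)+2Z_N(\hat v),
\qquad Z_N(v):=\tfrac{1}{N}\sum_{i=1}^N\eps_i v(X_i),
\]
where $\hat v:=u_{\hat\eta}-u_{\eta^*}$. Two applications of the triangle inequality in $\|\cdot\|_N$, together with $\mathcal R\ge 0$, give
$\tau^2((\hat\eta,\hat\theta),u_{\eta^*})\le 2\tau^2((\hat\eta,\hat\theta),u_0)+2\tau^2((\eta^*,\theta^*),u_0)$, so that $u_{\hat\eta}\in\mathcal U(\eta^*,S)$ with $S^2:=\tau^2((\hat\eta,\hat\theta),u_{\eta^*})$, and hence $Z_N(\hat v)\le \sup_{u\in\mathcal U(\eta^*,S)}|Z_N(u-u_{\eta^*})|$.

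Conditional on $(X_i)_{i=1}^N$, the process $u\mapsto Z_N(u-u_{\eta^*})$ is Gaussian with sub-Gaussian parameter $\sigma N^{-1/2}\|u-u_{\eta^*}\|_N$, and the $\|\cdot\|_N$-diameter of $\mathcal U(\eta^*,R)$ is at most $R$. Combining Dudley's entropy inequality with the Borell--TIS concentration bound therefore yields, for all $R,t>0$,
\[
\P\Bigl(\sup_{u\in\mathcal U(\eta^*,R)}\!|Z_N(u-u_{\eta^*})|\ge C N^{-1/2}\mathcal I(\eta^*,R)+t\Bigr)\le 2\exp\!\Bigl(-\tfrac{Nt^2}{C\sigma^2R^2}\Bigr).
\]
Using $\mathcal I(\eta^*,R)\le \Psi(R)$, monotonicity of $R\mapsto \Psi(R)/R^2$, and the hypothesis $\sqrt N\delta_N^2\ge C\Psi(\delta_N)$, one concludes $N^{-1/2}\mathcal I(\eta^*,R)\le R^2/C$ for every $R\ge\delta_N$. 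Fix such $R$ and set $A_R:=\{\tau^2((\hat\eta,\hat\theta),u_0)\ge 2\tau^2((\eta^*,\theta^*),u_0)+2R^2\}$. On $A_R$ the basic inequality forces $2Z_N(\hat v)\gtrsim R^2+\tau^2((\hat\eta,\hat\theta),u_0)$, while the inequality for $S^2$ above combined with $A_R$ gives $S^2\le 3\tau^2((\hat\eta,\hat\theta),u_0)$ and $S\ge cR$. Partitioning $A_R$ into dyadic shells $\{S\in[2^jR,2^{j+1}R]\}$ for $j\ge 0$, applying the concentration estimate with radius $2^{j+1}R$ and threshold $t\asymp 4^jR^2$, and summing over $j$, produces $\P(A_R)\le C\exp(-NR^2/C)$, which is \eqref{conc-random-design}.

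The moment bound follows by writing $\tau^{2k}\le 2^k\tau^{2k}_{*}+2^kT^k$ with $T:=(\tau^2((\hat\eta,\hat\theta),u_0)-2\tau^2((\eta^*,\theta^*),u_0))_+$ and computing $\E[T^k]=\int_0^\infty ks^{k-1}\P(T\ge s)\,ds$: the trivial bound $\P(T\ge s)\le 1$ on $[0,2\delta_N^2]$ contributes the $\delta_N^{2k}$ term, while the Gaussian tail on $[2\delta_N^2,\infty)$ yields a residual of order $\sigma^{2k}/N^k\lesssim \delta_N^{2k-2}\sigma^2/N$ via a standard Gamma-function computation (using $\delta_N^2\gtrsim \sigma^2/N$, a consequence of the hypothesis on $\delta_N$). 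The main technical obstacle lies in the peeling step: because the penalty $\mathcal R$ appears in both $\tau^2(\cdot,u_0)$ and $\tau^2(\cdot,u_{\eta^*})$, one must carefully translate the hypothesis $A_R$, phrased in terms of $\tau^2((\hat\eta,\hat\theta),u_0)$, into a quantitative relation between the localization radius $S$ of the supremum and the lower bound that $Z_N(\hat v)$ must satisfy. A secondary subtlety is that the chaining estimate is stated conditionally on the design, but the resulting tail is uniform in $(X_i)_{i=1}^N$ and transfers to the unconditional probability without modification.
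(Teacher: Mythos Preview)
Your proposal is correct and follows essentially the same route as the paper's proof: the basic inequality from optimality, the triangle-type inequalities for $\tau^2$ relating the centers $u_0$ and $u_{\eta^*}$, Dudley's bound plus Borell--Sudakov--Tsirelson conditionally on the design, peeling over dyadic shells in $\tau((\hat\eta,\hat\theta),u_{\eta^*})$, and then integrating the tail for the moment bound. The only omission is that you state the inequality $S^2\le 2\tau^2((\hat\eta,\hat\theta),u_0)+2\tau^2((\eta^*,\theta^*),u_0)$ but invoke $S\ge cR$ without recording the reverse direction $\tau^2((\hat\eta,\hat\theta),u_{\eta^*})\ge \tfrac12\tau^2((\hat\eta,\hat\theta),u_0)-\tau^2((\eta^*,\theta^*),u_0)$; the paper makes both inequalities explicit, but the missing one follows by the identical argument and your subsequent use of it is correct.
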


The proof, presented in Section \ref{appendix-M-estimationproofs}, generalizes classical empirical process arguments for penalized least squares estimators, in particular~\cite{V00}, to the present augmented parameter setting. The key idea is a localization argument based on the discrepancy functional $\tau^2$,
combined with concentration inequalities for Gaussian processes indexed by the localized
class $\mathcal U(\eta^*,R)$. 
The monotonicity assumption on $\Psi(R)/R^2$ then allows one to use a standard `peeling argument'.

\begin{rem}\normalfont
At the expense of additional technicalities, our results could be extended to the case of
independent sub-Gaussian noise variables, satisfying for some $\sigma>0$
\begin{equation}\label{sub-gauss}
    \E\big[e^{\lambda \varepsilon_i}\big] \le \exp(\lambda^2\sigma^2/2),
    \qquad \forall \lambda\in\R,\ \forall i\ge 1.
\end{equation}
In this case, the Borell--Sudakov--Tsirelson inequality used in the proof can be replaced
by a corresponding tail inequality for sub-Gaussian processes; see, for instance,
Corollary~8.3 in~\cite{V00}. We do not pursue this extension here.
\end{rem}

\begin{rem}[White Noise]\normalfont
A concentration result analogous to Theorem \ref{thm-random-design} can also be formulated in a white noise observation model, which is also commonly considered in the literature, see, e.g., \cite{NVW18,KSV24}. Here, $u_0$ may belong to a general separable Hilbert space $\H$ and one observes a single realization of the Gaussian process
\[
Y_N = u_0 + N^{-1/2}\mathbb W,
\]
where $\W$ is an isonormal centered white noise process indexed by $\H$, with covariance $\E[\W(g)\W(h)]=\langle g,h\rangle_{\H}$. In this case, one considers a parameterization $\eta \mapsto u_\eta\in \H$, and the objective function is given by 
\[ (\hat \eta, \hat \theta) \in \argmin_{(\eta,\theta)\in H\times \Theta}  -2\langle Y_N,u_\eta\rangle + \|u_\eta\|_{\mathbb H}^2 + \mathcal R(\eta,\theta). \]
The proof proceeds along essentially the same lines as in the discrete-design case, with the $\|\cdot\|$-norm replacing the empirical norm $\|\cdot\|_N$ throughout. Since the applications in this paper
concern the discretized case of finitely many $N\ge 1$ measurements---for which the question of computational complexity is most natural---we do not pursue the white noise formulation further here.
\end{rem}

\section{Further remarks and future directions}\label{sec-further-dis}

\subsubsection*{Boundary and other penalties}
Our objective does not include an explicit penalty enforcing boundary conditions. 
In physics-informed neural networks (PINNs), by contrast, boundary mismatch terms are often incorporated in the loss function to promote adherence to the PDE constraint \cite{PINN}. 
For the PDE-based inverse problems studied in this paper, this turns out not to be necessary: the generalized stability estimates established below show that interior information alone already controls the parameter error, even when the PDE constraint is only enforced weakly. 
That said, in other inverse problems or for different operator equations, additional penalties---including boundary terms or other structural regularizers---may be useful, and our framework readily allows such extensions.

\subsection*{Uncertainty quantification (UQ)} 

A natural question is whether the present non-likelihood-based methods can deliver not only optimal convergence rates, but also finer notions of optimality relevant for statistical uncertainty quantification (UQ), such as semiparametric efficiency in the Cram\'er--Rao sense \cite{V98}. Concretely, one may ask whether estimators based on our surrogate objectives can achieve the smallest possible asymptotic variance for regular linear functionals of the form
\[
\sqrt{N}\,\langle \hat f_N - f_0, \psi\rangle,
\]
for suitable test functions $\psi$. In settings where a Bernstein--von Mises theorem holds, Bayesian procedures are known to be asymptotically optimal for such functionals. In the Darcy inverse problem, however, $\sqrt{N}$-consistent estimation is not attainable along arbitrary smooth directions $\psi$, due to range restrictions induced by the forward map $\mathcal G$; see \cite{nickl2023bayesian} and Chapter~25 of \cite{V98}. The objective functions studied here are based on tractable surrogates of the likelihood and are therefore no longer directly linked to the Fisher information operator $D\mathcal G_{f_0}^\ast D\mathcal G_{f_0}$. As a consequence, while they yield optimal rates, they may in principle lead to a suboptimal asymptotic covariance structure. A detailed analysis of asymptotic distributions and efficiency---including limiting laws for $\hat u_N$ and $\hat f_N$---is beyond the scope of this paper and constitutes an interesting direction for future work. We also refer to \cite{S23} for related results on limiting distributions of M-estimators.

%\section{Numerical illustration}

%\red{FNO paper also studies Darcy. There $f=1$ constant source is used, and a 421x421 grid of the unit square with Dirichlet conditions.}

%\paragraph{Geometric interpretation}
%\pink{Likelihood-based methods have shown to possess statistically (and sometimes minimax optimal) convergence properties in non-linear inverse problems, including MAP estimators \cite{NVW18} and Bayes methods cite{Line of papers by Nickl etc.} However, these methods suffer from the non-linearity of $\mathcal G$ and therefore, it is yet unknown whether they can be computed efficiently.}

%% if your bibliography is in bibtex format, uncomment commands:
\bibliographystyle{imsart-number} % Style BST file (imsart-number.bst or imsart-nameyear.bst)
\bibliography{bib_polytime, ercbib}       % Bibliography file (usually '*.bib')

\newpage \appendix

\section{Auxiliary PDE results}

We present two key analytical properties for our main PDE examples, the Darcy model and the Schr\"odinger model. The properties needed are: firstly, `boundedness' properties for the forward map $\mathcal G$ between suitable (Sobolev) smoothness balls and secondly, \textit{generalized stability estimates} for $\mathcal G^{-1}$ discussed above. Note that the boundedness requirement for the forward map $\mathcal G$ is weaker than what is required in convergence proofs for Bayes and MAP estimators \cite{NVW18,GN19,nickl2023bayesian}, where local Lipschitz estimates for $\mathcal G$ are required. The technical reason why we do not require such Lipschitz estimates is that the regularity of $u$ is explicitly penalized.

\subsection{Darcy flow}

The following lemma asserts that the Darcy operator maps $H^\alpha$-bounded sets to $H^{\alpha+1}$-bounded sets.

\begin{lem}\label{lem-darcy-boundedness}
    Consider the Darcy operator $\mathcal G$ from (\ref{darcy}), with $g\in C^\infty(\mathcal O)$. Let $\mathcal F=\mathcal F (\alpha, f_{min}, R)$ be given by \eqref{F-def} for some $f_{min}>0$, $R>0$ and $\alpha > d/2+1$. Moreover, suppose that $g\in H^{\alpha-1}(\mathcal O)$. There exists a constant $C=C(d,\mathcal O,f_{min},g)$ such that for all $f\in H^\alpha(\mathcal O)$ with $f\ge f_{min}$ that 
    \[ \|\mathcal G(f)\|_{H^{\alpha+1}}\le C(1+\|f\|_{H^\alpha}^\gamma).\]
    In particular, it holds that 
    \[ \sup_{f\in\mathcal F (\alpha, f_{min}, R)} \|\mathcal G(f)\|_{H^{\alpha+1}} \vee \|\mathcal G(f)\|_{C^2} =: M (\alpha,f_{min},R) <\infty. \]
\end{lem}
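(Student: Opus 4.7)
I would prove the lemma by an elliptic bootstrap, successively raising the Sobolev regularity of $u=\mathcal G(f)$ from $H^2$ up to $H^{\alpha+1}$, while tracking a polynomial dependence on $\|f\|_{H^\alpha}$. The key PDE input is standard $H^{k+1}$ regularity for the Dirichlet Laplacian on the smooth domain $\mathcal O$; the key algebraic input is that, since $\alpha > d/2+1$, the space $H^{\alpha-1}(\mathcal O)$ is a Banach algebra continuously embedded into $L^\infty(\mathcal O)$.

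\textbf{Base step.} Using $\alpha > d/2+1$, Sobolev embedding gives $f\in C^{1+\eta}(\overline{\mathcal O})$ for some $\eta>0$, with $\|f\|_{C^{1+\eta}}\lesssim \|f\|_{H^\alpha}$. Lax--Milgram applied to the divergence-form equation with the coercivity constant $f_{\min}$ yields $u\in H^1_0$ with $\|u\|_{H^1}\leq f_{\min}^{-1}\|g\|_{H^{-1}}$. Rewriting \eqref{darcy} in nondivergence form as
\begin{equation*}
-\Delta u \;=\; \frac{1}{f}\bigl(\nabla f\cdot \nabla u - g\bigr),
\end{equation*}
the right-hand side is in $L^2$ with norm bounded polynomially in $\|f\|_{C^1}$, $f_{\min}^{-1}$, and $\|u\|_{H^1}$. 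Classical $H^2$-regularity for the Dirichlet Laplacian on smooth domains then gives $\|u\|_{H^2}\leq C(1+\|f\|_{H^\alpha}^{\gamma_2})$ for some $\gamma_2>0$.

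\textbf{Bootstrap step.} For $k=2,3,\dots,\alpha$, assume as induction hypothesis that $\|u\|_{H^k}\leq C(1+\|f\|_{H^\alpha}^{\gamma_k})$. I would apply elliptic regularity once more,
\begin{equation*}
\|u\|_{H^{k+1}} \;\lesssim\; \|\Delta u\|_{H^{k-1}} + \|u\|_{L^2},
\end{equation*}
and estimate $\|\tfrac{1}{f}(\nabla f\cdot\nabla u - g)\|_{H^{k-1}}$ via the Sobolev multiplier inequality
\begin{equation*}
\|\varphi\psi\|_{H^{k-1}} \;\lesssim\; \|\varphi\|_{L^\infty}\|\psi\|_{H^{k-1}} + \|\varphi\|_{H^{k-1}}\|\psi\|_{L^\infty},\qquad k-1\geq 0.
\end{equation*}
Since $\alpha-1>d/2$, one has $1/f\in H^{\alpha-1}\hookrightarrow L^\infty$ with $\|1/f\|_{H^{\alpha-1}}\leq C(f_{\min})Q(\|f\|_{H^\alpha})$ for a polynomial $Q$ (standard composition/Moser estimates, using that $x\mapsto 1/x$ is smooth on $[f_{\min},\infty)$). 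Using $\nabla f\in H^{\alpha-1}$, $\nabla u\in H^{k-1}$ with $k\leq \alpha$, together with $g\in C^\infty$, both factors are controlled in $L^\infty$ and $H^{k-1}$, so the multiplier inequality yields
\begin{equation*}
\|\Delta u\|_{H^{k-1}} \;\leq\; C\bigl(1+\|f\|_{H^\alpha}^{\gamma_k'}\bigr)\bigl(1+\|u\|_{H^k}\bigr),
\end{equation*}
and the induction advances with a new polynomial exponent $\gamma_{k+1}$. After $\alpha-1$ iterations I obtain the claimed bound $\|u\|_{H^{\alpha+1}}\leq C(1+\|f\|_{H^\alpha}^\gamma)$.

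\textbf{Last step and main obstacle.} For the "in particular" statement, uniform boundedness of $\mathcal F$ in $H^\alpha$ immediately gives uniform boundedness in $H^{\alpha+1}$, and the Sobolev embedding $H^{\alpha+1}\hookrightarrow C^2$ (valid since $\alpha+1>2+d/2$) converts this to a uniform $C^2$ bound. The main obstacle I anticipate is purely technical: making sure the bilinear product estimates and the nonlinear estimate for $1/f$ yield polynomial dependence on $\|f\|_{H^\alpha}$ with the correct admissible ranges, particularly when $k-1$ is small relative to $d/2$ and $H^{k-1}$ is not itself an algebra. This is handled uniformly by always placing the factor $1/f$ in the larger algebra $H^{\alpha-1}$, which simultaneously provides $L^\infty$ and $H^{k-1}$ control for every $k\leq\alpha$.
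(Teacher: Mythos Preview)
Your elliptic bootstrap via the nondivergence form $-\Delta u=\tfrac{1}{f}(\nabla f\cdot\nabla u-g)$ and Dirichlet-Laplacian regularity is correct and is precisely the argument behind Lemma~23 of \cite{NVW18}, which the paper simply cites (yielding $\gamma=\alpha^2+\alpha$). The only imprecision is that the displayed Moser-type product estimate would require $\nabla u\in L^\infty$ at the early bootstrap steps; the clean fix---which your final paragraph essentially anticipates---is to use the asymmetric multiplier bound $\|\varphi\psi\|_{H^{t}}\lesssim \|\varphi\|_{H^{s}}\|\psi\|_{H^{t}}$ for $s>d/2$ and $0\le t\le s$, applied with $\varphi\in\{1/f,\partial_j f\}\subset H^{\alpha-1}$ and $\psi=\partial_j u\in H^{k-1}$.
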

\begin{proof}
    
The first statement follows from Lemma 23 of \cite{NVW18}, noting that the regularity assumption of $\alpha >d/2+2$ (inherited from Lemma 22 in \cite{NVW18}) can be weakened to $\alpha>d/2+1$ at no cost. It is shown in Lemma 23 of \cite{NVW18} that $\gamma>0$ can be chosen as $\gamma=\alpha^2+\alpha$. The second statement follows from the Sobolev embedding $C^2\subseteq H^{\alpha+1}$, applicable due to the assumption $\alpha>d/2+1$.
\end{proof}

Next, we turn to the proof of the generalized stability property for the Darcy model, which we restate for the convenience of the reader.

\darcygeneralizedstability*

\begin{proof}[Proof of Lemma \ref{darcy-generalized-stability}]
We divide the proof into three steps. Our proof builds on the approach of \cite[Lemma~24]{NVW18} using elliptic PDE theory.

\paragraph*{Step 1} For $f_1,~f_2$ as in the hypotheses, write $h=f_1-f_2$. By assumption, $h$ satisfies
\begin{align*}
    \nabla \cdot (h\nabla u_1) &= \nabla\cdot (f_1\nabla u_1)- \nabla \cdot (f_2\nabla u_2)-\nabla \cdot (f_2\nabla (u_1-u_2)) \\ &= \nabla\cdot (f_2\nabla (u_2-u_1 )) + (g_1-g_2).     
\end{align*}

The $L^2$-norm of the right hand side can further be upper bounded by
\begin{equation}\label{h-PDE}
    \| \nabla\cdot (f_2\nabla (u_1-u_2 ))\|_{L^2} + \|g_1-g_2\|_{L^2}\le 2\|f_2\|_{C^1}\|u_2-u_1\|_{H^2} + \|g_1-g_2\|_{L^2}.
\end{equation}

\paragraph*{Step 2: Multiplier identity via Green's formula} The overall goal is to lower bound the first term of the left hand side of (\ref{h-PDE}). To this end, we prove the following identity: For $v=e^{-\lambda u_1}h$ (where $\lambda>0$ is chosen later, for now arbitrary),
\begin{equation}\label{stab-identity}
    \langle \nabla \cdot (h\nabla u_1), he^{-\lambda u_1} \rangle_{L^2} = \langle \frac 12 \Delta u_1 + \lambda \|\nabla u_1\|^2, v^2\rangle_{L^2} + \frac 12 \int_{\partial \mathcal O} v^2\frac{\partial u_1}{\partial n},
\end{equation}
where $n=n(x)$ denotes the outward normal vector on $x\in \partial \mathcal O$. Note that since $\|u_1\|_{C^2(\mathcal O)}\le B$ and $f_1-f_2\in C^1(\mathcal O)$ (with finite $C^1(\O)$-norm), the functions $v$, $u_1$ and $\nabla u_1$ are Lipschitz on $\mathcal O$ and admit a unique continuous extension to $\overline{\mathcal O}$; the boundary term is understood in this sense. To see \eqref{stab-identity}, we use Green's identity (integration by parts) to obtain
\begin{align*}
    \langle\Delta u_1,v^2\rangle + \langle \nabla u_1,\nabla (v^2)\rangle =\int_{\mathcal O} \nabla \cdot (v^2\nabla u_1)= \int_{\partial \mathcal O} \frac{\partial u_1}{\partial n} v^2,
\end{align*}
and then also
\[ \langle\Delta u_1,v^2\rangle + \frac 12 \langle \nabla u_1,\nabla (v^2)\rangle = \frac 12 \langle\Delta u_1,v^2\rangle + \frac 12\int_{\partial \mathcal O} \frac{\partial u_1}{\partial n} v^2. \]
Moreover, by the definition $v=e^{-\lambda u_1}h$, we have
\[
\nabla v = e^{-\lambda u_1}\nabla h - \lambda e^{-\lambda u_1}h\nabla u_1
= e^{-\lambda u_1}(\nabla h - \lambda h\nabla u_1).
\]
Since $\nabla(v^2)=2v\nabla v$, we obtain
\begin{align*}
\frac12 \langle \nabla u_1, \nabla(v^2)\rangle_{L^2}
&= \langle \nabla u_1, v\nabla v\rangle_{L^2} \\
&= \Big\langle \nabla u_1,\; v e^{-\lambda u_1}(\nabla h - \lambda h\nabla u_1)\Big\rangle_{L^2} \\
&= \langle \nabla h \cdot \nabla u_1,\; v e^{-\lambda u_1}\rangle_{L^2}
    - \lambda \langle \|\nabla u_1\|^2,\; v^2\rangle_{L^2}.
\end{align*}
Using the previous equations and $he^{-2\lambda u_1 }= ve^{-\lambda u_1}$, it holds that
\begin{align*}
    \langle \nabla\cdot(h\nabla u_1),he^{-2\lambda u_1}\rangle_{L^2} &= \langle h \Delta u_1+\nabla h\cdot \nabla u_1, he^{-2\lambda u_1}\rangle_{L^2}\\
    &= \langle h \Delta u_1, he^{-2\lambda u_1}\rangle_{L^2} +\frac 12 \langle \nabla u_1,\nabla (v^2)\rangle_{L^2} + \langle \lambda\|\nabla u_1\|^2,v^2 \rangle_{L^2}\\
    &= \langle \Delta u_1, v^2\rangle_{L^2} + \frac 12 \langle \nabla u_1,\nabla (v^2)\rangle_{L^2} + \langle \lambda\|\nabla u_1\|^2,v^2 \rangle_{L^2}\\
    &= \langle \frac 12 \Delta u_1 + \lambda \|\nabla u_1\|^2, v^2\rangle_{L^2} + \frac 12 \int_{\partial \mathcal O} v^2\frac{\partial u_1}{\partial n}.
\end{align*}

\paragraph*{Step 3: Lower bound for \eqref{stab-identity}} We show that the right-hand side of \eqref{stab-identity} is bounded from below by a positive multiple of $\|h\|_{L^2(\mathcal O)}^2$, for a suitable choice of $\lambda=\lambda(g_{\min},B)>0$ in the definition of $v$.

\smallskip

\emph{(i) Positivity of the boundary term.}
Since $g_1\ge g_{\min}>0$ and $u_1|_{\partial\mathcal O}=0$, the strong maximum principle implies that
\begin{equation}\label{u1-negative}
    u_1(x)<0,\qquad x\in \mathcal O.
\end{equation}
Hence, by the Hopf boundary point lemma (see, e.g., \cite[Lemma~3.4]{GT98}), we have
\begin{equation}\label{Hopf-derivative}
    \frac{\partial u_1}{\partial n}(x) >0, \qquad x\in \partial\mathcal O,
\end{equation}
and consequently,
\begin{equation}\label{boundary-nonneg}
    \frac12\int_{\partial\mathcal O} v^2 \frac{\partial u_1}{\partial n}\,\ge 0.
\end{equation}

\smallskip

\emph{(ii) Pointwise lower bound on the multiplier.}
Recall that $u_1$ solves $\nabla\cdot(f_1\nabla u_1)=g_1$, hence
\begin{equation}\label{Lf-expansion}
    g_1(x) = f_1(x)\Delta u_1(x) + \nabla f_1(x)\cdot \nabla u_1(x),\qquad x\in \mathcal O.
\end{equation}
Using $f_1\ge f_{\min}$ and $\|\nabla f_1\|_{\infty}\le \|f_1\|_{C^1}\le B$, we infer that for every $x\in\mathcal O$,
\[
g_{\min}
\le f_1(x)\Delta u_1(x) + \|\nabla f_1\|_{\infty}\|\nabla u_1(x)\|
\le B\Delta u_1(x) + B\|\nabla u_1(x)\|.
\]
Hence, for every $x\in\mathcal O$, at least one of the following two inequalities holds:
\begin{equation}\label{either-or}
    \Delta u_1(x) \ge \frac{g_{\min}}{2B}
    \qquad\text{or}\qquad
    \|\nabla u_1(x)\| \ge \frac{g_{\min}}{2B}.
\end{equation}
Since $\|u_1\|_{C^2}\le B$, we have $\|\Delta u_1\|_{\infty}\le c(B)$ for some $c(B)<\infty$. Therefore, choosing $\lambda=\lambda(g_{\min},B)>0$ sufficiently large, we obtain the pointwise lower bound
\begin{equation}\label{bulk-lower}
    \frac12 \Delta u_1(x) + \lambda\|\nabla u_1(x)\|^2 \ge c_0,
    \qquad x\in \mathcal O,
\end{equation}
for some constant $c_0=c_0(g_{\min},B)>0$.

\smallskip

\emph{(iii) Conclusion.}
Combining \eqref{stab-identity}, \eqref{boundary-nonneg} and \eqref{bulk-lower} yields
\[
\big\langle \nabla\cdot(h\nabla u_1),\,he^{-2\lambda u_1}\big\rangle_{L^2}
\;\ge\; c_0 \|v\|_{L^2(\mathcal O)}^2.
\]
On the other hand, by Cauchy--Schwarz and the bound $\|e^{-2\lambda u_1}\|_{\infty}\le \exp(2\lambda B)<\infty$,
\[
\big|\big\langle \nabla\cdot(h\nabla u_1),\,he^{-2\lambda u_1}\big\rangle_{L^2}\big|
\le \|\nabla\cdot(h\nabla u_1)\|_{L^2}\,\|he^{-2\lambda u_1}\|_{L^2}
\le \exp(2\lambda B)\|\nabla\cdot(h\nabla u_1)\|_{L^2}\,\|h\|_{L^2}.
\]
Therefore,
\begin{equation}\label{CS-final}
    \|\nabla\cdot(h\nabla u_1)\|_{L^2}\,\|h\|_{L^2}
    \;\ge\; c\,\|v\|_{L^2(\mathcal O)}^2,
\end{equation}
for some $c=c(g_{\min},f_{\min},B,\mathcal O,d)>0$. Since $u_1$ is bounded and $v=e^{-\lambda u_1}h$, we have $\|v\|_{L^2}\asymp \|h\|_{L^2}$ with constants depending only on $\lambda$ and $B$. Inserting this into \eqref{CS-final} gives
\[
\|\nabla\cdot(h\nabla u_1)\|_{L^2}\;\gtrsim\; \|h\|_{L^2},
\]
and the claimed stability estimate now follows by combining this bound with \eqref{h-PDE}.
\end{proof}

\subsection{Schr\"odinger model}

The following Lemma asserts that the Schr\"odinger parameter-to-solution map maps $H^\alpha$-bounded sets to $H^{\alpha+2}$-bounded sets. It is a direct restatement of \cite[Lemma~27]{NVW18} (with the same assumptions on the boundary data $g$); we include it only for convenience. We also note that the second statement follows from the Sobolev embedding $H^{\alpha+2}\subseteq C^2(\mathcal O)$ for $\alpha>d/2$.

\begin{lem}\label{lem-schroedinger-boundedness}
    Consider the Schr\"odinger operator $\mathcal G$ from (\ref{schroe}), with fixed $g\in C^\infty(\partial \mathcal O)$. Then, for any $\alpha >d/2$ there exists a constant $C>0$ such that for all $f\in H^\alpha(\mathcal O)$ with $f\ge 0$
    \[ \|\mathcal G(f)\|_{H^{\alpha+2}}\le C(1+\|f\|_{H^\alpha}^\gamma).\]
    In particular, it holds that for any $R> 0$, 
    \[ \sup_{f\ge 0,~\|f\|_{H^\alpha}\le R} \|\mathcal G(f)\|_{H^{\alpha+2}} \vee \|\mathcal G(f)\|_{C^2} =: M (\alpha,R) <\infty. \]
\end{lem}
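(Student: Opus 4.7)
\textbf{Proof proposal for Lemma \ref{lem-schroedinger-boundedness}.} My plan is to verify the estimate by a standard elliptic bootstrap, iterating the regularity index upward from a minimal base level all the way to $H^{\alpha+2}(\mathcal O)$. The two workhorse ingredients are (a) the Sobolev-multiplier inequality
\[
\|fu\|_{H^k(\mathcal O)} \;\le\; C\,\|f\|_{H^\alpha(\mathcal O)}\,\|u\|_{H^k(\mathcal O)}, \qquad 0\le k\le \alpha,
\]
valid for $\alpha>d/2$ since then $H^\alpha(\mathcal O)$ is a Banach algebra and acts by multiplication on $H^k(\mathcal O)$ for $0\le k\le \alpha$; and (b) the inhomogeneous-boundary elliptic regularity estimate for the Laplacian on the smooth bounded domain $\mathcal O$,
\[
\|u\|_{H^{k+2}(\mathcal O)} \;\le\; C\bigl(\|\Delta u\|_{H^k(\mathcal O)} + \|g\|_{H^{k+3/2}(\partial\mathcal O)} + \|u\|_{L^2(\mathcal O)}\bigr),
\]
see e.g.\ Chapter~2 of the standard references on elliptic boundary value problems.

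Concretely, I would first reduce to an $L^\infty$ baseline. Because $f\ge 0$, the weak maximum principle for $-\tfrac12\Delta u + fu = 0$ with $u|_{\partial\mathcal O}=g$ yields $\|u\|_{L^\infty(\mathcal O)}\le \|g\|_{L^\infty(\partial\mathcal O)}$, and hence, since $\mathrm{vol}(\mathcal O)<\infty$, a uniform $L^2$-bound independent of $f$. Next, I rewrite the PDE as $\Delta u = 2fu$, a Poisson problem with inhomogeneous boundary data $g\in C^\infty(\partial\mathcal O)$. Applying (b) with $k=0$ together with the Sobolev embedding $H^\alpha(\mathcal O)\hookrightarrow L^\infty(\mathcal O)$ to control $\|fu\|_{L^2}$ gives $\|u\|_{H^2(\mathcal O)}\le C(1+\|f\|_{H^\alpha(\mathcal O)})$, which serves as the base case.

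Then I would iterate. Given the inductive hypothesis
\[
\|u\|_{H^{k}(\mathcal O)} \;\le\; C_k\bigl(1+\|f\|_{H^\alpha(\mathcal O)}^{\,p_k}\bigr)
\]
for some exponent $p_k$ and all $2\le k\le \alpha+2$, the multiplier inequality (a) applied to $fu$ at level $\min(k,\alpha)$ yields $\|fu\|_{H^{k-2}(\mathcal O)}\le C\,\|f\|_{H^\alpha}\,\|u\|_{H^{k-2}(\mathcal O)}$ as long as $k-2\le \alpha$, and then (b) upgrades this to a bound on $\|u\|_{H^{k}(\mathcal O)}$ of the same form with an incremented exponent $p_k=p_{k-2}+1$. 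After finitely many steps one reaches $k=\alpha+2$, yielding the polynomial bound $\|\mathcal G(f)\|_{H^{\alpha+2}(\mathcal O)}\le C(1+\|f\|_{H^\alpha(\mathcal O)}^{\gamma})$ for some $\gamma=\gamma(\alpha)$. The ``in particular'' statement then follows by taking suprema over the bounded set $\{f\ge 0:\|f\|_{H^\alpha}\le R\}$ and using the Sobolev embedding $H^{\alpha+2}(\mathcal O)\hookrightarrow C^2(\mathcal O)$, which applies since $\alpha>d/2$.

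The main obstacle, though really a routine bookkeeping one, is ensuring that the multiplier estimate (a) is used only at levels $k-2\le \alpha$ so that the $H^\alpha$-algebra/module property is actually applicable; at higher levels one should put the full regularity budget on $u$ rather than on $f$, which is precisely why the target regularity is $\alpha+2$ and not higher. Alternatively, one may invoke \cite[Lemma~27]{NVW18} verbatim, since the PDE, the hypotheses on $g$, and the admissible class $\{f\ge 0,\,f\in H^\alpha\}$ there coincide with the present setting.
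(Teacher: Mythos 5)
Your proposal is correct, and in fact your closing remark coincides with what the paper actually does: the paper offers no argument of its own but simply states that the lemma is a restatement of \cite[Lemma~27]{NVW18} (with the same assumptions on $g$), adding only that the second claim follows from the Sobolev embedding $H^{\alpha+2}(\mathcal O)\hookrightarrow C^2(\mathcal O)$ for $\alpha>d/2$ — exactly as in your last step. Your self-contained bootstrap is a sound reconstruction of the argument behind that cited lemma: the weak maximum principle (valid since the zeroth-order coefficient $-f\le 0$) gives the $f$-independent $L^\infty$/$L^2$ baseline, the inhomogeneous-boundary elliptic estimate for $\Delta u = 2fu$ gives the $H^2$ base case, and the $H^\alpha$-multiplier property for $\alpha>d/2$ (bounded multiplication on $H^s$ for $0\le s\le\alpha$) drives the induction up to $H^{\alpha+2}$ with a polynomially growing exponent. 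The only loose ends are genuinely routine: the induction advances in steps of two, so one should seed it at both $k=2$ and $k=3$ (or step by one) to cover both parities of $\alpha$, and one must confirm the multiplier inequality at every intermediate level $k-2\le\alpha$, which you explicitly flag. What your route buys is an explicit, citation-free proof with a traceable exponent $\gamma(\alpha)$; what the paper's route buys is brevity, outsourcing precisely this bootstrap to \cite{NVW18}.
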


Next, we give a proof of the generalized stability property of the Schr\"odinger equation, which is restated for the convenience of the reader.

\schroedgeneralizedstability*

\begin{proof}[Proof of Lemma \ref{schroed-generalized-stability}]
    For $i=1,2$, it holds that $f_i = (\Delta u_i/2-h_i)/u_i$. The map $z\mapsto z^{-1}$ has Lipschitz constant bounded by $c_{min}^{-2}$ on the interval $[c_{min},\infty)$. Thus, the desired stability estimate is derived as follows:
    \begin{align*}
        \| f_1& -f_2\|_{L^2} = \big\| \frac{\Delta u_1/2-h_1}{u_1}-\frac{\Delta u_2/2-h_2}{u_2}\big\|_{L^2}\\
        &\le\big\| \frac{\Delta u_1-\Delta u_2}{2u_2} \big\|_{L^2} + \big\| \Delta u_1\big( \frac{1}{2u_1} - \frac{1}{2u_2}\big) \big\|_{L^2} +\big\| \frac{h_2-h_1}{u_2} \big\|_{L^2}+\big\| h_1\big(\frac{1}{u_1} - \frac{1}{u_2}\big)  \big\|_{L^2}\\
        &\le c_{min}^{-1} \|u_1-u_2\|_{H^2} + c_{min}^{-2} \|u_1\|_{C^2} \|u_1-u_2\|_{L^2}+ c_{min}^{-1}\big\| h_2-h_1 \big\|_{L^2}\\
        &\qquad\qquad + c_{min}^{-2}\|h_1\|_\infty \|u_1-u_2\|_{L^2}.\\
        &\le C(\|u_1-u_2\|_{H^2} + \|h_1-h_2\|_{L^2}),
    \end{align*}
    where the constant $C<\infty$ only depends on $c_{min}$, $\|u_1\|_{C^2}$ and $\|h_1\|_\infty$.
\end{proof}

\section{Proofs for Section \ref{sec:pde-results}}\label{sec-darcy-infdim-proof}

We present the proofs for the main results in Section \ref{sec:pde-results}. Throughout, the empirical norm will be denoted by $\|h\|_N:= \big(N^{-1}\sum_{i=1}^Nh^2(X_i)\big)^{1/2}$.

\subsection{Proof of Theorem \ref{Darcy-rate}}

    \subsubsection{PDE-penalized M-estimator}
    We first treat the case of the PDE-penalized M-estimator.
    
    \paragraph*{Step 1: A concentration inequality for the regularized empirical risk} We apply Theorem \ref{thm-random-design} with $\sigma=1$, $u\in H^{\alpha+1}(\mathcal O)$ in place of $\eta \in H$ (with the `identity parameterization') and $f\in H^\alpha(\mathcal O)$ in place of $\theta \in\Theta$, and with regularizer
    \begin{equation}\label{R-Darcy}
        \mathcal R(u,f)= \lambda_N^2 \|L_fu- g\|^2_{L^2(\mathcal O)} +  \mu_N^2 \Big(\|u\|_{H^{\alpha+1}(\mathcal O)}^2 + \|f\|_{H^{\alpha}(\mathcal O)}^2\Big),
    \end{equation}
    where we recall that $\lambda_N= N^{-\frac{2}{2(\alpha+1)+d}}$ and $\mu_N= N^{-\frac{\alpha+1}{2(\alpha+1)+d}}$. 
    Since $f_0\in \mathcal F$, by Lemma \ref{lem-darcy-boundedness} it holds that $u_0=\mathcal G(f_0)$ has finite $H^{\alpha+1}(\mathcal O)$-norm, 
    \[\sup_{f\in\mathcal F(\alpha,f_{min},R)} \|\mathcal G(f)\|_{H^{\alpha+1}(\mathcal O)}=:M <\infty. \]
    Set $(f^*,u^*)=(f_0,u_0)$. The localized regression function classes then satisfy, by the definition of $\mathcal R$, that
    \[ \mathcal U(\eta^*,R) \subseteq \{u: \|u\|_{H^{\alpha+1}}\le R/\mu_N\}. \]
    Consequently, using standard entropy bounds for Sobolev classes on bounded and smooth domains---see Theorem 4.10.3 in~\cite{Triebel1978}---we can upper bound the entropy integral in the empirical norm by
    \begin{equation}\label{entropy-integral-bound}
        \begin{split}
        \int_0^R\sqrt{H(\mathcal U(\eta^*,R),\|\cdot\|_\infty,\rho)} d\rho &\lesssim \int_0^R \Big(\frac{R}{\mu_N \rho}\Big)^{\frac{d}{2(\alpha+1)}}d\rho \lesssim R \mu_N^{-\frac{d}{2(\alpha+1)}}.
        \end{split}
    \end{equation}
    Here we have used that $\alpha+1>d/2$ by assumption. Thus, for an appropriate constant $C_1>1$, we can set
    \begin{equation}\label{Psi-definition}
        \Psi(R) := C_1 R\mu_N^{-\frac{d}{2(\alpha+1)}} \ge R+\int_0^R\sqrt{H(\mathcal U(\eta^*,R),\|\cdot\|_\infty,\rho)} d\rho,
    \end{equation}
    where we used that $\mu_N\le 1$ for all $N\ge 1$. Note that $\Psi(R)/R^2$ is decreasing in $R>0$. By our choice of $\mu_N$, it holds that
$\mu_N^{-d/(2(\alpha+1))} = N^{d/(4(\alpha+1)+2d)}$.
Then, for $C>0$ being the constant in Theorem \ref{thm-random-design}, we can verify that the condition 
    \[\sqrt N \delta_N^2 \ge C\Psi(\delta_N)=C C_1 \delta_N N^{\frac{d}{4(\alpha+1)+2d}} \] holds for $\delta_N =C_2N^{-\frac{\alpha+1}{2(\alpha+1)+d}}$ with $C_2>0$ chosen sufficiently large. Let
    \[\tau^2((u,f),u_0) = \|u-u_0\|_{N}^2 +\mathcal R(u,f).\]
    It holds that
    \[ \tau^2((u^*,f^*),u_0)=\mu_N^2 (\|f_0\|_{H^\alpha(\mathcal O)}^2+\|u_0\|_{H^{\alpha+1}(\mathcal O)}^2 ) \le (R^2+M^2) \mu_N^2\lesssim N^{-\frac{\alpha+1}{2(\alpha+1)+d}}, \]    
    Theorem \ref{thm-random-design} thus yields that for all $L\ge L_0>0$ ($L_0$ large enough) and some $C_3>0$,
    \begin{equation}\label{empirical-rate}
        \P_0^N \Big(\tau^2((\hat u_N,\hat f_N),u_0) \ge L^2N^{-\frac{2(\alpha+1)}{2(\alpha+1)+d}} \Big)\le C_3\exp\big(-L^2N^{\frac{d}{2(\alpha+1)+d}}/C_3\big),
    \end{equation}
    as well as the moment bound
    \[ \E_0^N \big[\tau^{2k}((\hat u_N,\hat f_N),u_0)\big] \lesssim  \delta_N^{2k},~~~\text{any}~k\ge 1. \]
    
\vspace{.3cm}
    
     \paragraph*{Step 2: Convergence of the $L^2$-risk} To complete the proof of \eqref{Darcy-forward}, we need to show that we can use the $L^2$-norm in place of the empirical norm. Define the $L^2$-counterpart of the $\tau$-functional by
    \[\mathcal T^2((u,f),u_0) = \|u-u_0\|_{L^2}^2+\mathcal R(u,f).\]
    Take $K \ge L_0^2$ and $K\ge 16$ large enough, and consider the event
    \[A_N = \big\{ \mathcal T^2((\hat u_N,\hat f_N),u_0) \ge K^2 \mu_N^2,~\tau ^2((\hat u_N,\hat f_N),u_0)\le K\mu_N^2 \big \}. \]
On that event, we have $\|\hat u_N\|_{H^{\alpha+1}} \le \sqrt K$, and choosing $L_0 \ge M\ge \|u_0\|_{H^{\alpha+1}} $ large enough without loss of generality, we have 
    \begin{equation*}
        \begin{split}
            \|\hat u_N-u_0\|_{L^2}^2 &\ge \|\hat u_N-u_0\|_{L^2}^2- \|\hat u_N-u_0\|_{N}^2 \\
            &= \mathcal T^2((\hat u_N,\hat f_N),u_0) -\tau^2((\hat u_N,\hat f_N),u_0)\\
            &\ge (K^2-K)\mu_N^2 \ge K^2\mu_N^2/4 \ge K \|\hat u_N-u_0\|_{N}^2/4,
        \end{split}
    \end{equation*}
    and at the same time
    \[ \|\hat u_N-u_0\|_{H^{\alpha+1}} \le L_0 + \sqrt K \le 2\sqrt K.  \]
Thus, it holds that 
\[A_N \subseteq \Big\{
\exists v\in H^{\alpha+1}(\mathcal O): \|v\|_{H^{\alpha+1}}\le 2\sqrt K,\ 
\|v\|_{L^2}\ge K\mu_N/2,\ \|v\|_{L^2}\ge \sqrt K \|v\|_N/2
\Big\}.\]
Since $\alpha+1>d/2$, Sobolev embedding yields that for some $M_K>0$, $\sup_{w:\|w\|_{H^{\alpha+1}}\le 2\sqrt K}\|w\|_\infty\le M_K\lesssim \sqrt K$. Using Lemma \ref{lem-emp-norm} with $R= K\mu_N/2$ and $M=M_K \asymp\sqrt K$ (noting that $\sqrt K/2 \ge 2$), it follows that for some constant $C>0$,
    \[ \P_0^N(A_N) \le 2\exp(-N\mu_N^2K/C),~~~ K>0~~\text{large enough}.\]
    Combining this with the concentration result for the empirical norm \eqref{empirical-rate}, we conclude that for some $C>0$ and all $K\ge K_0$ large enough,
    \begin{align*}
        \P_0^N\big( \mathcal T^2((\hat u_N,\hat f_N),u_0) \ge K^2 \mu_N^2\big) &\le \P_0^N\big( A_N \big) +  \P_0^N\big( \tau^2((\hat u_N,\hat f_N),u_0) \ge K\mu_N^2 \big) \\
        &\le C \exp(-N\mu_N^2K/C).
    \end{align*}
To obtain the corresponding moment bounds, let us write for simplicity 
$Z=\mathcal T^{2}((\hat u_N,\hat f_N),u_0)$. From the previous displayed inequality, it follows that we have
\[
\P_0^N(Z^k \ge t)=\P_0^N(Z\ge t^{1/k}) 
\lesssim \exp\big(-N\mu_N t^{1/2k} /C\big),
\qquad \forall k\ge 1,\ \forall t\ge (K_0\mu_N)^{2k}.
\]
Using Lemma~E.2 with $\gamma=1/(2k)$, $L=N\mu_N/C$ and $a=(K_0\mu_N)^{2k}$ 
(note that $La^\gamma=(N\mu_N/C)\cdot (K_0\mu_N)\simeq K_0N\mu_N^2\ge 1$ since $K_0\ge 1$), 
we deduce that
\begin{equation*}
    \begin{split}
    \E_0^N \big[Z^k\big]
    &\lesssim  (K_0\mu_N)^{2k} 
    +\int_{(K_0\mu_N)^{2k}}^\infty 
    \exp\big(-N\mu_N t^{1/2k} /C\big)\, dt \\
    &\lesssim \mu_N^{2k} 
    +\frac{\mu_N^{2k(1-\frac{1}{2k})}}{N\mu_N} 
    \exp\big(-K_0N\mu_N^2/C\big)\\
    &\le \mu_N^{2k} + \frac{\mu_N^{2k-2}}{N} 
    = O(\mu_N^{2k}),
    \end{split}
\end{equation*}
where we used in the last step that $\mu_N = o(\sqrt N)$.

\paragraph*{Step 3: Convergence rate for the inverse problem} Using the definitions of $\mathcal R$ and of $\mathcal T^2$, the preceding bound implies that for any $k\ge 1$, there exists $C_k>0$ such that
    \begin{equation}\label{darcy-moments}
    \begin{split}
                \E_0^N [ \|\hat f_N\|_{H^{\alpha}}^{2k}] + \E_0^N [ \|\hat u_N\|_{H^{\alpha+1}}^{2k}]&\le C_k,~~~ \forall N\ge 1,
               % \E_0^N [ \|L_{\hat f_N}\hat u_N-g\|_{L^2}^{2k}] &\le C_k N^{-\frac{2k(\alpha-1)}{2(\alpha+1)+d}},
    \end{split}
    \end{equation}
    as well as 
    \begin{equation}\label{darcy-MSE-moments}
        \E_0^N [ \|\hat u_N-u_0\|_{L^2}^{2k}]\le C_k \mu_N^{2k},~~~ \forall N\ge 1.
   \end{equation}
    A standard Sobolev interpolation argument gives 
    \begin{equation}\label{interpol}
        \begin{split}
        \|\hat u_N -u_0\|_{H^2}&\lesssim \|\hat u_N -u_0\|_{L^2}^{\frac{\alpha-1}{\alpha+1}} \|\hat u_N -u_0\|_{H^{\alpha+1}}^{\frac{2}{\alpha+1}} \\
    &\lesssim 
    \|\hat u_N -u_0\|_{L^2}^{\frac{\alpha-1}{\alpha+1}} \Big( \|\hat u_N \|_{H^{\alpha+1}}^{\frac{2}{\alpha+1}} + \|u_0\|_{H^{\alpha+1}}^{\frac{2}{\alpha+1}}\Big).
        \end{split}
    \end{equation}
We next verify that we may utilize Lemma~\ref{darcy-generalized-stability} with $(u_1,f_1,g_1)=(u_0,f_0,g)$ and $(u_2,f_2,g_2)=(\hat u_N,\hat f_N,L_{\hat f_N}\hat u_N)$. Since $f_0\in \mathcal F(\alpha,f_{\min},R)$, we have $f_0\ge f_{\min}>0$ on $\mathcal O$ and, by Sobolev embedding
$H^\alpha(\mathcal O)\hookrightarrow C^1(\overline{\mathcal O})$ (recall $\alpha>d/2+1$), also
$\|f_0\|_{C^1(\mathcal O)}\lesssim \|f_0\|_{H^\alpha(\mathcal O)}\le R$.  
Moreover, by Lemma~\ref{lem-darcy-boundedness} the corresponding solution $u_0=\mathcal G(f_0)$ satisfies
$u_0\in H^{\alpha+1}(\mathcal O)\subseteq C^2(\overline{\mathcal O})$ and $\|u_0\|_{C^2(\mathcal O)}\le B$ for some
$B=B(\alpha,f_{\min},R,\mathcal O)$.  
Finally, the boundary condition $u_0|_{\partial\mathcal O}=0$ holds by definition of $\mathcal G$. Consequently, Lemma~\ref{darcy-generalized-stability} yields
    \[ \|\hat f_N-f_0\|_{L^2}^2 \lesssim \|\hat f_N\|_{C^1}^2 \|\hat u_N-u_0\|_{H^2}^2 + \|L_{\hat f_N}\hat u_N - g\|_{L^2}^2 =: I+II \]
    We separately bound the expectation of the two terms. For the second term, we obtain by the definition of $\mathcal R$ and the choice of $\lambda_N$ from \eqref{Darcy-hyperparams} that
    \[ \E_0^N [II] \lesssim \lambda_N^{-2} \E_0^N[ \mathcal R(\hat u_N,\hat f_N) ]  \lesssim \lambda_N^{-2} \E_0^N[ \mathcal T^2((\hat u_N,\hat f_N),u_0) ] \lesssim N^{-\frac{2(\alpha-1)}{2(\alpha+1)+d}}. \]

Since $\alpha>d/2+1$, Sobolev embedding yields $H^\alpha(\mathcal O)\hookrightarrow C^1(\overline{\mathcal O})$. By \eqref{darcy-moments}, it thus holds that $\sup_{N\ge1}\E_0^N\|\hat f_N\|_{C^1(\mathcal O)}^{2k}<\infty$ for every $k\ge1$. Using this, the Cauchy-Schwarz inequality, \eqref{interpol} and \eqref{darcy-moments},
    \begin{align}\notag
        \E_0^N [I] &\le  \big( \E_0^N \|\hat f_N\|_{C^1}^4 \big)^{1/2} \big(\E_0^N \|\hat u_N-u_0\|_{H^2}^4\big)^{1/2}\\ 
           \notag &\lesssim \big( \E_0^N \|\hat f_N\|_{C^1}^4 \big)^{1/2} \Big(\E_0^N   \|\hat u_N -u_0\|_{L^2}^{\frac{4(\alpha-1)}{\alpha+1}} \Big( \|\hat u_N \|_{H^{\alpha+1}}^{\frac{8}{\alpha+1}} + \|u_0\|_{H^{\alpha+1}}^{\frac{8}{\alpha+1}}\Big) \Big]\Big)^{1/2}\\ 
           \label{TermI} &\lesssim \Big( \E_0^N  \Big[ \|\hat u_N -u_0\|_{L^2}^{\frac{4(\alpha-1)}{\alpha+1}} \|\hat u_N \|_{H^{\alpha+1}}^{\frac{8}{\alpha+1}}\Big]\Big)^{1/2}  + \|u_0\|_{H^{\alpha+1}}^{\frac{4}{\alpha+1}} \Big(\E_0^N   \|\hat u_N -u_0\|_{L^2}^{\frac{4(\alpha-1)}{\alpha+1}}\Big)^{1/2}\\
           &=: I_a + I_b. \notag
    \end{align}
Term $I_b$ is estimated using Jensen's inequality and (\ref{darcy-MSE-moments}),
\[ I_b \lesssim \E_0^N  \Big[ \|\hat u_N -u_0\|_{L^2}^4\Big]^{\frac{\alpha-1}{2(\alpha+1)}}\lesssim \mu_N^{\frac{2(\alpha-1)}{\alpha+1}} \lesssim N^{-\frac{2(\alpha-1)}{2(\alpha+1)+d}}. \]
For term $I_a$, we use H\"older's inequality $\|vw\|_{L^2} \lesssim \|v\|_{L^p} \|w\|_{L^q}$ with 
\[p= \frac{2(\alpha+1)}{\alpha-1},~~q=\alpha+1,~~\text{with}~~\frac 1p+\frac 1q =1/2,\] as well as \eqref{darcy-moments}-\eqref{darcy-MSE-moments} to obtain 
\[ I_a \lesssim \big( \E_0^N  \|\hat u_N -u_0\|_{L^2}^4\big)^{\frac{\alpha-1}{2(\alpha+1)}} \big( \E_0^N   \|\hat u_N \|_{H^{\alpha+1}}^{4} \big)^{\frac{1}{\alpha+1}}\lesssim \mu_N^{\frac{2(\alpha-1)}{\alpha+1}} \lesssim N^{-\frac{2(\alpha-1)}{2(\alpha+1)+d}}.\]
Putting everything together, we obtain that
\[ E_0^N \big[ \|\hat f_N-f_0\|_{L^2}^2 \big] = O(N^{-\frac{2\alpha-2}{2\alpha+2+d}}),  \]
which finishes the proof of the convergence rate for the PDE-penalized M-estimator.

\subsubsection{Plug-in estimator}\label{sec-plugin-infdim-proof} We now prove the analogous convergence guarantee for the plug-in estimator.

\paragraph*{Step 1: Convergence of $\hat u_N$} We again employ Theorem \ref{thm-random-design}, this time with a single parameter $\eta= u\in H^{\alpha+1}$. [Formally we may take $\Theta=\{0\}$ such that the minimization problem is independent of $\theta$.] The regularization functional is given by
\[\mathcal R(u)= {\mu}_N^2\|u\|_{H^{\alpha+1}}^2. \]
Define the functional 
\[ \tau^2(\hat u_N,u_0) := \|\hat u_N -u_0\|_N^2+  \mu_N^2\|u\|_{H^{\alpha+1}}^2, \]
and set $u^* = u_0$. By the definition of $\mathcal R$, it holds that
\[\mathcal U(\eta^*,R) \subseteq \{ u: \|u\|_{H^{\alpha+1}} \le R/\mu_N \},~~R>0.\]
Thus, the same arguments \eqref{entropy-integral-bound}-\eqref{Psi-definition} as in the proof for the generalized M-estimator give that the hypotheses of Theorem \ref{thm-random-design} are fulfilled with $\Psi (R) := C_1R\mu_N^{-\frac{d}{2(\alpha+1)}}$, and with $\delta_N := C_2 N^{-\frac{\alpha+1}{2(\alpha+1)+d}}$, for $C_1,C_2>0$ large enough. This implies that we have the concentration inequality
    \begin{equation}\label{empirical-rate-plugin}
        \P_0^N \Big(\tau^2(\hat u_N,u_0) \ge L^2N^{-\frac{2(\alpha+1)}{2(\alpha+1)+d}} \Big)\le C_3\exp\big(-L^2N^{\frac{d}{2(\alpha+1)+d}}/C_3\big),~~~L\ge L_0,
    \end{equation}
    where $L_0>0$ is some large enough constant, as well as the moment bound
    \[ \E_0^N \big[\tau^{2k}(\hat u_N,u_0)\big] \lesssim  \delta_N^{2k},~~~\text{any}~k\ge 1. \]
    The preceding bound extends also to the $L^2(\mathcal O)$-norm. Arguing just as in the proof for the PDE-penalized M-estimator, defining \[\mathcal T^2(\hat u_N,u_0)=:\|\hat u_N-u_0\|_{L^2}^2+ \mu_N^2 \|u\|_{H^{\alpha+1}}^2,\] and application of Lemma \ref{lem-emp-norm} yields that for all $K\ge K_0$ large enough,
    \begin{align*}
        \P_0^N\big( \mathcal T^2(\hat u_N,u_0) \ge K^2 N^{-\frac{2(\alpha+1)}{2(\alpha+1)+d}} \big)
        &\lesssim  \exp(-KN^{\frac{d}{2(\alpha+1)+d}}/C),\\
        \E_0^N\big[ \mathcal T^{2k}(\hat u_N,u_0) \big]&\lesssim \mu_N^{2k} =N^{-\frac{2k(\alpha+1)}{2(\alpha+1) +d}},
    \end{align*}
which implies in particular that for some $C_k>0$,
\begin{equation}\label{Darcy-plug-in-bounds}
    \E_0^N\big[ \|\hat u_N-u_0\|_{L^2}^{2k} \big] \le C_k\mu_N^{2k},~~~\text{and}~~~\E_0^N\big[ \|\hat u_N\|_{H^{\alpha+1}}^{2k} \big] \le C_k,~~~\forall N\ge 1.
\end{equation}

\paragraph*{Step 2: Convergence of $\hat f_N$} By definition of $\hat f_N$, it holds that 
\begin{align*}
    \|\nabla \cdot (\hat f_N \nabla \hat u_N)-g \|_{L^2}^2 + \nu_N^2 \|\hat f_N\|_{H^\alpha}^2 &\le \|\nabla \cdot (f_0 \nabla \hat u_N)-g \|_{L^2}^2 + \nu_N^2 \|f_0\|_{H^\alpha}^2\\
    &\le \|\nabla \cdot (f_0 \nabla (\hat u_N- u_0))\|_{L^2}^2 + \nu_N^2 \|f_0\|_{H^\alpha}^2\\
    &\lesssim \|f_0\|_{C^1}^2 \|\hat u_N- u_0\|_{H^2}^2 +\nu_N^2 \|f_0\|_{H^\alpha}^2\\
    &\lesssim \|\hat u_N- u_0\|_{H^2}^2+\nu_N^2.
\end{align*}
Note that the right hand side is always finite, since $\hat u_N\in H^{\alpha+1}$ and $\alpha\ge 1$ by assumption. The preceding inequality in particular implies that 
\[\|\hat f_N\|_{H^\alpha}^2 \lesssim 1+ \nu_N^{-2}\|\hat u_N- u_0\|_{H^2}^2.\]
As in the proof of the PDE-penalized estimator, the hypotheses of the generalized stability estimate from Lemma~\ref{darcy-generalized-stability}
are satisfied with $(u_1,f_1,g_1)=(u_0,f_0,g)$, since $f_0\ge f_{\min}$ and $u_0\in C^2(\overline{\mathcal O})$
with $\|f_0\|_{C^1}\vee \|u_0\|_{C^2}\le B$. Using Lemma \ref{darcy-generalized-stability}, the Sobolev embedding $H^\alpha(\mathcal O)\hookrightarrow C^1(\overline{\mathcal O})$ (since $\alpha>d/2+1$), as well as Cauchy-Schwarz, we obtain
\begin{align}\notag
    \E_0^N \big[\|\hat f_N -f_0\|_{L^2}^2 \big]&\lesssim \E_0^N \big[ \|\hat f_N\|_{C^1}^2 \|\hat u_N -u_0\|_{H^2}^2 + \|\nabla \cdot(\hat f_N\nabla \hat u_N)-g \|_{L^2}^2 \big]\\ \notag
    &\lesssim \E_0^N \big[ (1+\|\hat f_N\|_{C^1}^2) \|\hat u_N -u_0\|_{H^2}^2 \big]\\ \notag
    &\lesssim \E_0^N \big[ (1+\|\hat f_N\|_{H^\alpha}^2) \|\hat u_N -u_0\|_{H^2}^2 \big]\\ \label{plugin-inverse-rate}
    &\lesssim \Big(\E_0^N \big[1+ \eta_N^{-4}\|\hat u_N -u_0\|_{H^2}^4 \big]\Big)^{1/2}  \Big(\E_0^N \big[\|\hat u_N -u_0\|_{H^2}^4 \big]\Big)^{1/2}.
\end{align}
Using the same interpolation argument following \eqref{TermI} as well as the moment bounds in \eqref{Darcy-plug-in-bounds}, we can now estimate
\begin{align*}
    &\Big(\E_0^N \big[\|\hat u_N -u_0\|_{H^2}^4  \big]\Big)^{1/2}\\
    &\lesssim \Big( \E_0^N  \Big[ \|\hat u_N -u_0\|_{L^2}^{\frac{4(\alpha-1)}{\alpha+1}} \|\hat u_N \|_{H^{\alpha+1}}^{\frac{8}{\alpha+1}}\Big]\Big)^{1/2}  + \|u_0\|_{H^{\alpha+1}}^{\frac{4}{\alpha+1}} \Big(\E_0^N   \|\hat u_N -u_0\|_{L^2}^{\frac{4(\alpha-1)}{\alpha+1}}\Big)^{1/2}\\
    &\lesssim N^{-\frac{2(\alpha-1)}{2(\alpha+1)+d}} = \nu_N^2.
\end{align*}
Plugging the above into both factors on the right hand side of \eqref{plugin-inverse-rate} yields that 
\begin{equation*}
       \E_0^N \big[\|\hat f_N -f_0\|_{L^2}^2 \big]\lesssim N^{-\frac{2(\alpha-1)}{2(\alpha+1)+d}},
\end{equation*}
which completes the proof of theorem. 
\hfill \qed

\subsection{Proof of Theorem \ref{thm-darcy-polytime}}

\paragraph*{Step 1: Convergence of $\hat u_N$}
Let $\eta_0=S(u_0)\in h^{\alpha+1}$ denote the frame coefficients of $u_0$ satisfying 
\[  \|u_0\|_{H^{\alpha+1}(\mathcal O)} \simeq\|\eta_0\|_{h^{\alpha+1}}, \]
and denote by $\eta_{0,J}\in\R^{p_J}$ its truncation at resolution level $J$. For
\[u_{0,J}:=\sum_{l\le J,k\le N_l}(\eta_{0,J})_{lk}\phi_{lk}\in L^2_J(\mathcal O),\]
standard approximation properties of the $C^u$-regular wavelet frame yield
\begin{equation}\label{eq:approx-u}
\|u_0-u_{0,J}\|_{L^2}\lesssim 2^{-J(\alpha+1)}\|u_0\|_{H^{\alpha+1}}.
\end{equation}
We will apply Theorem~\ref{thm-random-design} for the high-dimensional parameter $\eta\in H=\R^{p_J}$, with parameterization
\[u_\eta= \sum_{l\le J,k\le N_l} \eta_{lk}\phi_{lk}, \]
with $\eta^*= \eta_{0,J}$ and with penalty term $\mathcal R(\eta)=\mu_N^2\|\eta\|_{h^{\alpha+1}_J}^2$. Recall the definition 
\[\tau^2(\eta , u_0)= \|u_0-u_\eta\|_N^2+\mu_N^2\|\eta\|_{h^{\alpha+1}_J}^2. \]
Similarly to the infinite-dimensional case, the localized function classes are contained in bounded Sobolev balls. For some constant $c>0$, we have
\[\mathcal U(\eta^*,R) \subseteq \{ u: \|u\|_{H^{\alpha+1}} \le cR/\mu_N \},~~R>0.\]
Thus, using again the entropy bounds from Theorem 4.10.3 of \cite{Triebel1978}, the hypotheses of Theorem \ref{thm-random-design} are fulfilled with $\Psi (R) := C_1R\mu_N^{-\frac{d}{2(\alpha+1)}}$, and with $\delta_N := C_2 N^{-\frac{\alpha+1}{2(\alpha+1)+d}}$, for some $C_1,C_2>0$. It follows from Theorem \ref{thm-random-design} that for all $R\ge \delta_N$ and some $C>0$,
\begin{equation}\label{eq:u-oracle}
\P_{f_0}^N\Big(
\tau^2(\hat \eta_N, u_0)
\ge
2\big(\|u_0-u_{0,J}\|_N^2+\mu_N^2\|\eta_{0,J}\|_{h^{\alpha+1}_J}^2 +R^2\big)
\Big)
\le C
\exp(-NR^2/C).
\end{equation}
An application of Bernstein's inequality to the centred random variables (recalling that $\text{vol}(\mathcal O)=1$)
\begin{equation*}
    \begin{split}
        Z_i&=(u_0-u_{0,J})^2(X_i)-\|u_0-u_{0,J}\|_{L^2}^2,\\ |Z_i|&\le \|u_0-u_{0,J}\|_\infty^2 \lesssim 2^{-2J(\alpha+1-d/2)}\|u_0\|_{H^{\alpha+1}}^2 \lesssim \mu_N^2 2^{Jd},\\
        \E [Z_i^2] &\lesssim \|u_0-u_{0,J}\|_{L^2}^2\|u_0-u_{0,J}\|_{\infty}^2\lesssim \mu_N^42^{Jd},
    \end{split}
\end{equation*}
gives that for any $L\ge 1$, and some $C,C',C''>0$,
\begin{equation*}
    \begin{split}
    \P_{0}^N\big(\|u_0-u_{0,J}\|_N^2-\|u_0-u_{0,J}\|_{L^2}^2\ge L^2\mu_N^2 \big)&\le 2\exp\Big(-\frac{NL^4\mu_N^4}{C(1+L^2) 2^{Jd}\mu_N^4 }\Big)\\
    &\lesssim \exp(-N L^2/2^{Jd}C')\\
    &\lesssim \exp(-\mu_N^{-2}L^2/C'')\\
    &\lesssim \exp(-N\mu_N^2L^2/C'').
    \end{split}
\end{equation*}
Here, in the final step, we also used that $\mu_N=N^{-\frac{\alpha+1}{2(\alpha+1)+d}}\le N^{-1/4}$ due to our assumption $\alpha > d/2+1$.

Combining this with \eqref{eq:u-oracle}
and noting that $\|\eta_{0,J}\|_{h^{\alpha+1}_J}\lesssim \|u_0\|_{H^{\alpha+1}}$ which is bounded by a constant, it follows that for all $K\ge K_0$ and some $C>0$, 
\begin{equation*}
\P_{f_0}^N\big(
\tau^2(\hat \eta_N, u_0)
\ge
K^2\mu_N^2
\big)
\le C
\exp(-NK^2\mu_N^2/C).
\end{equation*}
By arguing as in the proof of Theorem \ref{Darcy-rate}, this implies in particular the moment bounds
\begin{equation}\label{eq:u-moment}
    \E_0^N\big[ \|\hat u_N-u_0\|_{L^2}^{2k} \big] \le C_k\mu_N^{2k},~~~\text{and}~~~\E_0^N\big[ \|\hat u_N\|_{H^{\alpha+1}}^{2k} \big] \le C_k,~~~\forall N\ge 1.
\end{equation}
Now define the $L^2$-counterpart of the $\tau^2$-functional,
\[\mathcal T^2(\eta , u_0)= \|u_0-u_\eta\|_{L^2}^2+\mu_N^2\|\eta\|_{h^{\alpha+1}}^2. \]
The same application of Lemma \ref{lem-emp-norm} as in Step 2 of the proof of Theorem \ref{Darcy-rate} gives that for all $K\ge K_0>0$ large enough, and some $C>0$,
\begin{align*}
        \P_0^N\big( \mathcal T^2((\hat u_N,\hat f_N),u_0) \ge K^2 \mu_N^2\big) \le C  \exp(-N\mu_N^2K/C),
\end{align*}
as well as, for some $C_k>0$,
\begin{equation}\label{high-dim-plugin-moments}
    \E_0^N\big[ \|\hat u_N-u_0\|_{L^2}^{2k} \big] \le C_k\mu_N^{2k},~~~\text{and}~~~\E_0^N\big[ \|\hat u_N\|_{H^{\alpha+1}}^{2k} \big] \le C_k,~~~\forall N\ge 1.
\end{equation}

\paragraph*{Step 2: Convergence of $\hat f_N$}

Let $\theta_0 = S(f_0)\in h^{\alpha}$ and $\theta_{0,J}\in \R^{p_J}$ denote the frame coefficients (and their truncated counterpart) of $f_0$, and let 
\[f_{0,J}:=\sum_{l\le J,k\le N_l}(\theta_{0,J})_{lk}\phi_{lk}\in L^2_J(\mathcal O).\]
Then we have that 
\[ \|f_0\|_{H^{\alpha}(\mathcal O)} \simeq\|\theta_0\|_{h^{\alpha}},~~ \|f_0- f_{0,J}\|_{L^2}^2\lesssim 2^{-2J\alpha} \|f_0\|_{H^{\alpha}}^2,~~ \|f_0- f_{0,J}\|_{H^1}^2\le 2^{-2J(\alpha-1)} \|f_0\|_{H^{\alpha}}^2. \]
By definition of $\hat \theta_N$, we have the `basic inequality'
\begin{equation}\label{plugin-basic-ineq}
    \begin{split}
        \|\Psi\hat \theta_N -\gamma \|_{\R^{p_J}}^2 + \nu_N^2 \|\hat \theta_N\|_{h^\alpha}^2  \le  \|\Psi \theta_{0,J} -\gamma \|_{\R^{p_J}}^2 + \nu_N^2  \| \theta_{0,J} \|_{h^\alpha}^2.
    \end{split}
\end{equation}
The second term on the right hand side is upper bounded by
\[ \nu_N^2  \| \theta_{0,J} \|_{h^\alpha}^2  \le \nu_N^2  \| \theta_{0} \|_{h^\alpha}^2 \simeq \nu_N^2 \|f_0\|_{H^\alpha}^2 \lesssim N^{-\frac{2(\alpha-1)}{2(\alpha+1)+d}}. \]
For the first term, let us denote the coefficient selection operator up to level $J$ by
\[ S^{(J)} : H^\alpha (\mathcal O) \to \R^{p_J},\]
and let us also introduce the associated `pseudo-projection operator' 
\[ P^{(J)}: L^2(\mathcal O) \to L^2_J(\mathcal O),~~~ P^{(J)} (f)= \sum_{l\le J,k} S(f)_{lk}\phi_{lk}. \]
We then estimate
\begin{equation*}
\begin{split}
 \|\Psi \theta_{0,J} -\gamma \|_{\R^{p_J}} &= \| S^{(J)}(\nabla\cdot(f_{0,J}\nabla\hat u_N)- g ) \|_{\R^{p_J}} \\
 &\le \| S(\nabla\cdot(f_{0,J}\nabla\hat u_N)- g ) \|_{\ell^2(\N)} \\
 &\simeq  \| \nabla\cdot(f_{0,J}\nabla\hat u_N)- g  \|_{L^2(\mathcal O)} \\
\end{split}
\end{equation*}
Using $L_{f_0}u_0=g$, the $C^2(\mathcal O)$-bound from Lemma \ref{lem-darcy-boundedness}, and recalling that $2^{-J(\alpha-1)}\simeq \nu_N = N^{-\frac{\alpha-1}{2(\alpha+1)+d}}$, we further estimate this by
\begin{equation*}
\begin{split}
	\| \nabla\cdot(f_{0,J}\nabla\hat u_N)& - \nabla\cdot(f_0\nabla u_0) \|_{L^2(\mathcal O)}\\
	&\le \| \nabla\cdot(f_{0,J}\nabla(\hat u_N - u_0)) \|_{L^2} +\| \nabla\cdot((f_{0,J}- f_{0}) \nabla u_0) \|_{L^2}\\
	&\lesssim  \|f_{0,J} \|_{C^1} \|\hat u_N-u_0\|_{H^2} + \| f_{0,J}- f_{0}\|_{H^1} \| u_0 \|_{C^2}\\
	&\lesssim \|\hat u_N-u_0\|_{H^2} + 2^{-J(\alpha-1)} \|f_0\|_{H^\alpha}\\
	&\lesssim  \|\hat u_N-u_0\|_{H^2}  + \nu_N.
	\end{split}
\end{equation*}
Using the previous steps, the fact that $H^{\alpha-1}$ is a multiplication algebra satisfying the inequality
\[ \| vw \|_{H^{\alpha-1}}\lesssim \| v \|_{H^{\alpha-1}}\| w \|_{H^{\alpha-1}} ~~~\forall v,w\in H^{\alpha-1}. \]
as well as the Sobolev interpolation inequality, we now conclude that
\begin{equation*}
	\begin{split}
		\| L_{\hat f_N} \hat u_N - g \|_{L^2} 
        & %\| L_{\hat f_N} \hat u_N - L_{f_0}u_0 \|_{L^2} \\
		\lesssim \| S^{(J)}(L_{\hat f_N} \hat u_N - g) \|_{\R^{p_J}} +2^{-J(\alpha-1)} \| L_{\hat f} \hat u_N - g\|_{H^{\alpha-1}}\\
		&=  \|\Psi \hat\theta_N - \gamma\|_{\R^{p_J}} + 2^{-J(\alpha-1)} \| L_{\hat f} \hat u_N - g\|_{H^{\alpha-1}}\\
		&\lesssim  \|\Psi \hat\theta_N - \gamma\|_{\R^{p_J}} + \nu_N\big( \|\hat f_N\|_{H^\alpha}\|\hat u_N \|_{H^{\alpha+1}} + \|g\|_{H^{\alpha-1}}  \big)  \\
		&\lesssim \|\hat u_N-u_0\|_{H^2}  + \nu_N \big(1+\|\hat f_N\|_{H^\alpha}\|\hat u_N \|_{H^{\alpha+1}} \big)\\
        & \lesssim \big(1+\|\hat u_N \|_{H^{\alpha+1}} +\|\hat f_N\|_{H^\alpha}\|\hat u_N \|_{H^{\alpha+1}} \big)\Big(\|\hat u_N-u_0\|_{L^2}^{\frac{\alpha-1}{\alpha+1}}  +  \nu_N\Big).
		\end{split}
\end{equation*}
To further estimate $\|\hat f_N\|_{H^\alpha}$, note that the above arguments and the `basic inequality' \eqref{plugin-basic-ineq} also imply
\begin{equation*}
    \nu_N^2 \|\hat f_N\|_{H^\alpha}^2 \le \|\nabla \cdot (f_{0,J}\nabla \hat u_N)-g \|_{L^2}^2 + \nu_N^2 \|f_{0,J}\|_{H^\alpha}^2
    \lesssim \|\hat u_N- u_0\|_{H^2}^2+\nu_N^2.
\end{equation*}
An application of the generalized stability Lemma \ref{darcy-generalized-stability} with the triples $(f_0, u_0, g)$ and $(\hat f,\hat u_N, L_{\hat f_N} \hat u_N)$, which is justified since $f_0,u_0,g$ satisfy the required `ellipticity' conditions, and putting everything together, we obtain that
\begin{equation*}
	\begin{split}
	\|\hat f_N- f_0\|_{L^2}&\lesssim \|\hat f_N\|_{C^1} \| \hat u_N-u_0\|_{H^2} + \| L_{\hat f_N} \hat u_N - g\|_{L^2}\\
	&\lesssim \|\hat f_N \|_{H^\alpha}\| \hat u_N-u_0\|_{H^2} \\
    &\qquad+ \big(1+\|\hat u_N \|_{H^{\alpha+1}} \big) \big(1+\nu_N^{-1}\| \hat u_N-u_0\|_{H^2}
    \big)\Big(\|\hat u_N-u_0\|_{L^2}^{\frac{\alpha-1}{\alpha+1}}  +  \nu_N\Big)\\
    &=:I+II.
	\end{split}
\end{equation*}
Then, using the moment bounds \eqref{high-dim-plugin-moments}, we may perform an argument on each of the terms $I$ and $II$, akin to the argument in Step 2 of the proof for the infinite-dimensional case in Section \ref{sec-plugin-infdim-proof} following the display \eqref{Darcy-plug-in-bounds}, to conclude the proof and obtain that 
\[ \E_0^N\big[\|\hat f_N-f_0\|_{L^2}^2 \big] \lesssim \nu_N^2, \]
which completes the proof of Theorem \ref{thm-darcy-polytime}. \hfill\qed

\subsection{Proof of Theorem \ref{darcy-initialisation}}\label{sec-proof-init}
	 Let $\hat f_N$ denote the plug-in estimator from Theorem \ref{thm-darcy-polytime}, computable in polynomial time. It follows from the proof of Theorem \ref{thm-darcy-polytime} that for $L\ge 1$ sufficiently large, we have
	\[\P_0^N \big( \|\hat f_N\|_{H^\alpha}\le L \big) \xrightarrow{N\to\infty} 1. \]
	Moreover, since $\alpha >d/2 +1$, we have that on the event $\{  \|\hat f_N\|_{H^\alpha}\le L  \}$, the Sobolev embedding $H^{\alpha-1}\hookrightarrow L^\infty$ and interpolation imply that
	\[ \|\hat f_N-f_0\|_\infty \lesssim \|\hat f_N-f_0\|_{H^{\alpha-1}} \lesssim \|\hat f_N-f_0\|_{L^2}^{\frac{1}{\alpha}}\|\hat f_N-f_0\|_{H^{\alpha}}^{\frac{\alpha-1}{\alpha}}\le L\|\hat f_N-f_0\|_{L^2}^{\frac{1}{\alpha}}.
	\]
	By Theorem \ref{thm-darcy-polytime}, we conclude that
	\[  \|\hat f_N-f_0\|_\infty \xrightarrow{N\to\infty} 0 ~~\text{in}~~\P_0^N\text{-probability}.\]
	In particular, setting $\rho := \inf_{x}f_0(x) -f_{min}>0$ and choosing $L$ large enough, it holds that the event 
	\[ A_N:= \{   \inf_{x} \hat f_N(x) \ge f_{min}+\rho/2 \} \cap \{ \|\hat f_N\|_{H^\alpha}\le L  \}\cap \{ \|\hat f_N-f_0\|_{L^2}\le L N^{-\frac{\alpha-1}{2(\alpha+1)+d}} \}. \]
	has probability tending to $1$, $\P_0^N(A_N)\to 1$, as $N\to\infty$. Under our assumptions on the link function $\Phi$, its inverse  $\Phi^{-1}$ restricted to the interval $[f_{min}+\rho/2,\infty)$ is uniformly Lipschitz with uniformly bounded derivatives of all orders, and thus we have the Sobolev estimate 
	\[ \sup_{\|f\|_{H^\alpha}\le L,~f\ge f_{min}+\rho/2} \|\Phi^{-1}\circ f\|_{H^\alpha } \le M<\infty \] 
for some finite constant $M=M(\alpha,\Phi,f_{min},\rho,L,\mathcal O)$. See, for instance, \cite[Lemma 30]{NVW18} for a proof, which uses Faa di Bruno's formula as well as the Gagliardo-Nirenberg interpolation inequality. We set $\hat F_N:= \Phi^{-1}\circ \hat f_N$ and $F_0:= \Phi^{-1}\circ f_0$ (outside of the event $A_N$, we may set $\hat F_N$ arbitrarily, in a measurable way). Then on the event $A_N$, it holds that $\|\hat F_N\|_{H^\alpha} \le M$, and moreover that 
\[ \|\hat F_N - F_0\|_{L^2}\lesssim \|\hat f_N - f_0\|_{L^2} \le LN^{-\frac{\alpha-1}{2(\alpha+1)+d}}. \] 
Finally we define $\theta_{init}$ to be given by the first $D$ coefficients in the $(e_k:k\ge 1)$-basis of $L^2(\mathcal O)$, 
\[ \theta_{init, k} = \langle \hat F_N, e_k\rangle_{L^2}~~~\forall 1\le k\le D,~~~ \theta_{init}\in \R^D. \]
By Parseval's identity for the orthonormal basis $(e_k:k\ge 1)$, we then have
\begin{equation*}
	\begin{split}
		\|\theta_{init}-\theta_{0,D} \|_{\R^D} & =\Big(\sum_{k=1}^D \big|\langle \hat F_N-F_0,e_k\rangle_{L^2}\big|^2\Big)^{1/2}\\
        &\le \|\hat F_N-F_0\|_{L^2}\lesssim LN^{-\frac{\alpha-1}{2(\alpha+1)+d}}.
	\end{split}
\end{equation*}
For any $\alpha \ge 12$ (and in particular for any $\alpha\ge 22$, as assumed),  the latter rate is of the order $o(  D^{-8/d}/\log D)$, since $D^{-8/d}= N^{-\frac{8}{2\alpha+d}}$ and
\[ \frac{\alpha-1}{2(\alpha+1)+d} \ge \frac{\alpha-3}{2\alpha+d} > \frac{8}{2\alpha+d}, \]
which completes the proof.

\hfill \qed

\subsection{Proof of Theorem \ref{thm-adaptive}}\label{sec-adaptive-proof}
    \paragraph*{Step 1} We first prove \eqref{adaptive-u-consistency} for the empirical norm, using Theorem \ref{thm-random-design} with $\eta = (\beta,u)\in H$ and parameterization $\eta=(\beta,u)\mapsto u$, where
    \[ H = \big\{ (\beta,u) : \beta\in [\beta_{\min}, \beta_{\max}],~u\in H^\beta (\mathcal O)\big\}. \]
    %We notice that our estimated pair $(\hat \beta,\hat u)$ yields the same regression function $\hat u$ as the least squares problem with penalty
    %\[ \mathcal R(u) = \inf_{\beta}\mu_{N,\beta}^2\|u\|_{H^\beta}^2. \]
    Set $\eta^*= (\beta_0,u_0)$, and consider the for $R>0$ the localized function classes
    \[ \mathcal U^*(\eta^*,R)= \{ u: \|\hat u-u_0\|_{N}^2 + \mu_{N,\beta}^2 ( \|u\|_{H^\beta}^2 +A^2 ) \le R^2 ~~\text{for some}~~\beta\in [\beta_{\min},\beta_{\max}] \}. \]
    For any $\beta$ with $\mu_{N,\beta}^2 > R^2/A^2$, there can be no contribution of pairs $(\beta,u)\in H$ with that value of $\beta$. Thus
    \[ \beta(R) :=\inf \{\beta: \mu_{N,\beta} \le R/A \}, \]
    the $\|\cdot\|_\infty$-entropy of $\mathcal U^*(\eta^*,R)$, which upper bounds the empirical entropy, satisfies for some $C>0$
    \begin{equation*}
        H(\mathcal U^*(\eta^*,R), \|\cdot\|_\infty,\rho) \le C  \sum_{\beta\ge \beta(R)} \Big(\frac{R}{\rho\mu_{N,\beta}}\Big)^{\frac{d}{\beta}}\lesssim \Big(\frac{R}{\rho\mu_{N,\beta(R)}}\Big)^{\frac{d}{\beta(R)}}.
    \end{equation*}
    Here, we have used standard entropy bounds for $H^\beta(\mathcal O),~\beta>d/2$ (see Theorem 4.10.3 in \cite{Triebel1978}), the fact that the sum is over a fixed finite number of terms, as well as
    \[ \mu_{N,\beta}^{d/\beta} = N^{\frac{d}{2\beta+d}}\le N^{\frac{d}{2\beta(R)+d}},~~~\text{for}~~~\beta\ge \beta(R).\]
    Noting that $\beta_{min}>d/2$, we can then upper bound the metric entropy integral of $\mathcal U^*(\eta^*,R)$ by 
    \begin{equation*}
        \begin{split}
            \int_0^R H^{1/2}(\mathcal U^*(\eta^*,R), \|\cdot\|_\infty,\rho)d\rho &\lesssim \int_0^R \Big(\frac{R}{\rho\mu_{N,\beta(R)}}\Big)^{\frac{d}{2\beta(R)}} d\rho\\
            &\lesssim R \mu_{N,\beta(R)}^{-\frac{d}{2\beta(R)}} \\
            &= R\sqrt N \mu_{N,\beta(R)} \le R^2 \sqrt N / A.
        \end{split}
    \end{equation*}
    Choosing the constant $A>0$ large enough (only depending $C>0$ as well as the constant from the entropy condition \eqref{entropy-ineq}), it follows that \eqref{entropy-ineq} is fulfilled for all $R>0$. We fix such a choice of $A>0$ henceforth.

    Applying Theorem \ref{thm-random-design}, we obtain that for some $C_1>0$ and all $R>0$
    \begin{equation}\label{adaptive-concentration}
        \begin{split}
        \P_{f_0}^N \Big( \|\hat u_N-u_0\|_{N}^2 + \mu_{N,\hat \beta}^2\|\hat u_N\|_{H^{\hat\beta}}^2  \ge 2 \mu_{N,\beta_0}^2(\|u_0\|_{H^{\beta_0}}^2 &+A^2) +2 R^2 \Big)\\
        &\lesssim C_1\exp\big( -\frac{NR^2}{C_1}\big),
    \end{split}
    \end{equation}
as well as
\begin{equation*}
    \begin{split}
    \E_{f_0}^N \big[ \|\hat u_N-u_0\|_{N}^2 + \mu_{N,\hat \beta}^2\|\hat u_N\|_{H^{\hat\beta}}^2 \big] &\lesssim \mu_{N,\beta_0}^2(\|u_0\|_{H^{\beta_0}}^2+A^2) +\frac 1N \simeq \mu_{N,\beta_0}^2.
    \end{split}
\end{equation*}
   Using the concentration inequality derived in the previous step with $R=\mu_{N,\beta_0}$ gives that the following event has probability tending to $1$,
   \[ A_N := \Big\{ \|\hat u_N-u_0\|_{N}^2 + \mu_{N,\hat \beta}^2\|\hat u_N\|_{H^{\hat\beta}}^2  \le 2 \mu_{N,\beta_0}^2(\|u_0\|_{H^{\beta_0}}^2 +A^2+1)\Big\},\quad \P_{f_0}^N(A_N)\xrightarrow{N\to\infty} 1. \]
   We may thus restrict the remainder of the proof to this event.
    
    \paragraph*{Step 2: Convergence of the $L^2$-risk} We would now like to show a corresponding rate for the $L^2$-risk. Fix some $\beta \in [\beta_{min}:\beta_{max}]$ and suppose we are on the event $A_N\cap \{\beta = \hat\beta\}$. We have that for some constants $B,B'>0$,
    \[ \|\hat u_N\|_\infty \le B \|\hat u_N\|_{H^{\hat\beta}} \le B'\mu_{N,\beta_0}/\mu_{N,\hat\beta} =: M_N, \]
    We want to apply Lemma \ref{lem-emp-norm} for the function class 
    \[\mathcal U = \{ u-u_0 : \|u\|_{H^{\beta}}\le M_N/B \},\]
    which has entropy bound
    \[ H(\mathcal U,\|\cdot\|_\infty, \rho) \lesssim \big(\frac{M_N}{\rho}\big)^{\frac d{\beta}}. \]

    \textit{i) Case $\beta \ge \beta_0$.} In this case, we have that $M_N\to\infty$, such that the 
    \[ \sup_{v\in\mathcal U}\|v\|_\infty \le 2M_N \]
    for all $N$ large enough. For such $N$, the entropy condition in Lemma \ref{lem-emp-norm} becomes
    \begin{equation*}
        N\delta_N^2 \gtrsim M_N^2 \Big( \frac{M_N}{\delta_N}\Big)^{\frac{d}{\beta}}~~~\iff ~~~\delta_N \gtrsim N^{-\frac{\beta}{2\beta+d}}M_N \simeq \mu_{N,\beta_0}.
    \end{equation*}
    Then we can apply Lemma \ref{lem-emp-norm} with $M=2M_N$ and $\delta_N \simeq \mu_{N,\beta_0}$ to obtain that for constants 
$C,C',C''>0$,
    \begin{equation*}
        \begin{split}
            \P_{f_0}^N\big(A_N \cap \{ &\hat\beta =\beta\},~\|\hat u_N-u_0\|_{L^2} \ge 2\|\hat u_N-u_0\|_N,~\|\hat u_N-u_0 \|_{L^2} \ge C\mu_{N,\beta_0}\big)\\
            &\le \P_{f_0}^N\big(\exists v \in \mathcal U:~\|v\|_{L^2} \ge 2\|v\|_{N},~\|v\|_{L^2}\ge C\mu_{N,\beta_0} \big) \\
            &\lesssim \exp\Big(-\frac{N\mu_{N,\beta_0}^2}{C'M_N^2}\Big)\xrightarrow{N\to\infty} 0.
        \end{split}
    \end{equation*}
    For a sufficiently large constant $L>0$ and since $\P_{f_0}^N(A_N)\to 1$, we obtain
    \[  \P_{f_0}^N\big( \{\hat \beta = \beta \} \cap \{ \|\hat u_N-u_0 \|_{L^2} \ge L\mu_{N,\beta_0}\} \big) \xrightarrow{N\to\infty} 0.\]

    \textit{ii) Case $\beta <\beta_0$.} In this case, we have that $M_N\to 0$, such that the $L^\infty$ bound is of order $1$,
        \[ \sup_{v\in\mathcal U}\|v\|_\infty \le M_N +\|u_0\|_\infty \lesssim 1. \]
Thus, the entropy condition in Lemma \ref{lem-emp-norm} becomes
    \begin{equation*}
        N\delta_N^2 \gtrsim {\delta_N}^{-\frac{d}{\beta}}~~~\iff ~~~\delta_N \gtrsim \mu_{N,\beta}.
    \end{equation*}
    Then we can apply Lemma \ref{lem-emp-norm} with some fixed $M<\infty$ of constant order and $\delta_N \simeq \mu_{N,\beta}$ to obtain that for constants 
$C,C',C''>0$,
    \begin{equation*}
        \begin{split}
            \P_{f_0}^N\big(A_N \cap \{ &\hat\beta =\beta\},~\|\hat u_N-u_0\|_{L^2} \ge 2\|\hat u_N-u_0\|_N,~\|\hat u_N-u_0 \|_{L^2} \ge C\mu_{N,\beta_0} \big)\\
            &\le \P_{f_0}^N\big(\exists v \in \mathcal U:~\|v\|_{L^2} \ge 2\|v\|_{N},~\|v\|_{L^2}\ge C\mu_{N,\beta} \big) \\
            &\xrightarrow{N\to\infty} 0.
        \end{split}
    \end{equation*}
For a sufficiently large constant $L>0$ and since $\P_{f_0}^N(A_N)\to 1$, we obtain
    \[  \P_{f_0}^N\big( \{\hat \beta = \beta \} \cap \{ \|\hat u_N-u_0 \|_{L^2} \ge L\mu_{N,\beta}\} \big) \xrightarrow{N\to\infty} 0.\]
Note that the rate $\mu_{N,\beta}$ is slower than the targeted rate $\mu_{N,\beta_0}$; we will see in the next step that the event $\{\hat\beta <\beta_0\}$ has vanishing probability.

\paragraph*{Step 3: Behaviour of $\hat\beta$} Fix $\beta<\beta_0$ and suppose $\hat \beta =\beta$. On $A_N$, we have that 
\[ \|\hat u_N\|_{L^2}\le  \|\hat u_N\|_{H^{\beta}} \lesssim \mu_{N,\beta_0}/\mu_{N,\beta} \to 0.\]
Moreover, by Step 2 ii), it holds that with probability tending to $1$, $\|\hat u_N - u_0\|_{L^2} \le L \mu_{N,\beta}$. On the event where this is the case,
\[ \|\hat u_N - u_0\|_{L^2} \ge \|u_0\|_{L^2}- \|\hat u_N\|_{L^2} - O(\mu_{N,\beta_0}/\mu_{N,\beta}) \xrightarrow{N\to\infty} \|u_0\|_{L^2} >0, \]
which contradicts $\|\hat u_N - u_0\|_{L^2}= O(\mu_{N,\beta})\to 0$. We thus note that $P_{f_0}^N(\hat \beta=\beta) \to 0$, and hence 
\[ \P_0^N(\hat\beta \ge \beta_0) \xrightarrow{N\to\infty} 1. \]

\paragraph*{Step 4: Inverse problem} Recall the plug-in estimator
\begin{align}
     \hat f_N \in\argmin_{f\in H^{\alpha_{\min}}} J(f),~~~ J(f) = \|L_f \hat u_N- g\|_{L^2}^2 + \nu_{N,\hat \beta}^2 \|f\|_{H^{\alpha_{\min}}}^2,
\end{align}
where $\nu_{N,\beta} = N^{-\frac{\beta-2}{2\beta+d}}$. We work on the event
\[ B_N := A_N\cap \{\hat\beta \ge \beta_0\} \cap \{ \|\hat u_N-u_0\|_{L^2} \le L\mu_{N,\beta_0} \}, \]
for some sufficiently large $L>0$, whose probability tends to $1$ by the preceding proof steps. By the definition of $J$ and using $L_{f_0}u_0=g$, it holds that
\begin{equation*}
    \begin{split}
    J(f_0) &=\|L_{f_0} ( \hat u_N- u_0)\|_{L^2}^2 + \nu_{N,\hat \beta}^2 \|f\|_{H^{\alpha_{\min}}}^2\\
    &\le  \|f_0\|_{C^1}^2\|\hat u_N - u_0\|_{H^2}^2 + \nu_{N,\beta_0}^2 \|f_0\|_{H^{\alpha_{\min}}}^2,
    \end{split}
\end{equation*} 
where we have also used that $\nu_{N,\hat \beta}\le \nu_{N,\beta_0}$. Since $f_0\in H^{\alpha_{min}}$, the second term $ \nu_{N,\beta_0}^2 \|f_0\|_{H^{\alpha_{\min}}}^2= O(\nu_{N,\beta_0}^2)$ is of the desired order.

To estimate the first term, let $u_{0,J}$ denote the high-dimensional wavelet truncation of $u_0$ with $2^{J}\simeq N^{\frac{1}{2\beta_0+d}}$, like in the proof of Theorem \ref{thm-darcy-polytime}. Then
\begin{equation*}
    \begin{split}
        \|\hat u_N - u_0\|_{H^2} \le  \|\hat u_N - u_{0,J}\|_{H^2} +  \|u_0 - u_{0,J}\|_{H^2}=: I+II.
    \end{split}
\end{equation*}
Since $u_0\in H^{\beta_0}(\mathcal O)$, the second term is estimated
\[ II = \|u_0 - u_{0,J}\|_{H^2} \lesssim 2^{J(\beta_0-2)}\|u_0\|_{H^{\beta_0}} \lesssim N^{-\frac{\beta_0-2}{2\beta_0+d}}. \]
We estimate the first term by an interpolation argument. First, we note that 
\[  \|u_{0,J}\|_{H^{\hat \beta}}^2 \simeq \sum_{l\le J,k} S(f)_{lk}^2 2^{2l\hat \beta} \lesssim 2^{2J(\hat\beta-\beta_0)} \|u_0\|_{H^{\beta_0}}^2\lesssim N^{\frac{2(\hat\beta-\beta_0)}{2\beta_0+d}} \le \mu_{N,\beta_0}^2/ \mu_{N,\hat\beta}^2, \]
since $\hat\beta \ge \beta_0$. Then,
\begin{align*}
    \|\hat u_N - u_{0,J}\|_{H^2} &\lesssim \|\hat u_N- u_{0,J}\|_{H^{\hat\beta}}^{2/\hat\beta}  \|\hat u_N - u_{0,J}\|_{L^2}^{(\hat\beta-2)/\hat\beta}\\
    &\lesssim \big(  \|\hat u_N\|_{H^{\hat\beta}} +\| u_{0,J}\|_{H^{\hat\beta}}  \big)^{2/\hat\beta} \big( \|\hat u_N - u_{0}\|_{L^2}+ \|u_0 - u_{0,J}\|_{L^2}\big)^{(\hat\beta-2)/\hat\beta}\\
    &\lesssim \big(\mu_{N,\beta_0}/ \mu_{N,\hat\beta}\big)^{2/\hat\beta}\mu_{N,\beta_0}^{(\hat\beta-2)/\hat\beta}\\
    & = N^{-\frac{\beta_0}{2\beta_0+d}+\frac{2}{2\hat\beta +d }} \le N^{-\frac{\beta_0-2}{2\beta_0+d}}.
\end{align*}
This completes the proof.
\hfill\qed

\subsection{Proof of Theorem \ref{Schroed-rate}}\label{sec-schroed-proof}
The proof is analogous to that of Theorem \ref{Darcy-rate}. Recall
\[
\mu_N := N^{-\frac{\alpha+2}{2(\alpha+2)+d}}
\qquad\text{and}\qquad
\nu_N := N^{-\frac{\alpha}{2(\alpha+2)+d}}.
\]
We begin with the proof for the plug-in M-estimator. By Lemma \ref{lem-schroedinger-boundedness}, there exists $M=M(\alpha,f_{\min}, R)>0$ such that $\|u_0\|_{H^{\alpha+2}}\le M$. We may thus apply Theorem \ref{thm-random-design} with $H=\mathcal U=H^{\alpha+2}(\mathcal O)$, $\eta^*=u_0$ and entropy upper bound 
\[  R+ \int_0^R\Big(\frac{R}{\mu_N\rho}\Big)^{\frac{d}{2(\alpha+2)}}d\rho \lesssim R \mu_N^{-\frac{d}{2(\alpha+2)}} =: \Psi(R). \]
Here, we have used the fact that the localized regression function classes relevant in Theorem \ref{thm-random-design} satisfy $\mathcal U^*(u_0,R)\subseteq \big\{ u:\|u\|_{H^{\alpha+2}}\le R/\mu_N\big\}$, as well as the $L^\infty$ entropy bounds of Sobolev classes from Theorem 4.10.3 in \cite{Triebel1978}, applicable since $\alpha+2>d/2$. The requirement $\sqrt N\delta_N^2 \ge \Psi(\delta_N)$ then yields the convergence rate of
\[\delta_N\gtrsim N^{-\frac{\alpha+2}{2(\alpha+2)+d}}=\mu_N\]
for $\hat u_N$, and we obtain 
\[  \P_{f_0}^N\Big(  \|\hat u_N - \mathcal G(f_0)\|_{N}^2 +\mu_N^2 \|\hat u_N\|^2_{H^{\alpha+2}(\mathcal O)} \ge C \mu_N^2\Big) \lesssim \exp\Big(-\frac{N\mu_N^2}{C}\Big)\xrightarrow{N\to\infty} 0,\]
for some sufficiently large constant $C>0$. Applying Lemma \ref{lem-emp-norm} just as in the proof of Theorem \ref{thm-darcy-polytime} then also shows that 
\[  \P_{f_0}^N\Big(  \|\hat u_N - \mathcal G(f_0)\|_{L^2}^2 +\mu_N^2 \|\hat u_N\|^2_{H^{\alpha+2}(\mathcal O)} \ge C \mu_N^2\Big) \xrightarrow{N\to\infty} 0,\]
for some (possibly larger) constant $C>0$. This proves the first statement of the theorem.

Let $A_N$ denote the event in the previous displayed equation. For the proof of the second statement, we may work throughout on the event $A_N$. On $A_N$, by interpolation, we have that 
\[ \|\hat u_N-u_0\|_{H^2}\lesssim \|\hat u_N-u_0\|_{L^2}^{\frac{\alpha}{\alpha+2}} \|\hat u_N-u_0\|_{H^{\alpha+2}}^{\frac{2}{\alpha+2}} \lesssim N^{-\frac{\alpha}{2(\alpha+2)+d}} \]
Moreover, for any $\beta \in (d/2, \alpha+2)$, the Sobolev embedding theorem and an interpolation argument imply that
\begin{equation*}
    \begin{split}
        \|\hat u_N-u_0\|_\infty &\lesssim \|\hat u_N-u_0\|_{H^\beta}\\
        &\lesssim \|\hat u_N-u_0\|_{L^2}^{\frac{\alpha+2-\beta}{\alpha+2}} \|\hat u_N-u_0\|_{H^{\alpha+2}}^{\frac{\beta}{\alpha+2}}\\
        &\lesssim \mu_N^{\frac{\alpha+2-\beta}{\alpha+2}} \xrightarrow{N\to\infty} 0.
    \end{split}
\end{equation*}
Let $u_{\min}:= \inf_{x\in\mathcal O} u_0(x)$. Under our standing assumptions $\alpha>d/2$, $g_{\min}>0$ and $g\in C^\infty(\partial\O)$ with $\inf_{\partial\O}g\ge g_{\min}$, a Feynman--Kac argument implies a uniform positivity bound $\inf_{x\in\O}u_f(x)\ge g_{\min}e^{-c\|f\|_\infty}>0$ for all $f\in\mathcal F$, see display (5.36) in \cite{NVW18}. In particular, it holds that $u_{\min}>0$. Thus \[\P_{f_0}^N(\inf_{x\in\O} \hat u_N \ge u_{\min} /2) \xrightarrow{N\to\infty} 1.\]
By the definition of $\hat f_N$, it holds that 
\begin{equation*}
    \begin{split}
        \|L_{\hat f_N}\hat u_N\|_{L^2}^2 + \nu_N^2\|\hat f_N\|_{H^\alpha}^2 &\le  \|L_{f_0}\hat u_N\|_{L^2}^2 +\nu_N^2\|f_0\|_{H^\alpha}^2 \\
        &\le \|L_{f_0}\hat u_N-L_{f_0}u_0\|_{L^2}^2 +\nu_N^2\|f_0\|_{H^\alpha}^2 \\
        &\lesssim \|\hat u_N-u_0\|_{H^2}^2 +\nu_N^2\|f_0\|_{H^\alpha}^2 \\
        &\lesssim \nu_N^2,
    \end{split}
\end{equation*}
which in particular also yields that for $\beta\in (0,\alpha-d/2)$, $\|\hat f_N\|_{C^\beta} \lesssim \|\hat f_N\|_{H^\alpha}\le C''$ for some constant $C''$, on the event $A_N$. Thus, an application of the generalized stability estimate from Lemma \ref{schroed-generalized-stability} with the first triplet consisting of $(u_0,f_0,0)$ and the second triplet consisting of $(\hat u_N,\hat f_N, L_{\hat f_N}\hat u_N)$ yields that on the event $A_N$, 
\[ \|\hat f_N-f_0\|_{L^2} \lesssim \|\hat u_N-u_0\|_{H^2} + \|L_{\hat f_N}\hat u_N\|_{L^2}\lesssim \nu_N, \]
which proves the second claim.
\hfill \qed

\section{Proof of Theorem \ref{thm-random-design}}\label{appendix-M-estimationproofs} 

The proof works by adapting ideas from \cite{V00} to the current setting with `augmented parameters' $(\eta,\theta)$. Fix any $(\theta^*,\eta^*)$ and let us write 
\[\hat u := u_{\hat \eta},~~~~~u^*:= u_{\eta^*}.\] By the definition of $(\hat \eta_N,\hat\theta_N)$, it holds that
\[ \frac 1N \sum_{i=1}^N (Y_i-u_{\hat \eta}(X_i))^2 + \mathcal R(\hat \eta,\hat \theta) \le \frac 1N \sum_{i=1}^N (Y_i-u^*(X_i))^2 + \mathcal R(\eta^*,\theta^*),
 \]
which is the same as
    \begin{equation}\label{rd:design:base}
    \begin{split}
        \tau^2((\hat \eta,\hat\theta),u_0) - \tau^2((\eta^*,\theta^*),u_0)  \le \frac 2N \sum_{i=1}^N\eps_i (\hat u(X_i)-u^*(X_i)).
    \end{split}
    \end{equation}
Next, we note that the functional $\tau^2$ satisfies the following `triangle-type' inequality in the $\eta$-variable: For any $\eta,\eta', \eta''\in H$ with corresponding regression functions $u,u',u''$ and any $\theta, \theta'\in\Theta$, using the inequality $\|a-b\|_N^2\le 2\|a-c\|_N^2+2\|c-b\|_N^2$
for the empirical norm and the non-negativity of the penalty
$\mathcal R\ge 0$, we have
\begin{align*}
    \tau^2((\eta,\theta),u_0)&\le 2\tau^2((\eta,\theta),u')+2\tau^2((\eta',\theta'),u_0),\\
    \tau^2((\eta,\theta),u'')&\le 2\tau^2((\eta,\theta),u')+2\tau^2((\eta'',\theta'),u'),
\end{align*}
which implies
	\begin{align}
			\tau^2((\hat \eta, \hat \theta),u^*)&\ge \frac 12 \tau^2((\hat \eta, \hat \theta),u_0)-\tau^2((\eta^*,\theta^*),u_0),      \label{rd:design:tau1}\\
			\tau^2((\hat \eta, \hat \theta),u_0)&\ge \frac 12 \tau^2((\hat \eta, \hat \theta),u^*)-\tau^2((\eta^*,\theta^*),u_0).    \label{rd:design:tau2}
	\end{align}
Denote the event whose probability we want to bound by
\begin{equation}\label{rd:design:base2}
    A_N := \big\{\tau^2((\hat \eta,\hat \theta),u_0)\ge 2\big(\tau^2((\eta^*, \theta^*),u_0)+R^2) \big\}.
\end{equation}
On $A_N$, using (\ref{rd:design:tau1}), it holds that
\[ \tau^2((\hat \eta, \hat \theta),u^*)\ge R^2. \]
Moreover, by the definition of $A_N$ and using \eqref{rd:design:tau2}, we obtain
\begin{align*}
    \tau^2((\hat \eta, \hat \theta),u_0) -\tau^2((\eta^*,\theta^*),u_0) &=\frac 13 \tau^2((\hat \eta, \hat \theta),u_0) + \frac 23 \tau^2((\hat \eta, \hat \theta),u_0) -\tau^2((\eta^*,\theta^*),u_0) \\
    &\ge \frac 13 \tau^2((\hat \eta, \hat \theta),u_0) + \frac 13 \tau^2((\eta^*,\theta^*),u_0) \\
    &\ge \frac 16 \tau^2((\hat \eta,\hat\theta),u^*).
\end{align*}
In light of (\ref{rd:design:base}), this further implies
\[ 
\frac 1N \sum_{i=1}^N\eps_i (\hat u(X_i)-u^*(X_i)) \ge \frac 1{12} \tau^2((\hat \eta,\hat\theta),u^*). \]
The preceding argument allow us to employ `slicing' to upper bound the probability of $A_N$,
\begin{align*}
    \P_0^N(A_N) &\le \sum_{s=1}^\infty ~\P_0^N \big(A_N ~\text{and}~\tau^2((\hat \eta, \hat\theta),u^*)\in \big[2^{2(s-1)}R^2, 2^{2s}R^2 \big] \big)\\
    &\le \sum_{s=1}^\infty \P_0^N \Big(\sup_{u\in \mathcal U(\eta^*,2^{s}R)} \Big|\frac 1{\sqrt N} \sum_{i=1}^N\eps_i (u(X_i)-u^*(X_i)) \Big| \ge \frac 1{48} \sqrt N 2^{2s} R^2 \Big)\\
    &:= \sum_{s=1}^\infty P_s
\end{align*}
We write $\mathcal U_s = \mathcal U(\eta^*,2^{s}R)$. For each $s\ge 1$, conditional on the design points $(X_i)_{i=1}^N$, the process 
\[X_u =\frac 1{\sqrt N} \sum_{i=1}^N\eps_i (u(X_i)-u^*(X_i)),~~~~ u \in \mathcal U_s, \] is centered Gaussian with covariance metric 
\[d^2(u,u') = \frac 1N \sum_{i=1}^N \sigma_i^2 (u(X_i)-u'(X_i))^2 \le  \sigma^2 \| u-u' \|_{N}^2,\] with respect to which the set $\mathcal U_s$ has diameter
\[D_s:= \sup_{u,u'\in \mathcal U_s} d(u,u') \le 2\sigma  2^{s}R. \]
Dudley's inequality for suprema of sub-Gaussian processes (see, e.g.,~Theorems 2.3.6 and 2.3.7 in \cite{GN16} or Theorem 2.2.4 in \cite{VW96}) gives that for any $v\in \mathcal U_s$
\begin{align*}
    \E_0^N \big[ \sup_{u\in \mathcal U_s} \big| X_u\big| \big] &\le \E_0^N |X_{v}| +  4\sqrt 2 \int_{0}^{D_s/2} \sqrt {H(\mathcal U_s,\sigma\|\cdot\|_N,\rho)} d\rho \\
    &\le \sigma 2^s R + 4\sqrt 2 \int_0^{\sigma 2^s R}\sqrt {H(\mathcal U_s,\sigma\|\cdot\|_N,\rho)} d\rho\\
    &\le 4\sqrt 2 \Psi(2^sR) \le \frac{1}{96} \sqrt N 2^{2s}R^2,
\end{align*}
where we can ensure the last inequality by choosing the constant $C>0$ in the hypothesis \eqref{entropy-ineq} large enough. Using this and the Borell-Sudakov-Tsirelson inequality (see Theorem 2.5.8 in \cite{GN16}), noting that the one-dimensional variances are upper bounded by
$\sup_{u\in \mathcal U_s}\text{Var}(X_u)\le \sigma^2 2^{2s}R^2$,
\begin{align*}
    P_s &\le \P_0^N\Big(\sup_{u\in \mathcal U_s} \big| X_u\big|- \E_0^N\sup_{u\in \mathcal U_s} \big|X_u\big| \ge \frac{1}{96} \sqrt N 2^{2s}R^2 \Big) \le \exp \Big( -\frac{N2^{2s}R^2}{2\cdot 96^2\sigma^2}\Big).
\end{align*}
Summing the previous inequalities gives 
\begin{equation*}
    P_0^N(A_N)\le \sum_{s=1}^\infty P_s\le C\exp\big( - NR^2/C\sigma^2\Big),
\end{equation*}
where we also used that under the hypothesis (\ref{entropy-ineq}), it holds that $\sqrt N \delta_N^2\ge C\sigma \delta_N$ such that $NR^2/(2*96^2\sigma^2)$ can be bounded away from zero by some universal constant for any $R\ge \delta_N$. This proves \eqref{conc-random-design}.

To show \eqref{MSE-random-design} for $k=1$, we integrate the concentration inequality \eqref{conc-random-design}, as in Lemma 2.2 of \cite{V01}. Let us write $\hat \tau^2 = \tau^2((\hat \eta,\hat\theta),u_0)$. For $\delta_N$ satisfying \eqref{entropy-ineq}, let
\[t_N^*=2(\tau^2((\eta^*,\theta^*),u_0)+\delta_N^2).\]
Then
\begin{equation*}
    \begin{split}
        \E_0^N[\hat \tau^2] &\le t_N^* + \int_{t_N^*}^\infty \P_0^N(\hat\tau^2 \ge t) dt \le t_N^*+ C\int_{\delta_N}^\infty \exp\Big(-\frac{Nu^2}{C\sigma^2}\Big) du^2\\
        &\le t_N^* + \frac{C'\sigma^2}{N} \exp\Big(-\frac{N\delta_N^2}{C\sigma^2}\Big)\le t_N^* + \frac{C'\sigma^2}{N},
    \end{split}
\end{equation*}
which proves the claim.

We now show the claim for higher moments of $\hat\tau$. For integer $k\ge 2$, we have
\begin{equation}\label{2k-moment}
    \begin{split}
        \E_0^N[\hat \tau^{2k}] &\le (t_N^*)^k + \int_{(t_N^*)^k}^\infty \P_0^N(\hat\tau^2 \ge t^{1/k}) dt \le (t_N^*)^k+ C\int_{\delta_N^{2k}}^\infty \exp\Big(-\frac{Nt^{1/k}}{C\sigma^2}\Big) dt
    \end{split}
\end{equation}
Since $\Psi(R)\ge \sigma R$, the assumption $\sqrt{N}\delta_N^2 \ge C\Psi(\delta_N)$
implies $N\delta_N^2/(C\sigma^2)\ge 1$, so that Lemma~\ref{lem:k-moment-tail-bound}
applies with $\gamma=1/k$, $a=\delta_N^{2k}$ and $L=N/(C\sigma^2)$. Using that lemma, we obtain that
\begin{equation*}
    \begin{split}
    \int_{\delta_N^{2k}}^\infty \exp\Big(-\frac{Nt^{1/k}}{C\sigma^2}\Big) dt \lesssim \frac{\sigma^2\delta_N^{2k-2}}{N} \exp\Big(-\frac{N\delta_N^2}{C\sigma^2}\Big) \le \frac{\sigma^2\delta_N^{2k-2}}{N},
    \end{split}
\end{equation*}
as desired. \hfill \qed

\section{Two auxiliary lemmas}

\subsection{Concentration of the empirical norm}
\begin{lem}[Concentration of empirical norm]\label{lem-emp-norm}
Let $\mathcal O\subseteq \R^d$ be some open subset with finite Lebesgue measure, and let $X_i \sim^{i.i.d.}\text{Unif}(\mathcal O)$. Let $\mathcal U$ be a class of real-valued functions on $\mathcal O$ with uniform upper bound 
\[ \sup_{u\in\mathcal U} \|u\|_{L^\infty(\mathcal O)} =: M<\infty, \]
where $\|u\|_{L^\infty(\mathcal O)} = \sup_{x\in \mathcal O}|u(x)|$. There exists a universal constant $C>0$ such that for any $(\delta_N:N\ge 1)$ satisfying
\begin{equation}\label{emp-norm-condition}
    N\delta_N^2 \ge C M^2 H(\mathcal U,\|\cdot\|_\infty,\delta_N),
\end{equation}
and any $R\ge C(\delta_N + M/\sqrt N)$, it holds that
\[
\mathbb{P}\Big( \| u \|_{L^2(\mathcal O)} \ge 2 \|u\|_N ~~\text{for some}~~u\in\mathcal U~\text{with}~\|u\|_{L^2(\mathcal O)}\ge R\Big)
\le 2 \exp\left( -\frac{NR^2}{C M^2} \right).
\]
\end{lem}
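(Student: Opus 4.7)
The plan is to combine a peeling argument over $L^2$-shells with a discretization in $\|\cdot\|_\infty$ and Bernstein's inequality applied to the squared empirical process. Throughout, write $Z_u := \|u\|_{L^2(\mathcal O)}^2 - \|u\|_N^2 = \tfrac{1}{N}\sum_{i=1}^N\bigl(\E u^2(X_1)-u^2(X_i)\bigr)$. For $s\ge 1$, define the shell
\[
\mathcal U_s \;:=\; \{u\in\mathcal U : 2^{s-1}R \le \|u\|_{L^2(\mathcal O)} \le 2^{s}R\}.
\]
Whenever $u\in\mathcal U_s$ with $\|u\|_{L^2(\mathcal O)}\ge 2\|u\|_N$, one has $Z_u \ge \tfrac{3}{4}\|u\|_{L^2(\mathcal O)}^2 \ge \tfrac{3}{16}\,4^{s}R^2$. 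Hence the target probability is bounded by $\sum_{s\ge 1}\P\bigl(\sup_{u\in\mathcal U_s} Z_u \ge \tfrac{3}{16}\,4^{s}R^2\bigr)$.

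Next I would discretize. Let $\mathcal N$ be a $\delta_N$-net of $\mathcal U$ in $\|\cdot\|_\infty$ with $\log|\mathcal N| \le H(\mathcal U,\|\cdot\|_\infty,\delta_N)$. Using $|u^2(x)-v^2(x)| \le 2M\|u-v\|_\infty$ pointwise, we get $|Z_u - Z_v| \le 4M\|u-v\|_\infty$. Consequently, for every $u\in\mathcal U_s$ there exists $u^*\in\mathcal N$ with $Z_u \le Z_{u^*} + 4M\delta_N$, and provided $R\ge C'M\delta_N/R$ (which follows from $R\ge C(\delta_N + M/\sqrt N)$ combined with \eqref{emp-norm-condition} for a suitable $C$), the discretization error is absorbed into a constant fraction of the target: it is enough to bound $\P(Z_{u^*} \ge \tfrac{1}{8}\,4^{s}R^2)$ uniformly over $u^*\in\mathcal N$ and to union bound over $\mathcal N$.

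For each fixed $u^*$, I would apply Bernstein's inequality to the i.i.d.\ centered sum $Z_{u^*}$. The individual summands are bounded by $2M^2$ and have variance at most $\E u^{*4}(X_1) \le M^2\|u^*\|_{L^2}^2 \lesssim M^2\,4^{s}R^2$. Bernstein thus yields
\[
\P\bigl(Z_{u^*} \ge \tfrac{1}{8}\,4^{s}R^2\bigr)
\;\le\;
\exp\!\left(-\,\frac{N\,(4^{s}R^2)^2/64}{2\,M^2\,4^{s}R^2 + (4M^2/3)(4^{s}R^2/8)}\right)
\;\le\;
\exp\!\bigl(-c\,4^{s}NR^2/M^2\bigr),
\]
for a universal $c>0$. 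A union bound over $\mathcal N$ costs an additive $H(\mathcal U,\|\cdot\|_\infty,\delta_N)$ in the exponent, which is $\le NR^2/(C'M^2)$ thanks to the hypothesis \eqref{emp-norm-condition} and $R\ge \delta_N$, and hence is dominated by the exponent above once $C$ is chosen large enough. Summing the geometric series $\sum_{s\ge 1}\exp(-c\,4^{s}NR^2/M^2)$ gives the claimed bound $2\exp(-NR^2/(CM^2))$.

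The main technical obstacle is the bookkeeping of the two Bernstein regimes and of the discretization slack: one must simultaneously choose $R\gtrsim \delta_N$ (to keep the $\delta_N$-discretization negligible relative to the shell radius), $R\gtrsim M/\sqrt N$ (to stay in a regime where Bernstein's concentration exponent exceeds the log-covering number), and absorb the entropy term using \eqref{emp-norm-condition}. Picking the universal constant $C$ large enough makes all three requirements compatible with the conclusion.
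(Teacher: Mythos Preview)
The paper does not give its own proof here---it simply invokes Lemma~A.5 of \cite{reinhardt2024statistical}---so there is no in-house argument to compare against. Your peeling-plus-Bernstein strategy is the standard route, but there is a concrete gap in the discretization step.

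You assert that the deterministic slack $4M\delta_N$ from the single-scale $\|\cdot\|_\infty$-net is absorbed by the shell target, i.e.\ that $R^2\gtrsim M\delta_N$, and you claim this ``follows from $R\ge C(\delta_N+M/\sqrt N)$ combined with \eqref{emp-norm-condition}''. It does not. In the very Sobolev instance the paper uses right after the lemma, $M\asymp 1$, $\delta_N\asymp N^{-\beta/(2\beta+d)}$ (with $\beta=\alpha+1$), and $R\asymp \delta_N$; hence $R^2\asymp \delta_N^2\ll \delta_N\asymp M\delta_N$, so at $s=1$ the slack $4M\delta_N$ swamps the target $\tfrac{3}{16}\cdot 4R^2$. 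Neither $R\ge C\delta_N$ nor $R\ge CM/\sqrt N$ nor the entropy condition $N\delta_N^2\ge CM^2H$ yields $R^2\gtrsim M\delta_N$.

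The fix is to avoid the crude pointwise bound $|Z_u-Z_{u^*}|\le 4M\delta_N$ and instead exploit the \emph{shell-dependent variance}. Writing $u^2-(u^*)^2=(u-u^*)(u+u^*)$, one has $\|u^2-(u^*)^2\|_{L^2}\lesssim \delta_N\cdot 2^sR$ on $\mathcal U_s$, not merely $\le 2M\delta_N$. One clean way to use this is: bound $\E\sup_{u\in\mathcal U_s}Z_u$ via symmetrization and the contraction principle (since $t\mapsto t^2$ is $2M$-Lipschitz on $[-M,M]$) to reduce to the linear Rademacher process, then use that on a high-probability event the empirical radius of the net over $\mathcal U_s$ is $\lesssim 2^sR$ (itself a Bernstein-plus-union-bound step), giving $\E\sup Z_u\lesssim M\cdot 2^sR\sqrt{H/N}\lesssim 2^sR\,\delta_N\le 4^sR^2/C$; finally apply Talagrand's (or Bousquet's) inequality for the deviation, whose variance term is $\le M^2 4^sR^2$. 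This recovers exactly the exponent $\asymp N4^sR^2/M^2$ you want, without ever needing $R^2\gtrsim M\delta_N$.
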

\begin{proof}
    Lemma~\ref{lem-emp-norm} is a direct specialization of Lemma~A.5 in \cite{reinhardt2024statistical} to the case of uniform design $X_i\sim\mathrm{Unif}(\O)$ and the function class $\mathcal U$ with uniform envelope $M=\sup_{u\in\mathcal U}\|u\|_\infty$.
\end{proof}

For classes $\mathcal U$ which belong to a sufficiently regular Sobolev ball
\[\mathcal U\subseteq  \{u\in H^\alpha(\mathcal O):\|u\|_{H^\alpha}\le M\},~~~\alpha>d/2,\]
the preceding lemma implies the following. By virtue of the Sobolev embedding $\|\cdot\|_\infty\lesssim \|\cdot\|_{H^\alpha}$ and using the entropy bound $H(\mathcal U,\|\cdot\|_\infty,\delta_N)\lesssim (M/\delta_N)^{d/\alpha}$ (e.g. \cite[Thm.~4.10.3]{Triebel1978}), we see that the condition \eqref{emp-norm-condition} is fulfilled for 
\[\delta_N \gtrsim MN^{-\frac{\alpha}{2\alpha+d}}, \]
where $\gtrsim$ denotes a constant which is independent of $M,N$. It follows that for some constant $C'>0$, 
\[
\mathbb{P}\Big( \| u \|_{L^2(\mathcal O)} \ge 2 \|u\|_N ~~\text{for some}~~\|u\|_{H^\alpha} \le M,~~\|u\|_{L^2(\mathcal O)}\ge R \Big)
\le 2 \exp\left( -\frac{NR^2}{C' M^2} \right),
\]
with some constant $C'$.

\subsection{A basic tail inequality}

The following tail bound is used repeatedly in our proofs.
\begin{lem}\label{lem:k-moment-tail-bound}
For any $\gamma\in(0,1)$, there exists a constant $C_\gamma>0$ such that for all
$L,a>0$ with $La^\gamma\ge 1$,
\[
\int_a^\infty e^{-Lt^\gamma}\,dt
\le
C_\gamma \frac{a^{1-\gamma}}{L}\,e^{-La^\gamma}.
\]
\end{lem}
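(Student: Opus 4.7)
The plan is to reduce the integral to an incomplete gamma function via the substitution $v=Lt^\gamma$. This gives $t=(v/L)^{1/\gamma}$ and $dt=(\gamma L^{1/\gamma})^{-1}v^{1/\gamma-1}dv$, so that
\[
\int_a^\infty e^{-Lt^\gamma}\,dt \;=\; \frac{1}{\gamma L^{1/\gamma}}\int_{s_0}^\infty v^{1/\gamma-1}e^{-v}\,dv \;=\; \frac{1}{\gamma L^{1/\gamma}}\,\Gamma(1/\gamma,\,s_0),
\]
where $s_0:=La^\gamma\ge 1$ by hypothesis. Writing $\alpha:=1/\gamma>1$, the claim will follow from the uniform tail estimate
\[
\Gamma(\alpha,s)\;\le\;C_\alpha\, s^{\alpha-1}e^{-s}\qquad\text{for all }s\ge 1,\tag{$\ast$}
\]
because plugging back gives, using $(La^\gamma)^{\alpha-1}=L^{1/\gamma-1}a^{1-\gamma}$,
\[
\int_a^\infty e^{-Lt^\gamma}\,dt \;\le\; \frac{C_\alpha}{\gamma L^{1/\gamma}}\,(La^\gamma)^{\alpha-1}e^{-s_0}\;=\;\frac{C_\alpha}{\gamma}\,\frac{a^{1-\gamma}}{L}\,e^{-La^\gamma}.
\]

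To establish $(\ast)$, I would integrate by parts once, differentiating $v^{\alpha-1}$ and integrating $e^{-v}$:
\[
\Gamma(\alpha,s)\;=\;s^{\alpha-1}e^{-s}+(\alpha-1)\int_s^\infty v^{\alpha-2}e^{-v}\,dv.
\]
For $v\ge s\ge 1$ one has $v^{\alpha-2}\le s^{-1}v^{\alpha-1}$, so the remainder integral is bounded by $s^{-1}\Gamma(\alpha,s)$. Rearranging,
\[
\Gamma(\alpha,s)\Bigl(1-\tfrac{\alpha-1}{s}\Bigr)\;\le\;s^{\alpha-1}e^{-s},
\]
which yields $\Gamma(\alpha,s)\le 2\,s^{\alpha-1}e^{-s}$ as soon as $s\ge 2(\alpha-1)$.

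On the remaining compact range $s\in[1,2(\alpha-1)]$, both $\Gamma(\alpha,\cdot)$ and $s\mapsto s^{\alpha-1}e^{-s}$ are continuous and strictly positive, so their ratio is bounded by some $\alpha$-dependent constant; taking the larger of this and $2$ furnishes the desired $C_\alpha$. The only subtlety in the proof is ensuring the integration-by-parts step is not circular when $s<\alpha-1$, which is why the case split above is needed; I view this as the single mild obstacle, but there is no substantive difficulty.
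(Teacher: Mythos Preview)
Your proof is correct and follows essentially the same approach as the paper: the same substitution $v=Lt^\gamma$ reducing to the upper incomplete gamma function, followed by the tail bound $\Gamma(\alpha,s)\le C_\alpha s^{\alpha-1}e^{-s}$ for $s\ge 1$. The only difference is that the paper invokes this incomplete-gamma tail estimate as a known fact, whereas you supply a self-contained proof via integration by parts plus a compactness argument on the bounded range $[1,2(\alpha-1)]$; your handling of the case split is correct and the concern about circularity is appropriately addressed.
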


 \begin{proof}
 One sees this via substituting \(u = L t^\gamma\), so that
\[
t = \left(\frac{u}{L}\right)^{1/\gamma},
\qquad
dt = \frac{1}{\gamma} L^{-1/\gamma} u^{\frac{1}{\gamma}-1}\,du.
\]
Then
\[
\int_a^\infty e^{-L t^\gamma}\,dt
=
\frac{1}{\gamma} L^{-1/\gamma}
\int_{L a^\gamma}^\infty
u^{\frac{1}{\gamma}-1} e^{-u}\,du
=
\frac{1}{\gamma} L^{-1/\gamma}
\Gamma\!\left(\frac{1}{\gamma},\, L a^\gamma\right),
\]
where $\Gamma$ is the upper incomplete gamma function,
\[
\Gamma(s,z) = \int_z^\infty u^{s-1} e^{-u}\,du.
\]
For fixed \(s > 0\), the tail of $\Gamma$ satisfies
\[
\Gamma(s,z) \le C_s\, z^{s-1} e^{-z}
\qquad \text{for } z \ge 1.
\]
Applying this with \(s = \frac{1}{\gamma}\) and \(z = L a^\gamma\), one obtains 
\[
\int_a^\infty e^{-L t^\gamma}\,dt
\le
C_\gamma\, \frac{a^{1-\gamma}}{L}\, e^{-L a^\gamma},
\]
with \(C_\gamma > 0\) depending only on \(\gamma\).
 \end{proof}

\newpage

\end{document}